\documentclass[10pt]{amsart}

\usepackage{amsmath}
  \usepackage{paralist}
  \usepackage{graphics} 
  \usepackage{epsfig} 
\usepackage{graphicx}  \usepackage{epstopdf}

 \usepackage[colorlinks=true]{hyperref}
\hypersetup{urlcolor=blue, citecolor=red}

  \textheight=8.2 true in
   \textwidth=5.0 true in
    \topmargin 30pt
     \setcounter{page}{1}


\usepackage{tikz}
\usetikzlibrary{arrows, automata}
\usetikzlibrary{calc}
\usetikzlibrary{intersections}

\tikzset{
    right angle quadrant/.code={
        \pgfmathsetmacro\quadranta{{1,1,-1,-1}[#1-1]}     
        \pgfmathsetmacro\quadrantb{{1,-1,-1,1}[#1-1]}},
    right angle quadrant=1, 
    right angle length/.code={\def\rightanglelength{#1}},   
    right angle length=1ex, 
    right angle symbol/.style n args={3}{
        insert path={
            let \p0 = ($(#1)!(#3)!(#2)$) in     
                let \p1 = ($(\p0)!\quadranta*\rightanglelength!(#3)$), 
                \p2 = ($(\p0)!\quadrantb*\rightanglelength!(#2)$) in 
                let \p3 = ($(\p1)+(\p2)-(\p0)$) in  
            (\p1) -- (\p3) -- (\p2)
        }
    }
}

\newtheorem{theorem}{Theorem}[section]
\newtheorem{corollary}[theorem]{Corollary}

\newtheorem{lemma}[theorem]{Lemma}
\newtheorem{proposition}[theorem]{Proposition}
\newtheorem{conjecture}[theorem]{Conjecture}

\theoremstyle{definition}
\newtheorem{definition}[theorem]{Definition}
\newtheorem{remark}[theorem]{Remark}

\title[Running heading with forty characters or less]
      {On the ergodicity of geodesic flows on surfaces without focal points}

\author[first-name1 last-name1 and first-name2 last-name2]{Weisheng Wu, Fei Liu and Fang Wang}

\subjclass{}
 \keywords{Ergodicity, Geodesic flow, No focal points, Nonuniform hyperbolicity. MR(2010) Subject Classification: 37D40, 37A25, 37D25.}


\address{Department of Applied Mathematics, College of Science, China Agricultural University, Beijing, 100083, China}
 \email{wuweisheng@cau.edu.cn}

\address{College of Mathematics and System Science, Shandong University of Science and Technology, Qingdao, 266590, China.}
\email{feiliu.sdust$@$gmail.com}

\address{School of Mathematical Sciences, Capital Normal University, Beijing, 100048, China; and Beijing Center for Mathematics and Information Interdisciplinary Sciences (BCMIIS), Beijing 100048, China.}
\email{fangwang@cnu.edu.cn}


\begin{document}
\maketitle

\markboth{Ergodicity of geodesic flows}
{Ergodicity of geodesic flows}
\renewcommand{\sectionmark}[1]{}

\begin{abstract}
In this article, we study the ergodicity of the geodesic flows on surfaces with no focal points. Let $M$ be a smooth connected and closed surface equipped with a $C^\infty$ Riemannian metric $g$, whose genus $\mathfrak{g} \geq 2$. Suppose that $(M,g)$ has no focal points. We prove that the geodesic flow on the unit tangent bundle of $M$ is ergodic with respect to the Liouville measure, under the assumption that the set of points on $M$ with negative curvature has at most finitely many connected components.
\end{abstract}

\maketitle
\section{Introduction}\label{section 1}
Assume that $(M,g)$ is a smooth, connected and closed manifold equipped with a $C^\infty$ Riemannian metric $g$. The geodesic flow $g^t$, generated by the Riemannian metric $g$, is defined on the unit tangent bundle $SM$ by the formula:
\[g^t(v)=\gamma'_v(t),\]
where $\gamma'_v(t)$ is the unit vector tangent to the geodesic $\gamma_v(t)$ uniquely determined by the initial vector $v\in SM$. In this paper, we study the ergodicity of the geodesic flow with respect to the Liouville measure $\nu$ on $SM$, where $(M,g)$ is assumed to be a surface having no focal points.\\

Our work was originally inspired by the classical results on the ergodicity of the geodesic flows on Riemannian manifolds with nonpositive curvature. The geodesic flows on Riemannian manifolds with negative or nonpositive curvature have very rich dynamics and broad applications. In the last century, this class of geodesic flows have always been attracting great interests of the mathematicians in dynamical systems and related areas. A lot of beautiful results on the dynamics of the geodesic flows have been exhibited. Among which, the ergodic properties, such as the ergodicity and the mixing properties, the measure of maximal entropy etc, have their special importance and receive extensive attentions. The statistical properties of geodesic flows on surfaces with negative curvature were first studied by Hadamard and Morse in the beginning of the twentieth century. Hopf (cf. \cite{Ho0,Ho1}) proved the ergodicity of the geodesic flow with respect to the Liouville measure $\nu$ on $SM$ for compact surfaces of variable negative curvature and for compact manifolds of constant negative sectional curvature in any dimension. The general case for compact manifolds of variable negative curvature was established by Anosov and Sinai (cf. \cite{An, AnS}). The geodesic flows on compact manifolds of negative curvature is a primary example of the \emph{uniformly hyperbolic flows} (or Anosov flows). Its ergodicity was established based on the classical Hopf argument and results in hyperbolic geometry (cf., for example the appendix in \cite{BB}). \\

Geodesic flows on manifolds of nonpositive curvature have also been intensively studied since 1970's. However, even for surfaces of nonpositive curvature, the geodesic flows present certain \emph{non-uniformly hyperbolic} behaviors. The ergodicity for the geodesic flows faces a great challenge due to the existence of ``flat'' geodesics. Consider a closed surface $M$ of genus $\mathfrak{g} \geq 2$ and of nonpositive curvature. Let
\begin{equation*}\label{e:flat}
\begin{aligned}
\Lambda:=\{v\in SM: K(\gamma_v(t))\equiv 0, \ \forall t\in \mathbb{R}\},
\end{aligned}
\end{equation*}
where $K$ denotes the curvature of the point. We call $\gamma_v$ a \emph{flat geodesic} if $v\in \Lambda$, i.e., $\gamma_v$ is a flat geodesic if the curvature along it is constantly $0$. People still do not know if $\Lambda$ is small in measure ($\nu(\Lambda)=0$ or not), in general. However, from the dynamical point of view, $\Lambda$ should be a very small set. For example, in \cite{Kn}, Knieper showed the strict inequality for the geodesic flows on rank $1$ manifolds of nonpositive curvature:
$$h(g^1|_{\Lambda})<h(g^1),$$
where $g^1$ is the time one map of the geodesic flow $g^t$, $h$ denotes the topological entropy, and $\Lambda$ denotes the irregular set of the geodesic flow, which is a counterpart of the above defined set in arbitrary dimensions. This means that the geodesic flow restricted on $\Lambda$ has less complexity than the whole geodesic flow. Burns and Gelfert proved that on rank $1$ surfaces of nonpositive curvature, the geodesic flow on $\Lambda$ has zero topological entropy (cf.~\cite{BG}). In higher dimensions, it is possible to have positive entropy on $\Lambda$; an example was given by Gromov (cf. \cite{Gro}).\\

For geodesic flows on rank $1$ surfaces of nonpositive curvature, the orbits inside $\Lambda$ are also believed to have simple behavior. In all the known examples, all the orbits in $\Lambda$ are closed. In a recent survey, Burns asks the question: Does there exist a non-closed flat geodesic? (cf. Question 6.2.1 in \cite{BM}). In this paper, we will show that all flat geodesics are closed on surfaces without focal points, under our assumption. Nevertheless, the most important topic on the set $\Lambda$ is still its Liouville measure (how small it is). People expect that on surfaces with nonpositive curvature, $\Lambda$ should have $0$ Liouville measure (this leads to the ergodicity of the geodesic flows, see \cite{BP}). This is the following well-known conjecture on the ergodicity for geodesic flows on surfaces with nonpositive curvature\footnote{Some experts in the area expect a negative answer to the conjecture. Our results in the paper support the conjecture under an additional assumption.}.
\vspace{.1cm}

\begin{conjecture}[Cf. \cite{RH}]\label{conjecture}
Let $(M,g)$ be a smooth, connected and closed surface of genus $\mathfrak{g}\geq 2$, which has nonpositive curvature. Then all flat geodesics are closed and there are only finitely many homotopy classes of such geodesics. In particular, $\nu(\Lambda)=0$, and hence the geodesic flow on $SM$ is ergodic.
\end{conjecture}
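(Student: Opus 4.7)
The plan is to carry out the Hopf argument in the nonuniformly hyperbolic setting and reduce the ergodicity of $g^t$ to the assertion that $\nu(\Lambda)=0$. Since nonpositive curvature implies no focal points, the stable and unstable horospherical foliations $W^s, W^u$ are well defined on $SM$, and their absolute continuity in this setting is classical (Pesin, Eberlein). On the complement of $\Lambda$ the Lyapunov exponents of $g^t$ transverse to the flow are nonzero, so Pesin theory identifies $W^s, W^u$ with genuine Lyapunov stable and unstable manifolds. Combined with absolute continuity, the Hopf chain argument then yields ergodicity on $SM\setminus\Lambda$, and the entire problem is thereby reduced to proving that $\Lambda$ has zero Liouville measure.

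To prove $\nu(\Lambda)=0$ I would aim at the stronger geometric statement that every flat geodesic is closed. Let $N:=\{p\in M: K(p)<0\}$ and $F:=M\setminus N$; every flat geodesic lies in $F$. The no focal points condition provides the flat strip theorem: two geodesics at bounded distance bound a totally geodesic flat strip. Given a flat geodesic $\gamma_v$, choose a return sequence $t_n\to\infty$ with $g^{t_n}v\to v$, lift to the universal cover $\widetilde M$, and apply the flat strip theorem to $\gamma_v$ and $g^{t_n}\gamma_v$ to extract a maximal flat strip containing $\gamma_v$. A compactness argument on the widths, together with the topological fact that a surface of genus $\mathfrak{g}\geq 2$ cannot contain arbitrarily large totally flat subdomains, should force this strip to descend to a closed flat cylinder in $M$, so that $\gamma_v$ is closed. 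Finiteness of homotopy classes then follows from uniform bounds on widths and lengths, and $\Lambda$ becomes a finite union of two-dimensional flat cylinders inside the three-dimensional $SM$, giving $\nu(\Lambda)=0$.

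The main obstacle is precisely the step of ruling out non-closed flat geodesics. The flat strip theorem controls only asymptotic pairs, and for a non-closed recurrent flat geodesic the returns $g^{t_n}v$ do not automatically yield strips of uniformly bounded width, while strips coming from different returns may overlap in intricate ways. This is exactly where the paper's auxiliary hypothesis, that $N$ has only finitely many connected components, becomes indispensable: it bounds the combinatorial complexity of $F$ and forces the flat strips to organize into finitely many closed flat cylinders, after which the above scheme can be completed. Without such an assumption I do not see how to close the geometric step by purely Riemannian arguments, which is precisely why the statement remains open and is posed here as a conjecture rather than a theorem.
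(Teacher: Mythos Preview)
The statement you are asked to address is explicitly labeled a \emph{conjecture} in the paper; there is no proof of it there, and indeed the paper states that it remains open in general. Your final paragraph correctly recognizes this, so in that sense your proposal is honest and appropriately calibrated. There is nothing in the paper to compare a full proof against.

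That said, the sketch in your second paragraph contains a genuine gap that is worth flagging, because it is precisely where the paper's partial results do real work. You write: choose a return sequence $g^{t_n}v\to v$, lift to $\widetilde M$, and ``apply the flat strip theorem to $\gamma_v$ and $g^{t_n}\gamma_v$ to extract a maximal flat strip containing $\gamma_v$.'' But the flat strip lemma requires the two geodesics to stay at bounded distance for \emph{all} time, whereas recurrence only gives closeness at the single times $t_n$. The two lifts in $\widetilde M$ differ by a deck transformation, and there is no a priori reason their distance stays bounded; if it did, $\gamma_v$ would already be an axis and hence closed, which is what you are trying to prove. So this step begs the question.

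The paper's route to the partial result (under the finitely-many-negative-components hypothesis) circumvents this by separating the issues: first it shows directly, by a Cao--Xavier type area-growth argument, that every flat \emph{strip} consists of closed geodesics; then it shows that any vector not tangent to a flat strip has the expansivity property, so that close returns \emph{must} eventually separate; finally it uses the infinite-area-of-ideal-triangles phenomenon and the self-intersection structure of non-closed flat geodesics to derive a contradiction from the existence of a non-closed flat orbit. Recurrence alone never produces a flat strip in this argument; rather, the \emph{failure} of expansivity does. Your outline conflates these two mechanisms.
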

\vspace{.1cm}

We declare that the terminology ``ergodicity" in this paper means the ergodicity with respect the Liouville measure $\nu$ on $SM$. The problem we are considering in this paper is the ergodicity of the geodesic flows on surfaces without focal points. First of all, we give the definition of the focal points.
\vspace{.1cm}

\begin{definition}
Let $(M,g)$ be a Riemannian manifold and $\gamma$ a geodesic on $M$. Points $q=\gamma(t_{0})$ and $p=\gamma(t_{1})$ are called \emph{focal} if there exists a Jacobi field $J$ along $\gamma$ such that $J(t_{0})=0$, $J'(t_{0})\neq 0$ and $\frac{d}{dt}\| J(t)\|^{2}\mid_{t=t_{1}}=0$. The Riemannian manifold $(M,g)$ is said to be \emph{without focal points} if there is no focal points on any geodesic of $M$.
\end{definition}
\vspace{.1cm}

It is not hard to see that the manifolds with nonpositive curvature have no focal points. If $M$ is a surface of genus $1$ and has no focal points, then it must be a flat torus (cf.  \cite{Ho,BI}). Therefore, the geodesic flow on $M$ is obviously not ergodic. However, if $M$ has higher genus, the curvature is allowed to vary. In this paper, we always assume that the surface $M$ we are considering has genus greater than $1$.\\

In 1970's, by using his theory of nonuniform hyperbolicity, Pesin obtained a celebrated result on the ergodicity of the geodesic flows on manifolds without focal points, which satisfy the Uniform Visibility Axiom (cf. Theorem 12.2.12 in \cite{BP}). We are not going to give the explicit definition of the Uniform Visibility Axiom here, but remark that it is satisfied by every closed surface of genus $\mathfrak{g}\geq 1$. To state Pesin's result for surfaces without focal points, we define the sets:
\[\Delta^+=\{v\in SM: \chi(v,\xi)<0 \text{\ for any\ } \xi\in E^+(v)\},\]
\[\Delta^-=\{v\in SM: \chi(v,\xi)>0 \text{\ for any\ } \xi\in E^-(v)\}, \]
\[\Delta=\Delta^+\cap \Delta^-,\]
where $\chi$ denotes the Lyapunov exponents and $E^{\pm}$ denotes the stable and unstable distributions on $SM$ with respect to the geodesic flow respectively. $\Delta$ is called the \emph{regular set} with respect to the geodesic flow. For details, see Section \ref{section 2} below. In \cite{BP}, Pesin proved the following theorem:
\vspace{.1cm}

\begin{theorem}[Pesin, cf. \cite{BP}]\label{pesin} For the geodesic flow on a surface without focal points, we have that
$\nu(\Delta)>0$, and $g^t|_\Delta$ is ergodic.
\end{theorem}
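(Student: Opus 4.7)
The plan is to combine Pesin's theory of nonuniform hyperbolicity with the classical Hopf argument. The proof naturally splits into two parts: first, establishing $\nu(\Delta)>0$; second, upgrading the hyperbolic structure on $\Delta$ to ergodicity via absolute continuity of the invariant foliations.

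For the positivity of $\nu(\Delta)$, I would exploit Gauss--Bonnet: since $\mathfrak{g}\geq 2$, we have $\int_M K\,dA<0$, so there is an open set $U\subset M$ on which $K<0$. Along any geodesic $\gamma_v$, the (orthogonal) stable Jacobi field $J^s$ is obtained as a limit of finite-time boundary Jacobi fields, and $u:=(J^s)'/J^s$ satisfies the Riccati equation
\begin{equation*}
u' + u^{2} + K(\gamma_v(t)) = 0.
\end{equation*}
The no-focal-points condition forces $\|J^s(t)\|$ to be non-increasing, hence $u\leq 0$, and the stable Lyapunov exponent equals the time average of $u$. A standard Riccati comparison argument, using that $K$ is strictly negative on the open set $\pi^{-1}(U)\subset SM$, shows that whenever the orbit of $v$ visits $\pi^{-1}(U)$ with positive Birkhoff frequency, the value of $u$ is bounded away from $0$ on a set of positive density, so the stable exponent is strictly negative. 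Applying the Birkhoff ergodic theorem to $\mathbf{1}_{\pi^{-1}(U)}$ with respect to the invariant probability measure $\nu$, we obtain a $\nu$-positive set of $v$ with this property; symmetrically one gets strict positivity of the unstable exponent. Hence $\nu(\Delta^+)>0$, $\nu(\Delta^-)>0$, and by a further Poincar\'e recurrence/time-reversal argument $\nu(\Delta)>0$.

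For the ergodicity of $g^t|_\Delta$, I would invoke the nonuniform hyperbolic Hopf argument. At $\nu$-a.e.\ $v\in\Delta$, Pesin's stable manifold theorem provides local $C^{1+\alpha}$ stable and unstable manifolds $W^s_{\mathrm{loc}}(v),W^u_{\mathrm{loc}}(v)$ tangent to $E^\pm(v)$, organized into measurable laminations on each Pesin block where the hyperbolic estimates are uniform. Pesin's absolute continuity theorem then yields that the holonomy maps of both foliations preserve Lebesgue null sets. The Hopf argument gives that for any continuous $\varphi:SM\to\mathbb R$, the forward (resp.\ backward) time averages are constant along $W^s$-leaves (resp.\ $W^u$-leaves) $\nu$-a.e.\ on $\Delta$; combining the two sides via the absolutely continuous local product structure shows that the Birkhoff averages of $\varphi$ are locally constant $\nu$-a.e., so the ergodic components of $\nu|_\Delta$ are open modulo null sets and in particular at most countably many. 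Finally, since closed surfaces of higher genus satisfy the Uniform Visibility Axiom, the geodesic flow is topologically transitive and stable/unstable manifolds are dense in $SM$, which forces a single ergodic component and yields ergodicity on $\Delta$.

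The main obstacle is the absolute continuity step. One must verify the Pesin-type hypotheses in the no-focal-points setting: establish that stable/unstable Jacobi fields (equivalently, the distributions $E^\pm$) vary measurably on $SM$ and sufficiently smoothly on Pesin blocks to apply the holonomy estimates. This is delicate because the hyperbolicity is only nonuniform, and vectors approaching the complement $SM\setminus\Delta$ spoil the uniformity; the argument therefore requires a careful Lyapunov chart analysis, relying on the $C^\infty$ smoothness of $g$ and on the no-focal-points geometry to control the geometry of horospheres transverse to the flow.
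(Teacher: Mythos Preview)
The paper does not actually prove this statement. Theorem~\ref{pesin} is quoted as Pesin's result, with the citation ``cf.~\cite{BP}'' (Theorem~12.2.12 there), and the paper immediately moves on to state and prove its own Theorem~\ref{regular}. So there is no ``paper's own proof'' to compare against; the authors treat this as a black box imported from the literature.

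That said, your sketch is a faithful outline of how the result is established in \cite{BP}, and in fact your argument for $\nu(\Delta)>0$ is essentially the same Riccati/recurrence mechanism that the paper deploys in Section~\ref{section 3} to prove the stronger Theorem~\ref{regular} (see Proposition~\ref{lyapunov}, Lemma~\ref{uniformrec}, and Lemma~\ref{negative}). The paper's version is sharper: rather than just producing a positive-measure set in $\Delta$, it shows that $\nu$-a.e.\ uniformly recurrent vector in $\Lambda^c$ already lies in $\Delta$, via the same ``positive frequency of return to a negatively curved neighborhood forces $u$ bounded away from $0$'' argument you describe. Your second half (Pesin stable manifold theorem, absolute continuity, Hopf argument, then visibility/transitivity to kill multiple ergodic components) is the standard route and is exactly what \cite{BP} does; the ``main obstacle'' you flag---verifying absolute continuity in the no-focal-points setting---is handled in \cite{BP} by the general Pesin theory once one has the $dg^t$-invariant measurable splitting with nonzero exponents, and does not require anything specific to focal points beyond the properties in Proposition~\ref{subspace}.
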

\vspace{.1cm}

Our first result in this paper is the following relation between the regular set $\Delta$ and the set $\Lambda$ of unit vectors tangent to flat geodesics. We remark that all our results are established under \emph{the assumption of no focal points}, so sometimes we omit the statement of this assumption in the following theorems.
\vspace{.1cm}

\begin{theorem}\label{regular}
$\nu(\Lambda^c \setminus \Delta)=0$.
\end{theorem}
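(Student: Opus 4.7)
The plan is to combine the Riccati equation on orbits with the ergodic decomposition of $\nu$ and the continuity of the stable/unstable Riccati solutions, so that a vanishing Lyapunov exponent on the orbit of a generic point forces the whole supporting ergodic component to lie in $\Lambda$.

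I would first recall (or establish in a preparatory section) that under the no-focal-points hypothesis there exist continuous functions $u^s,u^u:SM\to\mathbb{R}$ with $u^s\le 0\le u^u$, and such that along every orbit $\{g^tv\}$ each $u^{\bullet}$ solves the Riccati equation
\[
\frac{d}{dt}u^{\bullet}(g^tv)=-u^{\bullet}(g^tv)^2-K(\gamma_v(t)),\qquad \bullet\in\{s,u\}.
\]
In particular, the Lyapunov exponents $\chi^s(v)$, $\chi^u(v)$ are realised as Birkhoff time averages of the bounded continuous functions $u^s$ and $u^u$. A direct reading of the Riccati equation gives the pointwise characterisation
\[
v\in\Lambda\;\Longleftrightarrow\;u^s(g^tv)\equiv 0\ \text{for all } t\;\Longleftrightarrow\;u^u(g^tv)\equiv 0\ \text{for all } t,
\]
since on a flat geodesic the Jacobi equation collapses to $J''=0$ (so the stable/unstable selections are constant), and conversely feeding $u^{\bullet}\equiv 0$ into Riccati forces $K\circ\gamma_v\equiv 0$.

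The core of the argument is then a short dichotomy. Take $v\in\Lambda^c$ Birkhoff generic---a full $\nu$-measure condition---so that the ergodic decomposition of $\nu$ provides an ergodic component $\mu_v$ with $v\in\operatorname{supp}(\mu_v)$ and $\chi^s(v)=\int u^s\,d\mu_v$. Assume $\chi^s(v)=0$ for contradiction. Since $u^s\le 0$ everywhere, the integral identity forces $u^s=0$ $\mu_v$-a.e.; by continuity of $u^s$ this upgrades to $u^s\equiv 0$ on $\operatorname{supp}(\mu_v)$, and the flow-invariance of the support then propagates this to $u^s(g^tw)=0$ for every $w\in\operatorname{supp}(\mu_v)$ and every $t\in\mathbb{R}$. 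The characterisation above yields $\operatorname{supp}(\mu_v)\subseteq\Lambda$, and in particular $v\in\Lambda$, contradicting our choice. Thus $\chi^s(v)<0$ for $\nu$-a.e.\ $v\in\Lambda^c$, giving $\nu(\Lambda^c\setminus\Delta^+)=0$. The symmetric argument using $u^u\ge 0$ furnishes $\nu(\Lambda^c\setminus\Delta^-)=0$, and combining these yields $\nu(\Lambda^c\setminus\Delta)=0$.

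The main obstacle is the preliminary step rather than the ergodic-theoretic dichotomy: one must verify that the Riccati functions $u^{s,u}$ are defined on all of $SM$, continuous, and sign-constrained $u^s\le 0\le u^u$ from the sole no-focal-points hypothesis (rather than nonpositive curvature). These are classical in the nonpositively curved category and are available---though more delicately---under the no-focal-points assumption via the theory of horospherical foliations (Eschenburg, O'Sullivan, Eberlein); I expect the authors to have assembled the required statements in a dedicated preliminary section. A secondary point is that $K$ is permitted to be positive at some points of $M$, but this does not disturb the argument: the sign relations $u^s\le 0\le u^u$ and the Riccati characterisation of $\Lambda$ are insensitive to it.
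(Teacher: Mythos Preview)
Your proposal is correct, and the preliminary inputs you flag (continuity of $u^{s,u}$ and the sign constraints $u^s\le 0\le u^u$ under the no-focal-points hypothesis) are indeed assembled in the paper via Proposition~\ref{subspace}, so that step is available. The route, however, differs from the paper's. Both arguments rest on a full-measure recurrence property---your condition ``$v\in\operatorname{supp}(\mu_v)$ for $\nu$-a.e.\ $v$'' is precisely the uniform recurrence established in Lemma~\ref{uniformrec}---but they diverge thereafter. The paper does not invoke the ergodic decomposition; instead it works directly along the orbit of a uniformly recurrent $v\in\Lambda^c\cap(\Delta^+)^c$. An auxiliary Lemma~\ref{negative} shows that a point of negative curvature recurring with positive frequency would force $u\le -c<0$ on a set of positive time-density and hence $\chi(v,\xi)<0$; consequently $K(\gamma_v(t))\ge 0$ for all $t$. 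Then $u'=-u^2-K\le 0$ together with $u\le 0$ and the a~priori bound $|u|\le a$ forces $u\equiv 0$ via a short trichotomy on $\lim_{t\to\infty}u(t)$, and Riccati yields $K\equiv 0$. Your argument bypasses both Lemma~\ref{negative} and the intermediate conclusion $K\ge 0$: from $\int u^s\,d\mu_v=0$ and $u^s\le 0$ you obtain $u^s=0$ on $\operatorname{supp}(\mu_v)\ni v$ in one stroke, and Riccati finishes. This is shorter and more conceptual; the paper's hands-on analysis, by contrast, stays entirely on a single orbit and avoids appealing to the decomposition machinery.
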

\vspace{.1cm}

By Theorems \ref{pesin} and \ref{regular}, if $\nu(\Lambda)=0$, the regular set $\Delta\subset SM$ is a full measure set, and then the geodesic flow is ergodic on $SM$. The condition $\nu(\Lambda)=0$ holds in all the known examples so far. However, it is still not proved, even for the surfaces of nonpositive curvature. A recent progress on this problem was made by the first author Wu in \cite{Wu}. We conclude the main result of \cite{Wu} in the following theorem:
\vspace{.1cm}

\begin{theorem}[Cf. \cite{Wu}]\label{wu}
Let $(M,g)$ be a smooth, connected and closed surface of genus $\mathfrak{g}\geq 2$, which has nonpositive curvature. Suppose that the set $\{p\in M: K(p)<0\}$ has finitely many connected components, then $\nu(\Lambda)=0$. In particular, the geodesic flow is ergodic.
\end{theorem}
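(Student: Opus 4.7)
The plan is to prove the structural content of Conjecture \ref{conjecture} in this setting: every flat geodesic is closed and only finitely many free homotopy classes of flat closed geodesics arise. Any finite union of closed $g^t$-orbits together with bounding invariant cylinders is a null set for the Liouville measure on $SM$, so this conclusion forces $\nu(\Lambda)=0$. Combined with Theorem \ref{regular} it gives $\nu(\Delta)=1$, and ergodicity then follows from Theorem \ref{pesin}.

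Let $U_1,\ldots,U_N$ be the finitely many connected components of $\{p\in M:K(p)<0\}$, and set $\mathcal{F}:=M\setminus\bigcup_i U_i$, the closed flat region. Every flat geodesic is contained in a single connected component $F$ of $\mathcal{F}$, since crossing a $U_i$ would immediately violate flatness. I would then analyze each such $F$ as a compact flat surface with boundary via the Gauss--Bonnet formula $2\pi\chi(F)=\int_F K\,dA+\int_{\partial F}k_g\,ds+\sum_j(\pi-\alpha_j)$. The first term vanishes because $K\equiv 0$ on $F$. The core structural step is a rigidity lemma: $\partial F$ is a disjoint union of smooth closed geodesics of $M$, with no corners. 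Granting this, Gauss--Bonnet forces $\chi(F)=0$, and since $\mathfrak{g}\geq 2$ (which rules out the closed torus case), $F$ must be a flat annulus. The only geodesics of $M$ contained in a flat annulus are the closed circles parallel to the core together with the two boundary geodesics, so every flat geodesic is closed and each annulus contributes exactly one free homotopy class. Since $\mathcal{F}$ has only finitely many components, finitely many such homotopy classes arise in total.

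The main obstacle I foresee is the rigidity lemma that each boundary component of $\mathcal{F}$ is a smooth closed geodesic. Near a point $p\in\partial F$, a nonzero geodesic curvature of $\partial F$ in $M$, combined with strict $K<0$ just across in the adjacent $U_i$, should via Jacobi field comparison produce a nearby geodesic that bends into $U_i$; applied to a candidate flat geodesic tangent to $\partial F$ at $p$ this would contradict $\gamma\subset F$, forcing $k_g\equiv 0$ along $\partial F$. A companion argument using the flat strip theorem in the universal cover $\widetilde{M}$, together with the finiteness of the $U_i$, is needed both to rule out corners on $\partial F$ and to show that any flat geodesic in the interior of an annulus component of $\mathcal{F}$ is parallel to the boundary rather than crossing it transversally. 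With these pieces in place, $\Lambda$ is contained in a finite union of two-dimensional $g^t$-invariant cylinders and finitely many boundary periodic orbits in $SM$, all of Liouville measure zero, so $\nu(\Lambda)=0$.
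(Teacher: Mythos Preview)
Your approach via Gauss--Bonnet on the connected components of $\{K=0\}$ is quite different from the argument in \cite{Wu} (reproduced and extended in this paper), which is dynamical: assuming a non-closed flat geodesic exists, an expansivity argument (Theorems \ref{A} and \ref{B}) produces distinct flat orbits $y,z$ with $d(g^t y,g^t z)\to 0$; the resulting ideal triangle in $\widetilde{M}$ is then sliced by the self-intersections of $\gamma_y$ (Corollary \ref{selfintcor}) into regions whose inscribed radii tend to zero, incompatible with the finitely many negative-curvature components having inscribed radii bounded below. No structural classification of the flat set is ever attempted.

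The genuine gap in your proposal is the rigidity lemma. The boundary of a component $F$ of $\mathcal{F}=\{K=0\}$ is just a piece of the zero set of the smooth function $K$; it need not be a finite union of smooth curves, let alone of geodesics. A flat round disk glued into a negatively curved background already gives a component whose boundary circle has nonzero geodesic curvature. Your sketch for $k_g\equiv 0$ presupposes a complete flat geodesic tangent to $\partial F$ at the point $p$ under consideration, but no such geodesic need exist: a component of $\mathcal{F}$ may contain no complete geodesics at all (any simply connected one, by developing into $\mathbb{R}^2$), and even when it does they may sit well inside $F$ and never meet $\partial F$. Without the rigidity lemma, Gauss--Bonnet gives no control on $\chi(F)$ and the annulus conclusion collapses. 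Restricting instead to the maximal flat strip around a given flat geodesic --- whose boundary \emph{is} geodesic --- does not help either: that says nothing about a non-closed flat geodesic that bounds no flat strip, and ruling those out is precisely the hard case.
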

\vspace{.1cm}

In this paper, we generalize Theorem \ref{wu} from the setting of surfaces with nonpositive curvature to surfaces without focal points. This means that we are going to prove the ergodicity of the geodesic flows on surfaces which can have positive curvature in a subset. To achieve this goal, we explore the properties of flat geodesics, which are also of independent interest. Among them is the following important result. Here, we let $\text{Per}(g^t)$ denote the set of periodic points of the geodesic flow, and $\mathcal{O}(z)$ denote the orbit of $z$ under the geodesic flow. The following theorem says that non-closed flat orbits can accumulate only on non-closed flat orbits.
\vspace{.1cm}

\begin{theorem}\label{nonclosedset}
$\Lambda \cap (\text{Per\ } (g^t))^c$ is a closed subset of $SM$.
\end{theorem}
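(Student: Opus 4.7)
First I would show that $\Lambda$ is closed in $SM$: if $v_n \in \Lambda$ and $v_n \to v$, then continuous dependence of geodesics on initial conditions gives $\gamma_{v_n}(t) \to \gamma_v(t)$ for each $t$, so continuity of $K$ yields $K(\gamma_v(t)) = 0$ for all $t$, i.e., $v \in \Lambda$. It therefore suffices to prove that $\Lambda \cap \text{Per}(g^t)$ is open in $\Lambda$, i.e., no non-periodic flat vector can accumulate on a periodic flat vector.

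Arguing by contradiction, suppose $v \in \Lambda$ is periodic with period $T > 0$ and there exist $v_n \in \Lambda \setminus \text{Per}(g^t)$ with $v_n \to v$. Passing to the universal cover $\tilde{M}$, fix a lift $\tilde{v}$ of $v$ and let $\phi$ be the deck transformation whose axis is $\tilde{\gamma}_{\tilde{v}}$ and which translates this axis by $T$. Choose lifts $\tilde{v}_n \to \tilde{v}$. My plan is to show that for $n$ large, $\tilde{\gamma}_{\tilde{v}_n}$ lies in the maximal flat strip $F \subset \tilde{M}$ containing $\tilde{\gamma}_{\tilde{v}}$ and is parallel to $\tilde{\gamma}_{\tilde{v}}$ inside $F$. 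Once this is done, $\phi$ acts on $F$ as translation by $T$ in the axial direction, so $g^T(\tilde{v}_n) = d\phi(\tilde{v}_n)$; projecting to $SM$ yields $g^T(v_n) = v_n$, contradicting $v_n \notin \text{Per}(g^t)$.

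The main obstacle is establishing this local parallelism. On a surface without focal points, the flat strip theorem identifies $F$ as a totally geodesic isometric embedding of a Euclidean strip $[0,a]\times\mathbb{R}$ preserved by $\phi$ acting as $(x,y)\mapsto(x,y+T)$. For a small enough neighborhood $\tilde{U}$ of $\tilde{v}$ in $S\tilde{M}$, the footpoints of vectors in $\tilde{U}$ lie in the interior of $F$ and the direction of $\tilde{v}_n$ is close to the axial direction of $F$. Inside $F$ any non-axial direction exits $F$ in finite time, and in an axial direction the geodesic is a vertical line. The subtle point is to rule out $\tilde{\gamma}_{\tilde{v}_n}$ exiting $F$ across $\partial F$ into another flat region of $\tilde{M}$ while staying in $\tilde{\Lambda}$. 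I plan to handle this by combining the maximality of $F$ with the $\phi$-invariance of the picture and the compactness of $M$: a Jacobi-field analysis along $\tilde{\gamma}_{\tilde{v}_n}$ (where $K\equiv 0$ forces all Jacobi fields to be affine) should show that the distance from $\tilde{\gamma}_{\tilde{v}_n}$ to $\tilde{\gamma}_{\tilde{v}}$ stays uniformly bounded on all of $\mathbb{R}$, making $\tilde{\gamma}_{\tilde{v}_n}$ biasymptotic to $\tilde{\gamma}_{\tilde{v}}$. The flat strip theorem then places $\tilde{\gamma}_{\tilde{v}_n}$ inside $F$ and parallel to $\tilde{\gamma}_{\tilde{v}}$, completing the argument.
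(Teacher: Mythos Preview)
Your reduction to showing that $\Lambda\cap\text{Per}(g^t)$ is open in $\Lambda$ is fine, but the heart of your plan has a genuine gap. The claim that ``a Jacobi-field analysis along $\tilde{\gamma}_{\tilde{v}_n}$ (where $K\equiv 0$ forces all Jacobi fields to be affine) should show that the distance from $\tilde{\gamma}_{\tilde{v}_n}$ to $\tilde{\gamma}_{\tilde{v}}$ stays uniformly bounded'' does not hold up. Jacobi fields along $\tilde{\gamma}_{\tilde{v}_n}$ describe infinitesimal variations of \emph{that} geodesic; they say nothing about its distance to a fixed second geodesic, which is not a Jacobi field. And even on their own terms, affine Jacobi fields grow linearly unless their derivative vanishes --- so the analysis points toward divergence, not boundedness, whenever $\tilde{v}_n$ is not exactly axial. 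You have no mechanism forcing $\tilde{v}_n$ to be exactly parallel to $\tilde{v}$; it is only close. A flat geodesic meeting the strip $F$ at a small nonzero angle exits $F$ in finite time and from that point on you have no control. This is precisely the difficulty the theorem is trying to overcome, so the argument is circular at its key step. A secondary issue: you assume $\tilde v$ lies in the interior of a positive-width flat strip, but the periodic flat vector $v$ may be tangent to an isolated closed flat geodesic or to the boundary of a strip, in which case there is no interior for the footpoints of $\tilde v_n$ to sit in.

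The paper's route is quite different and does not attempt to trap $\tilde\gamma_{\tilde v_n}$ inside a strip. It first proves (Lemma~\ref{flat closed geodesic0}, using Theorem~\ref{ideal triangle} on the infinite area of ideal triangles with a flat edge) that a flat geodesic positively asymptotic to a closed geodesic must itself be closed, and then upgrades this via the expansivity property of vectors not tangent to a flat strip (Lemma~\ref{expansivity}) to Lemma~\ref{flat closed geodesic}. Theorem~\ref{nonclosedset} then follows by the same expansivity/limiting argument: if non-periodic $v_n\in\Lambda$ converged to a periodic $v$, one runs the expansivity argument on the pairs $(v_n,v)$ to produce, in the limit, two flat vectors that are negatively asymptotic and one of which lies on the periodic orbit of $v$; Lemma~\ref{flat closed geodesic0} (or~\ref{flat closed geodesic}) then forces a contradiction. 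The essential analytic input is the infinite-area theorem for ideal triangles, not a Jacobi-field bound.
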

\vspace{.1cm}

According to the dichotomy: (1) $\Lambda \subset \text{Per}(g^t)$; \ (2)$\Lambda \cap (\text{Per\ } (g^t))^c \neq \emptyset$, we prove the following two results.
\vspace{.1cm}

\begin{theorem}\label{A}
If $\Lambda \subset \text{Per}(g^t)$, then there is a finite decomposition of $\Lambda$:
$$\Lambda = \mathcal{O}_1 \cup \mathcal{O}_2 \cup \ldots \mathcal{O}_k \cup \mathcal{F}_1\cup \mathcal{F}_2 \cup \ldots \cup \mathcal{F}_l,$$
where each $\mathcal{O}_i, 1\leq i \leq k$, is an isolated periodic orbit and each $\mathcal{F}_j, 1\leq j \leq l$, consists of vectors tangent to a flat strip. Here $k$ or $l$ are allowed to be $0$ if there is no isolated closed flat geodesic or no flat strip.
\end{theorem}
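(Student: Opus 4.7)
The plan is to exploit the flat strip theorem for surfaces without focal points together with the compactness of $\Lambda \subset SM$. First, I would note that $\Lambda$ is a closed, $g^t$-invariant subset of the compact unit tangent bundle (closedness follows from continuity of $K$), and by hypothesis every orbit in $\Lambda$ is periodic. I would then organize $\Lambda$ orbit-by-orbit according to whether the orbit is accompanied by other, asymptotic flat geodesics or stands alone.

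The main structural input is the following. For each $v \in \Lambda$, lift $\gamma_v$ to the universal cover $\widetilde M$ and consider the union of all lifts of flat geodesics that are bi-asymptotic to $\tilde\gamma_v$. By the flat strip theorem (valid on surfaces without focal points), any two such bi-asymptotic geodesics bound a totally geodesic flat strip, and taking the union over this equivalence class produces a maximal flat strip $\widetilde{\mathcal F}(v) \subset \widetilde M$ of some width $r(v) \geq 0$. The deck transformation that closes $\gamma_v$ preserves $\widetilde{\mathcal F}(v)$, so the strip descends to a flat cylinder $C(v) \subset M$ bounded by closed flat geodesics. When $r(v) = 0$ and no other flat geodesic is asymptotic to $\gamma_v$, the orbit $\mathcal{O}(v)$ is isolated in $\Lambda$ and will be one of the $\mathcal O_i$; when $r(v) > 0$, $\mathcal{O}(v)$ lies in one of the $\mathcal F_j$ corresponding to $C(v)$.

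Next, I would show that this dichotomy partitions $\Lambda$ into only finitely many pieces. For the strip components, each cylinder $C_j$ has positive area in $M$, and by maximality of the strips the cylinders are pairwise disjoint (up to their boundary geodesics, which can be grouped consistently), so the inequality $\sum_j \operatorname{area}(C_j) \leq \operatorname{area}(M) < \infty$ limits their number. For the isolated orbits, I argue by contradiction: if infinitely many $\mathcal O_n$ existed, compactness of $\Lambda$ would yield a subsequence accumulating on some $v_\infty \in \Lambda$. Lifting appropriately, the corresponding closed flat geodesics $\tilde\gamma_n$ converge uniformly on compact sets to a lift $\tilde\gamma_{v_\infty}$, and the goal is to upgrade this to genuine bi-asymptoticity, which would place $\tilde\gamma_n$ in the maximal flat strip through $\tilde\gamma_{v_\infty}$ and contradict the isolation of $\mathcal O_n$ in $\Lambda$.

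I expect this upgrade from $C^0$-convergence on compacts to bi-asymptoticity to be the main obstacle. The natural tool is that, along a flat geodesic on a surface without focal points, every perpendicular Jacobi field is parallel (a consequence of the Riccati equation and $K \equiv 0$), so the transverse displacement between two nearby flat geodesics is essentially governed by a bounded quantity rather than growing exponentially. Combined with the fact that both geodesics in question are closed, so their lifts are invariant under deck translations, $C^0$-proximity on a single period-length interval should force proximity on all of $\mathbb R$, yielding the desired bi-asymptoticity. Together with the area argument for strips, this contradiction gives the finite decomposition $\Lambda = \mathcal O_1 \cup \cdots \cup \mathcal O_k \cup \mathcal F_1 \cup \cdots \cup \mathcal F_l$ stated in the theorem.
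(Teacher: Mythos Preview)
Both of your finiteness arguments have gaps. For the flat strips, the inequality $\sum_j \operatorname{area}(C_j) \le \operatorname{area}(M)$ bounds the total area, not the number of cylinders: nothing you have said rules out infinitely many maximal flat cylinders whose widths tend to zero, so the area bound alone does not yield finiteness. For the isolated orbits, the step you correctly flag as the main obstacle---upgrading $C^0$-convergence on compacts to bi-asymptoticity---is a genuine obstacle that your sketch does not overcome. Along a flat geodesic the perpendicular Jacobi equation is $j''=0$, so $j(t)=j(0)+tj'(0)$ is linear; only the stable and unstable Jacobi fields are parallel, not all of them, and in any case the distance between two distinct nearby geodesics is not a Jacobi field along either one (the curvature between them need not vanish). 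The deck-translation idea also breaks down when the periods of the $\gamma_n$ are unbounded, which the hypothesis $\Lambda\subset\mathrm{Per}(g^t)$ does not exclude: then $\tilde\gamma_n$ and $\tilde\gamma_{v_\infty}$ are axes of different elements of $\pi_1(M)$, and proximity over a single period of $\gamma_{v_\infty}$ says nothing over all of $\mathbb R$.

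The paper does not attempt to prove bi-asymptoticity. Given $x'_k\to x$ in $\Lambda$ with the $x'_k$ tangent to pairwise distinct isolated closed flat geodesics or flat strips, it runs an expansivity argument in the spirit of Proposition~\ref{prop2}: the orbits must eventually separate to a fixed scale $\epsilon_0$, so one finds first times $s_k\to\infty$ with $d(g^{s_k}x'_k,g^{s_k}x)=\epsilon_0$ and passes to subsequential limits $a,b$ satisfying $d(g^ta,g^tb)\le\epsilon_0$ for all $t\le 0$. Lemmas~\ref{boundary}, \ref{flat closed geodesic0} and~\ref{flat closed geodesic} then force $\gamma_x$ to be one boundary of a flat strip of some positive width $\delta_1$. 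Repeating the argument with $-x,-x'_k$ produces a second strip on the other side of $\gamma_x$, so $x$ is interior to a strip of width $\delta_1+\delta_2$; iterating the whole argument with $b$ in place of $x$ enlarges the strip again, which is impossible on a compact surface. This two-sided enlargement, rather than an area count or a bi-asymptoticity claim, is what drives the finiteness.
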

\vspace{.1cm}

We remark that, if $\Lambda \subset \text{Per}(g^t)$, then Theorem \ref{A} immediately implies $\nu(\Lambda)=0$, and therefore the geodesic flow is ergodic.
\vspace{.1cm}

\begin{theorem}\label{B}
If $\Lambda \cap (\text{Per} (g^t))^c \neq \emptyset$, then there exist $y, z \in \Lambda$, $y\notin \mathcal{O} (z)$, such that
$$d(g^t(y), g^t(z))\to 0,  \ \ \text{as\ } t\to +\infty.$$
\end{theorem}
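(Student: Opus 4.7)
The plan is to take the non-closed flat orbit given by the hypothesis, use compactness of $SM$ together with Theorem \ref{nonclosedset} to find an accumulation orbit $w \in \omega(v) \setminus \mathcal{O}(v)$, lift everything to the universal cover $\tilde M$, and then invoke the no focal points geometry to produce a second flat orbit strictly forward asymptotic to $w$ in $SM$.

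More precisely, I would pick $v \in \Lambda \cap (\mathrm{Per}(g^t))^c$. By Theorem \ref{nonclosedset} this set is closed and $g^t$-invariant, so $\overline{\mathcal{O}(v)} \subset \Lambda \cap (\mathrm{Per}(g^t))^c$; since $v$ is non-periodic and $SM$ is compact, there exist $w \in \omega(v) \setminus \mathcal{O}(v)$ and $t_n \uparrow +\infty$ with $g^{t_n}(v) \to w$. Fixing lifts $\tilde v, \tilde w \in S\tilde M$, one can pick deck transformations $\phi_n \in \pi_1(M)$ so that $\phi_n g^{t_n}(\tilde v) \to \tilde w$ in $S\tilde M$. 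Each $\phi_n \gamma_{\tilde v}$ is a flat geodesic, its shifted reparameterization $s \mapsto \phi_n \gamma_{\tilde v}(t_n + s)$ converges to $\gamma_{\tilde w}$ uniformly on compact $s$-intervals, and its forward endpoint $\phi_n \gamma_{\tilde v}(+\infty)$ converges in $\partial \tilde M$ to $\xi := \gamma_{\tilde w}(+\infty)$. The condition $w \notin \mathcal{O}(v)$ ensures $\phi_n \gamma_{\tilde v} \neq \gamma_{\tilde w}$ as subsets of $\tilde M$ for every $n$.

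Next, using the Busemann function $b_\xi$ at $\xi$ (well-defined and $C^1$ under no focal points, with level sets the horospheres centered at $\xi$), I would slide each $\phi_n g^{t_n}(\tilde v)$ along the geodesic flow to a fixed horosphere at $\xi$ and then laterally, by further deck transformations, so the sequence of basepoints remains within a fundamental domain while keeping a uniform positive distance from $\pi(\tilde w)$. By compactness, a subsequential limit $\tilde y \in S\tilde M$ exists; since $\Lambda$ is closed, $\tilde y$ is flat; by construction $\gamma_{\tilde y}(+\infty) = \xi$ and $\tilde y$ is not in the $\pi_1(M)$-orbit of $\tilde w$. Setting $y = \pi(\tilde y) \in SM$ and $z = w$ gives $y, z \in \Lambda$ with $y \notin \mathcal{O}(z)$. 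Since $\gamma_{\tilde y}$ and $\gamma_{\tilde w}$ share the forward endpoint $\xi$, the no focal points hypothesis implies $t \mapsto d(\gamma_{\tilde y}(t), \gamma_{\tilde w}(t))$ is bounded and non-increasing, hence converges to some $L \geq 0$. The case $L > 0$ would force the two geodesics to bound a flat strip of positive width, which is ruled out by the off-orbit construction of $\tilde y$. Thus $L = 0$, and together with direction convergence one obtains $d_{SM}(g^t y, g^t z) \to 0$ as $t \to +\infty$.

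The main obstacle is the extraction of the limit $\tilde y$ in a genuinely different $\pi_1(M)$-orbit from $\tilde w$: a naive horosphere projection of $\phi_n g^{t_n}(\tilde v)$ converges to $\tilde w$ itself and yields nothing new, so the auxiliary deck transformations must be selected so that basepoints stay uniformly separated from $\pi(\tilde w)$ modulo the deck action while still accumulating on a single horosphere. Once this construction is carried out, the promotion from boundedness to strict convergence of $d(\gamma_{\tilde y}(t), \gamma_{\tilde w}(t)) \to 0$ is exactly where the no focal points hypothesis (as opposed to nonpositive curvature) plays a role: in the latter setting a flat strip would always persist, whereas under only no focal points a positive transverse curvature is permitted and can focus the two flat geodesics together.
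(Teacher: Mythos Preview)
Your proposal has a genuine gap at exactly the point you flag as ``the main obstacle'': you never actually produce the auxiliary deck transformations that keep the basepoints of $\phi_n g^{t_n}(\tilde v)$ uniformly separated from $\pi(\tilde w)$ modulo the deck action while preserving the forward endpoint $\xi$. Without this, the limit collapses to $\tilde w$ and you obtain nothing. The paper does not attempt this construction at all; instead it works entirely inside the single orbit $\mathcal{O}(x)$. By compactness one finds pairs $x_k=g^{t_k}(x)$, $x'_k=g^{t'_k}(x)$ with $d(x_k,x'_k)<1/k$ and $t'_k-t_k\to+\infty$ (Lemma~\ref{closelemma}); since $x$ is not tangent to a flat strip (Theorem~\ref{flat strip closed} plus $x\notin\mathrm{Per}(g^t)$), the expansivity property (Lemma~\ref{expansivity}) yields times $s_k\to+\infty$ at which the pair first separates to a fixed scale $\epsilon_0$. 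Passing to limits $a=\lim g^{s_k}(x_k)$, $b=\lim g^{s_k}(x'_k)$ gives two points with $d(a,b)=\epsilon_0$ and $d(g^t(a),g^t(b))\le\epsilon_0$ for all $t<0$; the delicate step is showing $a\notin\mathcal{O}(b)$, which is done by a triangle-inequality argument in $\widetilde M$ showing $d(a,\widetilde\gamma_b)$ is bounded away from $0$, followed by Lemma~\ref{flat closed geodesic} to rule out a nontrivial deck transformation carrying $\widetilde\gamma_a$ to $\widetilde\gamma_b$. Setting $y=-a$, $z=-b$ finishes. The expansivity step is the replacement for your unspecified ``sliding'' construction: it manufactures the separation $\epsilon_0$ directly rather than trying to maintain it under a limiting process.

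Your final step is also incorrect as stated. You claim the case $L>0$ is ``ruled out by the off-orbit construction of $\tilde y$'', but $y\notin\mathcal{O}(z)$ does not by itself prevent $\widetilde\gamma_y$ and $\widetilde\gamma_z$ from bounding a flat strip. The paper's argument here is that if $L>0$ then Lemma~\ref{boundary} forces $\gamma_y,\gamma_z$ to converge to the boundary of a flat strip, and Lemma~\ref{flat closed geodesic} (together with Theorem~\ref{flat strip closed}) then makes $y,z$ periodic, contradicting $y,z\in\Lambda\cap(\mathrm{Per}(g^t))^c$. Your closing remark that positive transverse curvature under no focal points is what ``focuses the two flat geodesics together'' is a misreading of the mechanism: the flat strip lemma holds equally under nonpositive curvature and under no focal points, and the exclusion of $L>0$ proceeds identically in both settings via periodicity of flat strips.
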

\vspace{.1cm}

Our main result is the following theorem, which means that under certain condition the scenario in Theorem \ref{B} can not happen.
\vspace{.1cm}

\begin{theorem}\label{Main}
If the set $\{p\in M: K(p)<0\}$ has at most finitely many connected components, then $\Lambda \subset \text{Per}(g^t)$. In particular, the geodesic flow is ergodic.
\end{theorem}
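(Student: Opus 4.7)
The plan is to combine the dichotomy supplied by Theorems~\ref{A} and \ref{B} with a universal-cover analysis of flat half-strips, and then to extract a combinatorial contradiction from the finite-components hypothesis. By Theorem~\ref{A}, if $\Lambda \subset \mathrm{Per}(g^t)$ then $\nu(\Lambda)=0$; combined with Theorems~\ref{pesin} and \ref{regular}, this already yields ergodicity. So the only thing to rule out is the second alternative $\Lambda \cap (\mathrm{Per}(g^t))^c \neq \emptyset$, and I would do this by contradiction.

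Assume this alternative holds. Theorem~\ref{B} supplies $y,z \in \Lambda$ with $y \notin \mathcal{O}(z)$ and $d(g^t(y), g^t(z)) \to 0$ as $t \to +\infty$. Lift to the universal cover $\tilde M$: choose $\tilde y, \tilde z \in S\tilde M$ so that $d(g^t(\tilde y), g^t(\tilde z)) \to 0$; then $\tilde\gamma_y$ and $\tilde\gamma_z$ share a common forward endpoint $\xi$ in $\partial \tilde M$. Since both geodesics are flat and $(M,g)$ has no focal points, I would invoke a flat half-strip argument (in the spirit of the classical flat strip theorem for bi-asymptotic geodesics) to show that the region $R$ bounded by the forward rays of $\tilde\gamma_y$ and $\tilde\gamma_z$ is foliated by flat geodesics forward asymptotic to $\xi$. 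Projecting to $M$, this gives $\pi(R)\subset Z:=\{p\in M:K(p)=0\}$, and in particular $\pi(R)$ is entirely disjoint from $\{K<0\}$.

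Next, since $z\in\Lambda\cap(\mathrm{Per}(g^t))^c$, which is closed and invariant by Theorem~\ref{nonclosedset}, the forward orbit of $z$ accumulates on some $z'$ in that same set. Pick $t_n\to+\infty$ and deck transformations $\alpha_n$ with $\alpha_n\cdot g^{t_n}(\tilde z)\to\tilde z'$. The translates $R_n:=\alpha_n\cdot R$ are flat half-strips whose boundary geodesics track shifts of $\tilde\gamma_z$ from one side. The finite-components hypothesis is the essential input: the preimages in $\tilde M$ of the finitely many components of $\{K<0\}$ give, modulo the deck group, only finitely many combinatorial types of configurations for such strips relative to the negative-curvature pockets. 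A pigeonhole argument modeled on \cite{Wu} then forces two distinct $R_n,R_m$ to share a flat boundary geodesic, producing a non-trivial deck element $\alpha_m^{-1}\alpha_n$ preserving a flat geodesic in $\Lambda$. Tracing this back through the half-strip construction shows that $\tilde\gamma_z$ itself must be invariant under some non-trivial deck element, whence $\gamma_z$ is closed, contradicting $z\notin\mathrm{Per}(g^t)$.

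The main obstacle will be the final combinatorial step. In the non-positively curved setting of \cite{Wu} one has $\partial\{K<0\}\subset Z$, which makes the combinatorics of flat strips against negative-curvature pockets transparent. Under the no focal points hypothesis curvature can be positive, so $\partial\{K<0\}$ need not be flat and positive-curvature regions may separate flat geodesics in subtle ways; one must use the absence of conjugate points along flat geodesics and the convexity of Busemann functions to prevent the flat half-strips from crossing positive curvature pockets in uncontrolled ways, and then adapt the counting argument of \cite{Wu} to this refined picture.
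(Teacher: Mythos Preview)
Your overall architecture is right up to the point where you lift $\gamma_y,\gamma_z$ to asymptotic flat geodesics $\widetilde\gamma_1,\widetilde\gamma_2$ in $\widetilde M$ bounding an ideal triangle with vertex $w$. But the step you call the ``flat half-strip argument'' is a genuine gap, and in fact it is exactly the place where the finite-components hypothesis must enter, not before. There is no half-strip analogue of the flat strip lemma: Lemma~\ref{flat strip lemma} requires the geodesics to be bi-asymptotic, and for merely forward-asymptotic flat geodesics the region between them need \emph{not} be flat. Indeed, if your claim that $R$ is flat held in general, the contradiction would be immediate (a flat region between asymptotic geodesics forces constant distance, contradicting $d\to 0$), and the hypothesis on $\{K<0\}$ would never be used. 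Your entire subsequent pigeonhole argument on deck translates $R_n$ would then be superfluous---so the fact that you still feel the need for it is itself a signal that the half-strip step is not available.

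What the paper actually does is use the finite-components hypothesis to \emph{prove} that the tail of the ideal triangle is flat. The mechanism is geometric rather than combinatorial: since $\{K<0\}$ has finitely many components on $M$, the inscribed radii of the lifted components in $\widetilde M$ are bounded below by some $\rho>0$. On the other hand, by Corollary~\ref{selfintcor} the non-closed flat geodesic $\gamma_y$ has infinitely many self-intersections at angle $\geq\alpha>0$; lifting, one gets deck translates $\phi_i(\widetilde\gamma_1)$ crossing $\widetilde\gamma_1$ (and hence also $\widetilde\gamma_2$, since the triangle narrows) at times $t_i\to\infty$. These crossings chop the thin end of the ideal triangle into quadrilaterals $D_i$ whose inscribed radii tend to $0$, so eventually no component of $\{K<0\}$ fits inside. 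Past some $t_0$ the triangle therefore has $K\geq 0$; a Jacobi-field argument then forces $d(\widetilde\gamma_1(t),\widetilde\gamma_2(t))$ to be constant for $t\geq t_0$, contradicting $d\to 0$. So the self-intersection result (Corollary~\ref{selfintcor}), not a pigeonhole on strips, is the missing ingredient you need.
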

\vspace{.1cm}

Theorem \ref{Main} gives a negative answer to Question 6.2.1 asked by Burns in \cite{BM}, for surfaces without focal points when $\{p\in M: K(p)<0\}$ has at most finitely many connected components. Furthermore, Theorem \ref{A} exhibits that in fact there are at most finitely many flat strips and isolated closed flat geodesics in this case.\\

So far, it is still unknown that whether Conjecture \ref{conjecture} is true or not in general. In Section \ref{section 4}, we discover several properties of the flat geodesics on surfaces without focal points, which include:
\begin{itemize}
\item All flat strips are closed.
\item A unit vector not tangent to a flat strip has the expansivity property.
\item An ideal triangle with a flat geodesic as an edge has infinite area.
\item A non-closed flat geodesic has infinitely many self-intersections.
\end{itemize}
All these results together with our Theorem \ref{nonclosedset} are believed to be important toward Conjecture \ref{conjecture} in future research.\\

The paper is organized as follows. In Section \ref{section 2}, we will present some preliminaries on the geodesic flows on surfaces without focal points. The proof of Theorem \ref{regular} is shown afterward in Section \ref{section 3}. In Section \ref{section 4}, we prove Theorem \ref{nonclosedset} and the above properties of the flat geodesics. Our main Theorems \ref{A}, \ref{B} and \ref{Main} are proved in the last section. Throughout the remainder of the paper, we always let $M$ be a smooth, connected and closed surface with genus $\mathfrak{g} \geq 2$, and equipped with a $C^\infty$ Riemannian metric $g$ without focal points. \\

\section{Preliminaries on surfaces without focal points}\label{section 2}

\subsection{Jacobi fields, stable and unstable distributions}

In order to study the dynamics of geodesic flows, we should investigate the geometry of the second tangent bundle $TTM$. Let $\pi:TM\rightarrow M$ be the natural projection, i.e., $\pi(v)=p$ where $v \in T_{p}M$.
The connection map $K_{v}:T_{v}TM\rightarrow T_{\pi(v)}M$ is defined as follows. For any $\xi\in T_vTM$, $K_{v}\xi:= (\nabla X)(t)|_{t=0}$,
where $X:(-\epsilon,\epsilon)\rightarrow TM$ is a smooth curve satisfying $X(0)=v$ and $X'(0)=\xi$, and $\nabla$ is the covariant derivative along the curve $\pi(X(t))\subset M$. Then the standard Sasaki metric on $TTM$ is given by
$$\langle\xi,\eta\rangle_{v}=\langle d\pi_{v}\xi,d\pi_{v}\eta\rangle+\langle K_{v}\xi,K_{v}\eta\rangle, \quad \xi, \eta\in T_vTM.$$
\vspace{-.2cm}

Recall that the \emph{Jacobi equation} along a geodesic $\gamma_v(t)$ is
\begin{equation}\label{e:jacobi0}
J''(t)+R(\gamma'_v(t),J(t))\gamma'_v(t)=0,
\end{equation}
where $R$ is the curvature tensor, and $J(t)$ is a Jacobi field along $\gamma_v(t)$ and perpendicular to $\gamma'_v(t)$. Suppose $J_{\xi}(t)$ is the solution of \eqref{e:jacobi0} which satisfies the initial conditions
\[J_{\xi}(0)=d\pi_{v}\xi, ~~\frac{d}{dt}\Big|_{t=0}J_{\xi}(t)=K_{v}\xi.\]
Then, it follows that (cf. p. 386 in \cite{BP})
\[J_{\xi}(t)=d\pi_{g^{t}v}dg^{t}_{v}\xi, ~~\frac{d}{dt}J_{\xi}(t)=K_{g^{t}v}dg^{t}_{v}\xi.\]
On the surface $M$, we have the Fermi coordinates $\{e_1(t), e_2(t)\}$ along the geodesic $\gamma_v(t)$, obtained by the time $t$-parallel translations along $\gamma_v(t)$ of an orthonormal basis $\{e_1(0), e_2(0)\}$ where $e_1(0)=\gamma'_v(0)$. Thus $e_1(t)=\gamma'_v(t)$ and $e_2(t) \perp \gamma'_v(t)$. Suppose that $J(t)=j(t)e_2(t).$ Then the Jacobi equation \eqref{e:jacobi0} becomes
\begin{equation}\label{e:jacobi}
j''(t)+K(t)j(t)=0,
\end{equation}
where $K(t)=K(\gamma_v(t))$ is the curvature at point $\gamma_v(t)$.
Let $u(t)=j'(t)/j(t)$. Then the Jacobi equation \eqref{e:jacobi} can be written in an equivalent form:
\begin{equation}\label{e:ricatti}
u'(t)+u^2(t)+K(t)=0,
\end{equation}
which is called \emph{the Riccati equation}.\\

Using the Fermi coordinates, we can write \eqref{e:jacobi0} in the matrix form
\begin{equation}\label{e:jacobi1}
\frac{d^2}{dt^2}A(t)+K(t)A(t)=0.
\end{equation}
The following result is a standard fact.
\vspace{.1cm}

\begin{proposition}[Cf. \cite{Eb}]\label{Jacobi}
Given $s\in \mathbb{R}$, let $A_{s}(t)$ be the unique solution of \eqref{e:jacobi1}
satisfying $A_{s}(0)=Id$ and $A_{s}(s)=0$, then there exists a limit
\[A^{+}=\lim_{s\rightarrow +\infty}\frac{d}{dt}\Big|_{t=0}A_{s}(t).\]
\end{proposition}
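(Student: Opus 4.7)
On a surface the Jacobi equation perpendicular to $\gamma_v$ is scalar, so \eqref{e:jacobi1} reduces to $j_s'' + K(t)\,j_s = 0$ with $j_s(0) = 1$ and $j_s(s) = 0$, and the quantity of interest is simply $j_s'(0)$. The plan is to prove convergence of $j_s'(0)$ as $s \to +\infty$ by showing that $s \mapsto j_s'(0)$ is monotonically increasing in $s$ and uniformly bounded above by $0$; these two properties force existence of the limit in $(-\infty, 0]$, which will serve as the desired $A^+$.

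The uniform upper bound is where the no focal points hypothesis enters. Consider the reversed Jacobi field $\tilde{\jmath}(t) := j_s(s-t)$; it satisfies $\tilde{\jmath}(0) = 0$ and $\tilde{\jmath}'(0) = -j_s'(s) \neq 0$ (otherwise uniqueness of ODE solutions would force $j_s \equiv 0$). The no focal points hypothesis then says $\tfrac{d}{dt}\|\tilde{\jmath}(t)\|^2$ cannot vanish for $t > 0$, so $\tilde{\jmath}^{\,2}$ is strictly increasing on $(0,\infty)$, and reversing $t$ yields that $j_s^2$ is strictly decreasing on $[0,s]$. In particular $j_s > 0$ on $[0, s)$ and $j_s'(t) < 0$ on $(0, s)$, giving $j_s'(0) \leq 0$ uniformly in $s$.

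For monotonicity in $s$, I will use the classical Wronskian computation. For $0 < s_1 < s_2$, set $W(t) := j_{s_2}(t)\, j_{s_1}'(t) - j_{s_1}(t)\, j_{s_2}'(t)$; since $j_{s_1}$ and $j_{s_2}$ satisfy the same linear equation $j'' + K(t)j = 0$, $W$ is constant. Evaluating at $t = 0$ gives $W = j_{s_1}'(0) - j_{s_2}'(0)$, while at $t = s_1$ it equals $j_{s_2}(s_1)\, j_{s_1}'(s_1)$. By the previous paragraph the first factor is positive (since $s_1 < s_2$ and $j_{s_2} > 0$ on $[0, s_2)$) and the second is strictly negative (if $j_{s_1}'(s_1) = 0$ then $j_{s_1}$ and $j_{s_1}'$ would both vanish at $s_1$, forcing $j_{s_1} \equiv 0$). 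Hence $W < 0$, so $j_{s_1}'(0) < j_{s_2}'(0)$. Combined with the uniform upper bound, monotone convergence delivers the limit $A^+ = \lim_{s \to +\infty} j_s'(0) \in (-\infty, 0]$.

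The main conceptual step is extracting the sign and monotonicity information for $j_s$ on $[0, s)$ from the no focal points hypothesis via the reversed Jacobi field; once this is in hand, the Wronskian calculation and the monotone convergence conclusion are routine. I do not anticipate any real obstacle beyond a careful handling of the endpoint behavior at $t = s$, which is controlled by uniqueness of solutions to the linear ODE.
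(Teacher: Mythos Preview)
Your argument is correct. The paper does not actually prove this proposition; it simply cites Eberlein \cite{Eb} as a standard fact, so there is no in-text proof to compare against. What you have written is essentially the classical scalar argument: monotonicity of $s\mapsto j_s'(0)$ via the constant Wronskian, together with an upper bound, forces convergence.

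One remark worth making: Eberlein's original result holds under the weaker hypothesis of \emph{no conjugate points}, whereas you invoke the full no-focal-points assumption to obtain the uniform bound $j_s'(0)\le 0$. Your use of the reversed Jacobi field $\tilde\jmath(t)=j_s(s-t)$ to deduce that $j_s^2$ is strictly decreasing on $[0,s)$ (hence $j_s>0$ and $j_s'<0$ there) is a clean way to exploit the stronger hypothesis available in this paper. Under only no conjugate points one still gets $j_s>0$ on $[0,s)$ and the same Wronskian monotonicity, but the upper bound has to be produced differently (e.g.\ by comparison with the Jacobi field vanishing at $t=0$). Since the paper works throughout on surfaces without focal points, your shortcut is entirely appropriate here; just be aware that the cited reference proves a slightly more general statement.
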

\vspace{.1cm}

Now we can define the \emph{positive limit solution} $A^+(t)$ as the solution of \eqref{e:jacobi1}
satisfying the initial conditions
\[A^{+}(0)=Id, \quad \frac{d}{dt}\Big|_{t=0}A^{+}(t)=A^{+}.\]
It's easy to see that $A^{+}(t)$ is non-degenerate for all $t\in \mathbb{R}$. Similarly, letting $s \rightarrow -\infty$, one can define the \emph{negative limit solution} $A^{-}(t)$ of \eqref{e:jacobi1}.\\

For each $v \in SM$, define
\[E^{+}(v):=\{\xi \in T_{v}SM: \langle\xi,V(v)\rangle=0 \text{\ and\ } J_{\xi}(t)=A^{+}(t)d\pi_{v}\xi\},\]
\[E^{-}(v):=\{\xi \in T_{v}SM: \langle\xi,V(v)\rangle=0 \text{\ and\ } J_{\xi}(t)=A^{-}(t)d\pi_{v}\xi\},\]
where $V$ is the vector field generated by the geodesic flow and $J_{\xi}$ is the solution of \eqref{e:jacobi0} satisfying
\[J_{\xi}(0)=d\pi_{v}\xi,\ ~~\frac{d}{dt}\Big|_{t=0}J_{\xi}(t)=K_{v}\xi.\]
One can check the following properties of $E^{+}(v)$ and $E^{-}(v)$ (see \cite{BP} for more details).
\vspace{.1cm}

\begin{proposition}[Cf. Proposition 12.1.1 in \cite{BP}]\label{subspace}
$E^{+}(v)$ and $E^{-}(v)$ have the following properties:
\begin{enumerate}
  \item $E^{+}(v)$ and $E^{-}(v)$ are $1$-dimensional subspaces of $T_{v}SM$.
  \item $d\pi_{v}E^{+}(v)=d\pi_{v}E^{-}(v)=\{w \in T_{\pi(v)}M: w \text{\ is orthogonal to\ } v\}$.
  \item The subspaces $E^{+}(v)$ and $E^{-}(v)$ are continuous and invariant under the geodesic flow.
  \item Let $\tau: SM \rightarrow SM$ be the involution defined by $\tau v=-v$, then
\[E^{+}(-v)=d\tau E^{-}(v)  \text{\ and\ } E^{-}(-v)=d\tau E^{+}(v).\]
  \item If the curvature satisfies $K(p) \geq -a^{2}$ for some $a > 0$,
then $\|K_{v}\xi\| \leq a \|d\pi_{v}\xi\|$ for any $\xi \in E^{+}(v)$ or $\xi \in E^{-}(v)$.
  \item If $\xi \in E^{+}(v)$ or $\xi \in E^{-}(v)$, then $J_{\xi}(t)\neq 0$ for each $t \in \mathbb{R}$.

  \item $\xi \in E^{+}(v)$ (respectively, $\xi \in E^{-}(v)$) if and only if
\[\langle\xi,V(v)\rangle=0  \text{\ and\ }  \|d\pi_{g^{t}v}dg^{t}_{v}\xi\| \leq c\]
for each $t > 0$ (respectively, $t < 0$) and some $c > 0$.
  \item For $\xi \in E^{+}(v)$ (respectively, $\xi \in E^{-}(v)$),
the function $t \mapsto \|J_{\xi}(t)\|$ is non-increasing (respectively, nondecreasing).
\end{enumerate}
\end{proposition}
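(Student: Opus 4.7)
The plan is to derive all eight properties from a single nontrivial input — existence of the positive and negative limit solutions $A^\pm$ from Proposition \ref{Jacobi} — combined with the Riccati equation \eqref{e:ricatti} and the no-focal-points hypothesis. I would organise the proof so that items (1)–(4) come from the algebraic structure of the limit solution, while items (5)–(8) all reduce to one monotonicity statement on the Riccati solution.

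First I would dispose of items (1)–(4). The connection splitting $T_vTM = H_v\oplus V_v$ identifies $\{\xi:\langle \xi,V(v)\rangle=0\}$ with the space of pairs $(d\pi_v\xi,K_v\xi)$ of vectors perpendicular to $v$; by definition $\xi\in E^+(v)$ iff $K_v\xi=A^+\,d\pi_v\xi$, so $E^+(v)$ is the graph of a linear map on $\{w\perp v\}$, which gives (2), and because we are on a surface this latter space is one-dimensional, which gives (1). For invariance in (3), if $J_\xi$ is the positive-limit Jacobi field along $\gamma_v$, then $J_\xi(\cdot+s)$ is the positive-limit Jacobi field along $\gamma_{g^sv}$ — this is immediate from Proposition \ref{Jacobi} after reparametrising the approximating Jacobi fields. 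Continuity of $E^\pm$ follows from locally uniform convergence in the same limit. Property (4) is the observation that reversing the geodesic via $\tau$ interchanges $s\to+\infty$ with $s\to-\infty$ in the defining limit, so the positive and negative limit solutions get swapped.

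For items (5)–(8), fix $\xi\in E^+(v)\setminus\{0\}$, write $J_\xi(t)=j(t)e_2(t)$, and set $u(t)=j'(t)/j(t)$ wherever defined. The crux is the monotonicity statement $u(t)\leq 0$ for all $t$. I would prove this by approximation: for $s>0$ the Jacobi field $J_s$ with $J_s(0)=d\pi_v\xi$, $J_s(s)=0$ must satisfy $\frac{d}{dt}\|J_s(t)\|^2\big|_{t=0}\leq 0$, since otherwise no-focal-points forbids the subsequent return to zero at $t=s$. Thus $u_s(0)\leq 0$, and passage to the limit $s\to\infty$ gives $u(0)\leq 0$ for the positive limit solution; applying the argument at every base point along the orbit yields (8). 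Since $u$ stays bounded along the orbit, $j$ never vanishes, proving (6). For (5), the Riccati equation $u'=-u^2-K$ with $K\geq -a^2$ traps $u$ inside $[-a,a]$ on intervals of definition, so $\|K_v\xi\|=|j'(0)|=|u(0)|\,|j(0)|\leq a\|d\pi_v\xi\|$. Finally (7): the forward direction is (8) together with $\|d\pi_{g^tv}dg_v^t\xi\|=\|J_\xi(t)\|$; the converse uses $E^+(v)\cap E^-(v)=\{0\}$ (from opposite monotonicities) plus the fact that a nonzero $E^-$ component forces unbounded forward growth.

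The main obstacle is the sign statement $u^+(t)\leq 0$ supporting (8). Under nonpositive curvature it would follow at once from global convexity of $\|J\|^2$, but under only no focal points convexity can fail. The substitute is the one-sided monotonicity — once $t\mapsto\|J(t)\|^2$ begins to increase it cannot decrease again — which needs to be invoked carefully to rule out oscillations in the approximating fields $J_s$ and to justify the limit. Making this rigorous is the technically delicate step; it is essentially the content of Eberlein's argument \cite{Eb} and Chapter 12 of \cite{BP}, and a self-contained proof would follow that line.
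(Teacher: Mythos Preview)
The paper does not supply its own proof of this proposition: it is quoted from Barreira--Pesin with the remark ``see \cite{BP} for more details'', so there is no in-paper argument to compare against and your sketch has to be assessed on its own merits.

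Your treatment of (1)--(4) and of the monotonicity (8) via the approximating fields $J_s$ is sound and is the standard route. But your converse argument for (7) has a genuine gap. You claim $E^+(v)\cap E^-(v)=\{0\}$ ``from opposite monotonicities''; what opposite monotonicities actually give is that $\|J_\xi\|$ is constant for $\xi$ in the intersection, not that $\xi=0$. Indeed, immediately after the proposition the paper points out that along a flat geodesic there is a nonzero parallel Jacobi field in $E^+(v)\cap E^-(v)$, and on a surface this forces $E^+(v)=E^-(v)$. In that situation your splitting $\xi=\xi^++\xi^-$ is unavailable, and ``a nonzero $E^-$ component forces unbounded forward growth'' is simply false. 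The correct argument for the converse of (7) is: take $\eta\in E^+(v)$ with $d\pi_v\eta=d\pi_v\xi$ (possible by (2)); then $J_\xi-J_\eta$ vanishes at $0$, and on a manifold without focal points any nontrivial Jacobi field vanishing at a point is \emph{unbounded} as $t\to+\infty$ (divergence of geodesics, Green \cite{Gre} in dimension two, O'Sullivan \cite{OS} in general); since $J_\xi$ and $J_\eta$ are both bounded forward, $\xi=\eta\in E^+(v)$. A secondary issue is the logical order among (5), (6), (8): you deduce (6) from ``$u$ stays bounded'', but $u=j'/j$ is only defined where $j\neq 0$, so this is circular. Nonvanishing of $j$ should come first, from the limit construction itself (the paper records that $A^+(t)$ is nondegenerate right after Proposition~\ref{Jacobi}); once (6) is in hand, $u$ is globally defined and the Riccati comparison then yields (5) and the sign (8).
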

\vspace{.1cm}

When $\gamma_v(t)$ is a flat geodesic, there exists a non-trivial element $\xi\in E^+(v)\cap E^-(v)$, and $J_\xi$ is a parallel Jacobi field along $\gamma_v(t)$, i.e., $J_\xi'(t)=0, \forall\ t\in \mathbb{R}$. In this case, $E^+(v)$ and $E^-(v)$ do not span the whole second tangent space $T_vSM$. The distributions $E^s$ and $E^u$ on $SM$ are  integrable and their integral manifolds form foliations $W^s$ and $W^u$ of $SM$, respectively. These two foliations are both invariant under $g^t$, known as the stable and unstable horocycle foliations.\\

\subsection{Universal Cover}

Let $\widetilde{M}$ be the universal Riemannian cover of $M$, i.e., a simply connected complete Riemannian manifold for which $M=\widetilde{M}/\Gamma$ where $\Gamma$ is a discrete subgroup of the group of isometries of $\widetilde{M}$, isomorphic to $\pi_1(M)$. Recall that we assume $M$ has no focal points. According to Hadmard-Cartan Theorem, for each two points on $\widetilde{M}$ there is a unique geodesic segment joining them. Therefore $\widetilde{M}$ can be identified with the open unit disk in the plane. The lifting of a geodesic $\gamma$ from $M$ to $\widetilde{M}$ is denoted by $\widetilde{\gamma}$. Two geodesics $\widetilde\gamma_1$ and $\widetilde\gamma_2$ are said to be \emph{asymptotes} if $d(\widetilde\gamma_1(t), \widetilde\gamma_2(t)) \leq C$ for some $C>0$ and $\forall\ t>0$. It is easy to check that the asymptotes relation is an equivalence relation. Let $\widetilde{M}(\infty)$ be the set of all the equivalence classes, which can be identified with the boundary of the unit disk. Then the set $$\overline{M}:=\widetilde{M}\cup\widetilde{M}(\infty)$$ can be identified with the closed unit disk in the plane. Denote by $\widetilde{\gamma}(+\infty)$ the asymptote class of the geodesic $\widetilde{\gamma}$, and by $\widetilde{\gamma}(-\infty)$ the one of the reversed geodesic of $\widetilde{\gamma}$.
We use $\widetilde{W}^s$ and $\widetilde{W}^u$ to denote the lifting of $W^s$ and $W^u$ to $S\widetilde{M}$, respectively. It is obvious that if $w \in \widetilde{W}^s(v)$, then geodesics $\widetilde{\gamma}_v(t)$ and $\widetilde{\gamma}_w(t)$ are asymptotic.\\

An isometry $\alpha$ of $\widetilde{M}$ is called \emph{axial} if there exist a geodesic $\widetilde{\gamma}$ on $\widetilde{M}$ and a $t_{1}>0$ such that for all $t\in \mathbb{R}$, $\alpha(\widetilde\gamma(t)) = \widetilde\gamma(t+t_{1})$. The corresponding geodesic $\widetilde\gamma$ is called an \emph{axis} of $\alpha$. The following result is due to Watkins \cite{Wat}, which is proved for rank $1$ manifolds without focal points. Here we only need it for surfaces without focal points.
\vspace{.1cm}

\begin{lemma}[Cf. Theorem 6.11 in \cite{Wat}]\label{watkins}
Let $\widetilde\gamma$ be an axis of an isometry $\alpha$ of $\widetilde{M}$. Suppose that $\widetilde\gamma$ is not a flat geodesic.
Then for all neighborhoods $U \subseteq \overline{M}$ of $\widetilde\gamma(-\infty)$ and $V \subseteq \overline{M}$ of $\widetilde\gamma(+\infty)$,
there is an integer $N \in \mathbb{N}$ such that
$$\alpha^{n}(\overline{M}-U)\subseteq V, ~~\alpha^{-n}(\overline{M}-V)\subseteq U,$$
for all $n \geq N$.
\end{lemma}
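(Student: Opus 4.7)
The plan is to prove this north--south dynamics statement by first establishing pointwise attraction $\alpha^n(x)\to\widetilde\gamma(+\infty)$ for every $x\ne\widetilde\gamma(-\infty)$, and then promoting it to uniformity by compactness. Two immediate observations set the stage: since $\alpha\widetilde\gamma(t)=\widetilde\gamma(t+t_1)$, the induced boundary action fixes both ideal endpoints $\widetilde\gamma(\pm\infty)$ of the axis; and with $b$ denoting the Busemann function at $\widetilde\gamma(+\infty)$ normalized so that $b(\widetilde\gamma(0))=0$, a direct calculation gives $b\circ\alpha=b-t_1$. Moreover $\alpha$ setwise preserves $\widetilde\gamma$, so $d(\alpha^n x,\widetilde\gamma)=d(x,\widetilde\gamma)$ for every $n$ and $x$. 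By symmetry (apply the argument below to $\alpha^{-1}$), it suffices to establish the first inclusion.

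For the pointwise attraction, I would exploit horocyclic contraction along the axis. The isometry $\alpha$ sends each horocycle $H_s:=\{b=s\}$ based at $\widetilde\gamma(+\infty)$ to $H_{s-t_1}$. Because $\widetilde\gamma$ is not flat and $\pi(\widetilde\gamma)\subset M$ is a closed geodesic, $\widetilde\gamma$ visits a point of strictly negative curvature periodically with period $t_1$. Combining the Riccati equation~\eqref{e:ricatti} with the monotonicity in Proposition~\ref{subspace}(8) yields a strict horocyclic contraction factor $\lambda<1$ (uniform on compact subarcs) over one period. Iterating, $\alpha^n$ drives both the Busemann value to $-\infty$ and the horocyclic displacement from $\widetilde\gamma$ to zero geometrically, hence $\alpha^n(x)\to\widetilde\gamma(+\infty)$ in the cone topology for every $x\in\widetilde M$. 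To pass to boundary points $x\in\widetilde{M}(\infty)\setminus\{\widetilde\gamma(-\infty)\}$, let $\widetilde\delta_x$ be the geodesic from the $\alpha$-fixed point $\widetilde\gamma(-\infty)$ to $x$ and pick any $p\in\widetilde\delta_x$. Then $\alpha^n\widetilde\delta_x$ joins $\widetilde\gamma(-\infty)$ to $\alpha^n(x)$ and passes through $\alpha^n(p)\to\widetilde\gamma(+\infty)$. Non-flatness implies $\widetilde\gamma$ is the \emph{unique} geodesic between $\widetilde\gamma(-\infty)$ and $\widetilde\gamma(+\infty)$, and the Uniform Visibility Axiom (valid on our surface) furnishes continuity of the ``ideal endpoint pair $\mapsto$ geodesic'' correspondence, forcing $\alpha^n(x)\to\widetilde\gamma(+\infty)$.

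Uniformity over $\overline M\setminus U$ then follows by compactness: if it failed, one would extract $n_k\to\infty$ and $x_k\in\overline M\setminus U$ with $x_k\to x_*\ne\widetilde\gamma(-\infty)$ and $\alpha^{n_k}(x_k)\to y_*\ne\widetilde\gamma(+\infty)$, contradicting the locally uniform pointwise attraction established above. I expect the principal technical obstacle to be quantifying the horocyclic contraction factor $\lambda<1$ in the middle step --- specifically, integrating the Riccati equation across one full period $t_1$ of $\gamma=\pi(\widetilde\gamma)$, using that $\gamma$ encounters a point of negative curvature within that period, to upgrade non-expansion to strict contraction. A secondary delicate point is the careful invocation of the Visibility Axiom to ensure continuity of the geodesic-between-two-ideal-points map at the pair $(\widetilde\gamma(-\infty),\widetilde\gamma(+\infty))$ on a surface without focal points; this is precisely where the non-flatness hypothesis on $\widetilde\gamma$ is essential, since otherwise a flat strip through $\widetilde\gamma$ would destroy uniqueness and the whole dichotomy collapses.
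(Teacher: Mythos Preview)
The paper does not give its own proof of this lemma: it is quoted verbatim as Theorem~6.11 of Watkins~\cite{Wat} and used as a black box. So there is nothing to compare your argument against in the paper itself; you are supplying a proof where the authors simply import one.

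Your overall strategy (pointwise attraction plus a compactness upgrade) is the standard one and is sound. Two points are worth sharpening. First, for interior points $x\in\widetilde M$ the horocyclic--contraction apparatus is unnecessary: you already note that $d(\alpha^n x,\widetilde\gamma)=d(x,\widetilde\gamma)$ is constant while $b(\alpha^n x)=b(x)-nt_1\to-\infty$, and these two facts alone give $\alpha^n x\to\widetilde\gamma(+\infty)$ in the cone topology. Your sentence about the ``horocyclic displacement from $\widetilde\gamma$'' going to zero sits uneasily next to that invariance and should be dropped. Second, and more substantively, the boundary--point step is the real content, and your appeal to ``continuity of the ideal endpoint pair $\mapsto$ geodesic map'' runs in the wrong direction: you know something about a point $\alpha^n(p)$ on the geodesic and want to conclude something about its endpoint $\alpha^n(x)$, not the reverse. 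A workable version is by contradiction: if $\alpha^{n_k}(x)\to y\neq\widetilde\gamma(+\infty)$, then the geodesics $\alpha^{n_k}\widetilde\delta_x$ all issue from the fixed point $\widetilde\gamma(-\infty)$, pass through $\alpha^{n_k}(p)$ (which stays in a fixed tube about $\widetilde\gamma$ and marches off to $\widetilde\gamma(+\infty)$), and have forward endpoints accumulating on $y$; a visibility argument then forces $y=\widetilde\gamma(+\infty)$, using that non-flatness makes $\widetilde\gamma$ the unique geodesic joining its two ideal endpoints. This is where the Riccati/strict--contraction estimate you flag genuinely enters, and your instinct that it is the main technical obstacle is correct.
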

\vspace{.1cm}

Obviously every closed geodesic $\gamma$ in $M$ can be lifted to a geodesic $\widetilde{\gamma}$ on $\widetilde{M}$, such that
$$\widetilde{\gamma}(t+t_0)=\phi(\widetilde{\gamma}(t)), \ \ \forall t \in \mathbb{R},$$
for some $t_0 >0$ and $\phi \in \pi_1(M)$. Therefore, $\widetilde\gamma$ is an axis of $\phi$. In this case, we also say that $\phi$ fixes $\widetilde{\gamma}$ , written as $\phi(\widetilde{\gamma})=\widetilde{\gamma}$. $\phi$ acts on $\widetilde{M}(\infty)$ in a natural way and fixes exactly the two points $\widetilde{\gamma}(\pm \infty)$. Moreover, by Lemma \ref{watkins}, for any $x\in \widetilde{M}(\infty)$, $x \neq \widetilde{\gamma}(\pm \infty)$, we have $$\lim_{n\to +\infty}\phi^n(x)= \widetilde{\gamma}(+\infty) ~~\ ~~\mbox{and} ~~\lim_{n\to -\infty}\phi^n(x)= \widetilde{\gamma}(-\infty).$$\vspace{.1cm}

\section{The regular set}\label{section 3}
This section is devoted to proving Theorem \ref{regular}. The proof of Theorem \ref{regular} in the nonpositive curvature case is given in Lemma 1.1 in \cite{Wu}, though it is already well known in folklore that,  in the non-positive curvature case after Pesin's Theorem \ref{pesin}, all that remains for ergodicity of the geodesic flow is to show that $\Lambda$ has zero Liouville measure. Nevertheless, to prove Theorem \ref{regular} in the no focal points case, we need use some geometric properties of the geodesic flow which we will present below.

For a given $\xi\in T_vSM$, we always let $J_{\xi}(t)$ be the unique Jacobi field satisfying the Jacobi equation \eqref{e:jacobi0} under initial conditions
\[J_{\xi}(0)=d\pi_{v}\xi,\ ~~\frac{d}{dt}\Big|_{t=0}J_{\xi}(t)=K_{v}\xi.\]
Suppose that $J_{\xi}(t)$ is perpendicular to $\gamma'_v(t)$, then $J_\xi(t)=j_\xi(t)e_2(t)$ and $j_\xi(t)=\|J_\xi(t)\|$. Denote $u_\xi(t)=j'_\xi(t)/j_\xi(t)$. Recall that $u_\xi$ is a solution of the Riccati equation \eqref{e:ricatti}.

Given $\xi\in T_vSM$, the \emph{Lyapunov exponent} $\chi(v, \xi)$ is defined as
$$\chi(v, \xi):=\limsup_{T\to \infty}\frac{1}{T}\log\|dg^T\xi\|.$$
The following proposition shows the connection between the Lyapunov exponent $\chi(v, \xi)$ and the function $u_{\xi}$.
\vspace{.1cm}

\begin{proposition}\label{lyapunov}
For any $v\in SM$ and $\xi\in E^+(v)$, one has
\[\chi(v, \xi)=\limsup_{T\to \infty}\frac{1}{T}\int_0^Tu_\xi(t)dt.\]
\end{proposition}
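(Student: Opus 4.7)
The plan is to compare $\|dg^t\xi\|$ directly to $\|J_\xi(t)\|$ using the Sasaki metric and the bounds already recorded in Proposition \ref{subspace}, and then to rewrite $\log\|J_\xi(t)\|$ as an integral of $u_\xi$.

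First I would unpack the Sasaki norm: for any $\xi\in T_vSM$,
\[
\|dg^t_v\xi\|^2=\|d\pi_{g^tv}dg^t_v\xi\|^2+\|K_{g^tv}dg^t_v\xi\|^2=\|J_\xi(t)\|^2+\|J_\xi'(t)\|^2.
\]
Since $\xi\in E^+(v)$ gives $dg^t_v\xi\in E^+(g^tv)$ by the invariance in Proposition \ref{subspace}(3), and since $M$ is closed so that its sectional curvature is bounded below by some $-a^2$, Proposition \ref{subspace}(5) applied at the point $g^tv$ yields
\[
\|J_\xi'(t)\|=\|K_{g^tv}dg^t_v\xi\|\le a\,\|d\pi_{g^tv}dg^t_v\xi\|=a\,\|J_\xi(t)\|.
\]
Therefore
\[
\|J_\xi(t)\|\le\|dg^t_v\xi\|\le\sqrt{1+a^2}\,\|J_\xi(t)\|,
\]
and consequently
\[
\chi(v,\xi)=\limsup_{T\to\infty}\frac{1}{T}\log\|dg^T_v\xi\|=\limsup_{T\to\infty}\frac{1}{T}\log\|J_\xi(T)\|.
\]

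Next I would verify that $J_\xi$ stays perpendicular to $\gamma'_v$, so that it makes sense to write $J_\xi(t)=j_\xi(t)e_2(t)$ and to use the scalar Riccati equation. The defining condition $\langle\xi,V(v)\rangle=0$ for $\xi\in E^+(v)$, together with the standard identification of $V(v)$ as the horizontal lift of $v$ in the Sasaki metric, gives $\langle d\pi_v\xi,v\rangle=0$, i.e.\ $J_\xi(0)\perp\gamma'_v(0)$. The condition $\xi\in T_vSM$ forces $\langle K_v\xi,v\rangle=0$, i.e.\ $J_\xi'(0)\perp\gamma'_v(0)$. Because the curvature operator $R(\gamma'_v,\cdot)\gamma'_v$ preserves the orthogonal complement of $\gamma'_v$, the Jacobi equation \eqref{e:jacobi0} propagates perpendicularity to all $t$, so $J_\xi(t)=j_\xi(t)e_2(t)$.

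Then I would use Proposition \ref{subspace}(6), which guarantees $J_\xi(t)\ne0$ for every $t\in\mathbb{R}$: hence $j_\xi$ has a constant sign, $u_\xi(t)=j_\xi'(t)/j_\xi(t)=\tfrac{d}{dt}\log|j_\xi(t)|$ is well-defined for all $t$, and integration gives
\[
\int_0^T u_\xi(t)\,dt=\log|j_\xi(T)|-\log|j_\xi(0)|=\log\|J_\xi(T)\|-\log\|J_\xi(0)\|.
\]
Dividing by $T$ and taking $\limsup$ eliminates the constant boundary term, which combined with the comparison between $\|dg^T_v\xi\|$ and $\|J_\xi(T)\|$ above yields the claimed identity.

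There is no serious obstacle; the only point requiring a moment of care is the perpendicularity of $J_\xi$ to $\gamma'_v$, since without it one could not reduce from the matrix Jacobi equation to the scalar Riccati equation and write $u_\xi=j_\xi'/j_\xi$. Everything else is bookkeeping with the Sasaki metric and the estimates already supplied by Proposition \ref{subspace}.
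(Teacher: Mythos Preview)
Your proof is correct and follows essentially the same route as the paper: decompose $\|dg^T\xi\|$ via the Sasaki metric, invoke Proposition~\ref{subspace}(5) to control $\|J_\xi'\|$ by $\|J_\xi\|$, reduce to $\limsup_{T\to\infty}\tfrac{1}{T}\log\|J_\xi(T)\|$, and then rewrite $\log|j_\xi|$ as an integral of $u_\xi$. The paper's version is terser---it omits the explicit perpendicularity check and the appeal to Proposition~\ref{subspace}(6)---but your added justifications fill in exactly the steps the paper takes for granted, so there is no substantive difference in approach.
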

\begin{proof}
By the definition of Lyapunov exponents and Proposition \ref{subspace}(5), we have
\begin{equation*}
\begin{aligned}
\chi(v, \xi)&=\limsup_{T\to \infty}\frac{1}{T}\log\|dg^T\xi\|=\limsup_{T\to \infty}\frac{1}{T}\log \sqrt{\|J_\xi(T)\|^2+\|J'_\xi(T)\|^2}\\
&=\limsup_{T\to \infty}\frac{1}{T}\log \|J_\xi(T)\|=\limsup_{T\to \infty}\frac{1}{T}\int_0^T(\log j_\xi(t))'dt\\
&=\limsup_{T\to \infty}\frac{1}{T}\int_0^T\frac{j_\xi'(t)}{j_\xi(t)}dt=\limsup_{T\to \infty}\frac{1}{T}\int_0^Tu_\xi(t)dt.
\end{aligned}
\end{equation*}
\end{proof}
\vspace{-.1cm}
Throughout this section, if $\xi\in E^+(v)$, we write $j(t):=j_\xi(t)$ and $u(t):=u_\xi(t)$ for simplicity. By Proposition \ref{subspace}(8) and the definition of $u(t)$, we know that $u(t)\leq 0, \ \forall t\in\mathbb{R}$.\\

The following notion of uniformly recurrent vectors appeared in \cite{BBE}:
\vspace{.1cm}

\begin{definition}[Cf. \cite{BBE}]\label{uniformlyrecurrent}
A vector $x\in SM$ is said to be uniformly recurrent if for any neighborhood $U$ of $x$ in $SM$
$$\liminf_{t \to \infty} \frac{1}{T}\int_0^T \mathbf{I}_U(g^t(x))dt > 0,$$
where $\mathbf{I}_U$ is the characteristic function of $U$.
\end{definition}
\vspace{.1cm}

The next lemma about the set of uniformly recurrent vectors was stated in \cite{BBE} without a proof, so we provide a proof here. It will be used later in our proof of Theorem \ref{regular}.
\vspace{.1cm}

\begin{lemma}\label{uniformrec}
Let $\Gamma$ be the set of all the uniformly recurrent vectors. Then $\Gamma$ has full Liouville measure.
\end{lemma}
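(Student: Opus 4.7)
The plan is to deduce the lemma from the Birkhoff ergodic theorem applied to indicator functions of a countable basis of the topology on $SM$, together with the finiteness and $g^t$-invariance of $\nu$ (which hold because $M$ is closed).

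Fix a countable basis $\{U_n\}_{n\in\mathbb{N}}$ for the topology of $SM$. For each $n$, apply Birkhoff's theorem to the bounded measurable function $\mathbf{I}_{U_n}$: there is a $\nu$-null set $N_n$ such that for every $x\in SM\setminus N_n$ the limit
\[
\overline{\mathbf{I}}_{U_n}(x):=\lim_{T\to\infty}\frac{1}{T}\int_0^T \mathbf{I}_{U_n}(g^t x)\,dt
\]
exists and defines a $g^t$-invariant function with $\int_{SM}\overline{\mathbf{I}}_{U_n}\,d\nu=\nu(U_n)$.

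Next, I would isolate the bad vectors. Let
\[
B_n:=\{x\in U_n : \overline{\mathbf{I}}_{U_n}(x)=0\},\qquad C_n:=\{x\in SM : \overline{\mathbf{I}}_{U_n}(x)=0\}.
\]
Since $\overline{\mathbf{I}}_{U_n}$ is $g^t$-invariant, the set $C_n$ is $g^t$-invariant, and Birkhoff gives $\int_{C_n}\mathbf{I}_{U_n}\,d\nu=\int_{C_n}\overline{\mathbf{I}}_{U_n}\,d\nu=0$. As $\mathbf{I}_{U_n}\geq 0$, this forces $\nu(C_n\cap U_n)=0$, i.e. $\nu(B_n)=0$. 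Therefore the set
\[
\Gamma_0 := SM\setminus\Bigl(\bigcup_{n\in\mathbb{N}} N_n \ \cup \ \bigcup_{n\in\mathbb{N}} B_n\Bigr)
\]
has full $\nu$-measure.

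Finally I would check that $\Gamma_0\subseteq\Gamma$. Let $x\in\Gamma_0$ and let $U\subseteq SM$ be any neighborhood of $x$. By the basis property, pick $n$ with $x\in U_n\subseteq U$. Since $x\notin N_n$, the time average $\overline{\mathbf{I}}_{U_n}(x)$ exists; since $x\in U_n$ and $x\notin B_n$, this average is strictly positive. Then
\[
\liminf_{T\to\infty}\frac{1}{T}\int_0^T\mathbf{I}_U(g^t x)\,dt\ \geq\ \lim_{T\to\infty}\frac{1}{T}\int_0^T\mathbf{I}_{U_n}(g^t x)\,dt\ =\ \overline{\mathbf{I}}_{U_n}(x)\ >\ 0,
\]
so $x$ is uniformly recurrent. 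This gives $\nu(\Gamma)\geq\nu(\Gamma_0)=\nu(SM)$, as claimed.

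The argument is essentially measure-theoretic folklore, so there is no real obstacle beyond bookkeeping; the only point that requires a touch of care is the passage from "$\overline{\mathbf{I}}_{U_n}(x)=0$ on an invariant set $C_n$" to "$\nu(U_n\cap C_n)=0$", which relies on the equality of the space average of $\overline{\mathbf{I}}_{U_n}$ with $\nu(U_n)$ on every invariant set — guaranteed by Birkhoff because $\nu$ is finite and $g^t$-invariant.
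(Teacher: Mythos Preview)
Your proof is correct and takes essentially the same approach as the paper: apply Birkhoff's theorem to the indicators of a countable basis and show that each set $B_n=\{x\in U_n:\overline{\mathbf{I}}_{U_n}(x)=0\}$ is $\nu$-null. The paper frames this as a contradiction argument (assuming $\nu(\Gamma^c)>0$ and reapplying Birkhoff to the indicator of a bad subset of some $U_N$), while your direct argument via the invariance of $C_n$ and the identity $\int_{C_n}\mathbf{I}_{U_n}\,d\nu=\int_{C_n}\overline{\mathbf{I}}_{U_n}\,d\nu$ is a touch cleaner, but the core idea is identical.
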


\begin{proof}
Let $\{U_n\}_{n\in\mathbb{N}}$ be a countable base consisting of open sets on $SM$. By Birkhoff ergodic theorem, there exists a set $X \subset SM$ of full measure such that for all $x \in X$ and all $n\in\mathbb{N}$, the limit
$$f_n(x):=\lim_{T \to \infty} \frac{1}{T}\int_0^T \mathbf{I}_{U_n}(g^t(x))dt$$
exists. And
$$\int_{SM}f_n(x)d\nu(x)= \nu(U_n).$$
\vspace{.1cm}

Assume the contrary that $\nu(\Gamma^c)>0$. Then $\Gamma^c \cap X$ is non-empty. For each $y \in\Gamma^c \cap X$, which is not uniformly recurrent, there exists a neighborhood $U$ of $y$ in $SM$ such that
\begin{equation*}\label{e:not}
 \liminf_{T \to \infty} \frac{1}{T}\int_0^T \mathbf{I}_U(g^t(y))dt =0.
\end{equation*}
Then there exists an $n(y)$ such that $U_{n(y)} \subset U$, and
\begin{equation}\label{e:f}
 f_{n(y)}(y)=\lim_{T \to \infty} \frac{1}{T}\int_0^T \mathbf{I}_{U_n}(g^t(y))dt \leq \liminf_{T \to \infty} \frac{1}{T}\int_0^T \mathbf{I}_U(g^t(y))dt =0.
\end{equation}
Since there are only countably many $U_n$, we can find some $N$ such that $\nu (U_N \cap \Gamma^c \cap X)>0$. By \eqref{e:f},  $f_N(y)=0$ for any $y \in U_N \cap \Gamma^c\cap X$.\\

On the other hand, Birkhoff ergodic theorem implies that for a.e. $y \in U_N \cap \Gamma^c\cap X$, one has
$$g(y):=\lim_{T \to \infty} \frac{1}{T}\int_0^T \mathbf{I}_{(U_N \cap \Gamma^c\cap X)}(g^t(y))dt$$
exists with
\begin{equation}\label{e:gamma}
\int_{SM}g(y)d\nu(y)=\nu(U_N \cap \Gamma^c\cap X) >0.
\end{equation}
However, by \eqref{e:f}, we have $g(y) \leq f_N(y)=0$ for all $y \in U_N \cap \Gamma^c\cap X$, which contradicts to \eqref{e:gamma}. This proves the lemma.
\end{proof}
\vspace{.2cm}

Given an open set $U\subset SM$ and a unit vector $w\in SM$, we say that the orbit $g^tw$ has positive frequency of return to $U$ if $\liminf_{t\to \infty}\frac{T_U(t)}{t} >0$, where $T_U(t)$ denotes the total length of the set
\[\mathfrak{T}_U(t):=\{\tau: 0\leq \tau \leq t \text{\ and\ }g^\tau w \in U\}.\]
Now we are ready to prove Theorem \ref{regular}.
\begin{proof}[Proof of Theorem \ref{regular}]
Choose an arbitrary $v\in \Lambda^c \cap \Delta^c \cap \Gamma$, where $\Gamma$ denotes the set of uniformly recurrent vectors. Recall that $\nu(\Gamma)=1$ by Lemma \ref{uniformrec}. Without loss of generality, we assume $v\in (\Delta^+)^c$.  We claim that $K(\gamma_v(t))\geq 0,\ \forall t \in \mathbb{R}$.\\

Assume the contrary that $K(\gamma_v(t_0))<0$ for some $t_0>0$. Since $K(\gamma_v(t_0))<0$, we can choose two open neighborhoods $W_1 \supset W_2$ of $g^{t_0}v$, such that $-\delta_2< K|_{\pi(W_1)}<-\delta_1<0$ and $\text{dist}(\partial W_1, \partial W_2)>\sigma$, for some $\delta_2>\delta_1>0$ and $\sigma>0$.\\

Choose an open neighborhood $U$ of $v$ which is small enough, such that for any $w\in U$ one has $g^{t_0}(w)\in W_2.$ Since $\liminf_{T\to \infty}\frac{1}{T}\int_0^T\mathbf{I}_U(g^tv)dt >0$, we have
\[\liminf_{T\to \infty}\frac{1}{T}\int_{t_0}^{T+t_0}\mathbf{I}_{W_2}(g^tv)dt >0.\]
Then the orbit of $v$ has positive frequency of return to $W_2$, that is
\begin{equation}\label{e:frequency}
\begin{aligned}
\liminf_{T\to \infty}\frac{T_{W_2}(T)}{T}&=\liminf_{T\to \infty}\frac{1}{T+t_0}\int_{0}^{T+t_0}\mathbf{I}_{W_2}(g^tv)dt\\
&=\liminf_{T\to \infty}\frac{T}{T+t_0}\cdot\frac{1}{T}\int_{t_0}^{T+t_0}\mathbf{I}_{W_2}(g^tv)dt>0.
\end{aligned}
\end{equation}
\vspace{-.1cm}

\begin{lemma}\label{negative}
There exists a constant $c>0$ such that if $g^tv\in W_2$, then $u(t)\leq -c$, for all $t\geq 0$.
\end{lemma}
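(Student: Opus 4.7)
The plan is to exploit the Riccati equation \eqref{e:ricatti} together with the sign constraint $u(t)\le 0$ supplied by Proposition~\ref{subspace}(8). First, I would convert the spatial separation $\operatorname{dist}(\partial W_1,\partial W_2)>\sigma$ into a temporal one. Because $v$ is parallel-translated along $\gamma_v$, the geodesic vector field $V$ has unit Sasaki norm (its horizontal component is $v$ and its vertical component vanishes), so $d_{SM}(g^{s}v,g^{t}v)\le|t-s|$. Since every $w\in W_2$ lies at distance strictly greater than $\sigma$ from $\partial W_1$, this yields a uniform $\tau>0$ (one may take $\tau=\sigma$) such that whenever $g^{t}v\in W_2$ the whole arc $\{g^{s}v:s\in[t,t+\tau]\}$ stays in $W_1$. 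On that arc $K(\gamma_v(s))\le -\delta_1$, and the Riccati equation gives
\[
u'(s) \;=\; -u^{2}(s) - K(s) \;\ge\; \delta_1 - u^{2}(s).
\]

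Next, I would argue by contradiction with the explicit constant
\[
c \;:=\; \min\bigl\{\tfrac{1}{2}\sqrt{\delta_1},\ \tfrac{3}{4}\delta_1\tau\bigr\} \;>\; 0.
\]
Suppose some $t\ge 0$ with $g^{t}v\in W_2$ satisfied $u(t)>-c$. Combined with $u\le 0$, this places $u(t)\in(-c,0]\subseteq[-\tfrac{1}{2}\sqrt{\delta_1},0]$, and in particular $u^{2}(t)\le\delta_1/4$. As long as $u(s)$ remains in $[-\tfrac{1}{2}\sqrt{\delta_1},0]$, the Riccati inequality forces $u'(s)\ge 3\delta_1/4>0$, so $u$ is strictly increasing there and cannot exit through the lower boundary. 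A direct integration then produces some $s^{*}\in\bigl(t,\,t+\tfrac{4c}{3\delta_1}\bigr]\subseteq(t,t+\tau]$ at which $u(s^{*})=0$; but $u'(s^{*})\ge\delta_1>0$ immediately pushes $u$ strictly above $0$ just after $s^{*}$, contradicting the non-positivity of $u$. Therefore $u(t)\le -c$, and since $c$ depends only on $\delta_1$ and $\tau$ (hence only on $W_1,W_2$), the bound is uniform in $t\ge 0$.

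The only step that requires real care is the geometric one converting the spatial separation $\sigma$ into the temporal window $\tau$, for which the unit-speed property of the geodesic flow in the Sasaki metric is the key ingredient. After that, the proof reduces to a short one-dimensional comparison argument for the Riccati equation, driven entirely by the uniform negative curvature bound on $W_1$ and the sign constraint on $u$. No further input from the recurrence hypothesis on $v$ or from Pesin theory is needed at this stage; those enter only in the surrounding argument that will apply this lemma.
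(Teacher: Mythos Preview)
Your proposal is correct and follows essentially the same approach as the paper: both arguments convert the spatial separation $\sigma$ between $\partial W_1$ and $\partial W_2$ into a temporal window during which $K<-\delta_1$, then use the Riccati equation together with the sign constraint $u\le 0$ from Proposition~\ref{subspace}(8) to force $u$ strictly positive, a contradiction. The only cosmetic difference is that the paper argues by taking a sequence $t_i$ with $u(t_i)\to 0$, whereas you fix an explicit constant $c=\min\{\tfrac12\sqrt{\delta_1},\tfrac34\delta_1\tau\}$ up front and also make explicit the unit-speed property of the geodesic flow in the Sasaki metric that the paper invokes implicitly when asserting $s_{i,2}-t_i>\sigma$.
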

\begin{proof}[Proof of Lemma \ref{negative}]
We prove the lemma by contradiction. Assume this is not true. Then there exists a sequence of $t_i \geq 0$ with $g^{t_i}v\in W_2$, but $u(t_i)\to 0$ as $i\to \infty$.
There exist $s_{i,1}<t_i<s_{i,2}$ such that $g^{s_{i,1}}v, g^{s_{i,2}}v\in \partial W_1$ and $g^tv\in W_1$ for any $s_{i,1}<t<s_{i,2}$.
In fact, $s_{i,1}+\sigma < t_i < s_{i,2}-\sigma$.\\

Recall the Riccati equation $u'(t)+u^2(t)+K(t)=0$. If $i$ is large enough, then $u'(t_i)=-u^2(t_i)-K(t_i)>\delta_1>0$.
We claim that $u(t)$ is strictly increasing in the interval $(t_i, s_{i,2})$.
Indeed, if not, there is a smallest number $s_i\in (t_i, s_{i,2})$ such that $u'(s_i)=0$.
Then $u(s_i)>u(t_i)$, since $u'(t)>0$ for all $t\in (t_i, s_i)$. Therefore $u'(s_i)=-u^2(s_i)-K(s_i)>\delta_1>0$, which is a contradiction.\\

It follows that $u'(t)=-u^2(t)-K(t)>\delta_1>0$ for all $t\in (t_i, s_{i,2})$. Thus
\begin{equation*}
\begin{aligned}
u(s_{i,2})&=u(t_i)+\int_{t_i}^{s_{i,2}}u'(t)dt\\
&>u(t_i)+\delta_1(s_{i,2}-t_i)>u(t_i)+\delta_1\sigma.
\end{aligned}
\end{equation*}
If $i$ is large enough, then $u(t_i)$ is close enough to $0$, and hence $u(s_{i,2})>0$. This contradicts to the fact that $u(t)\leq 0, \forall t\in \mathbb{R}$.
\end{proof}
\vspace{.1cm}

Let us go on with the proof of Theorem \ref{regular}. By Proposition \ref{lyapunov}, Lemma \ref{negative} and \eqref{e:frequency}, one has \begin{equation*}
\begin{aligned}
\chi(v, \xi)=\limsup_{T\to \infty}\frac{1}{T}\int_0^Tu(t)dt\leq \limsup_{T\to \infty}\frac{1}{T}\cdot T_{W_2}(v)\cdot (-c)<0
\end{aligned}
\end{equation*}
where $\xi\in E^+(v)$. This contradicts to $v\in (\Delta^+)^c$. Thus $K(\gamma_v(t))\geq 0$ for all $t\geq 0$.
Analogously, we can prove that $K(\gamma_v(t))\geq 0$ for all $t\leq 0$. Thus $$K(\gamma_v(t))\geq 0, \forall\ t\in \mathbb{R}.$$
\vspace{-.2cm}

Now recall the Riccati equation $u'(t)+u^2(t)+K(t)=0$ again. Since $K(t)\geq 0$ along $\gamma_v(t)$, we have $u'(t)\leq 0$ for all $t\in \mathbb{R}$. We have the following three possibilities:
\begin{enumerate}
  \item $\lim_{t\to \infty}u(t)=0$. Since $u(t)\leq 0$ and $u'(t)\leq 0$ for all $t\in \mathbb{R}$, we must have $u(t)\equiv 0, \forall t\in \mathbb{R}$. Then $u'(t)\equiv 0, \forall t\in \mathbb{R}$. It follows from the Riccati equation that $K(t)\equiv 0, \forall t\in \mathbb{R}$.
  \item $\lim_{t\to \infty}u(t)=-d<0$ for some $d>0$. Then $$u'(t)=-u^2(t)-K(t)\leq -u^2(t)<0, \ \forall t\in \mathbb{R}.$$ This contradicts to the fact that $\lim_{t\to \infty}u'(t)=0$.
  \item $\lim_{t\to \infty}u(t)=-\infty$. Since $J(t)$ is a stable Jacobi field along $\gamma_v(t)$, we have $|u(t)|=|j'(t)/j(t)|\leq a$. We also arrive at a contradiction.
\end{enumerate}
In summary, we must have $K(\gamma_v(t))\equiv 0,\ \forall t\in \mathbb{R}$. This contradiction to our assumption $v\in\Lambda^c$. Therefore $\Lambda^c\cap(\Delta^+)^c\cap\Gamma=\emptyset$. The case $v\in (\Delta^-)^c$ can be delt with similarly and leads to the same result. Based on the discussion in the above, we can conclude that $\Lambda^c\cap\Delta^c\cap\Gamma=\emptyset$. Since $\Gamma$ is a full measure set, we immediately know that $$\nu(\Lambda^c\cap\Delta^c)=0.$$ We are done with the proof of Theorem \ref{regular}.
\end{proof}
\vspace{.5cm}

\section{Flat geodesics}\label{section 4}

\subsection{Flat strips are closed}

A flat strip means a totally geodesic isometric imbedding $r:\mathbb{R}\times [0,c]\rightarrow \widetilde{M}$, where $\mathbb{R}\times [0,c]$ is a strip in a Euclidean plane. The projection of a flat strip from $\widetilde M$ to $M$ is also called a flat strip. We have the following flat strip lemma:
\vspace{.1cm}

\begin{lemma}[Cf. \cite{OS}] \label{flat strip lemma}
If two distinct geodesics $\widetilde{\alpha}$ and $\widetilde{\beta}$ satisfy $d(\widetilde{\alpha}(t), \widetilde{\beta}(t))\leq C$ for some $C>0$ and $\forall t\in \mathbb{R}$, then they are the boundary curves of a flat strip in $\widetilde{M}$.
\end{lemma}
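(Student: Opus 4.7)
The plan is to produce a non-trivial element $\xi \in E^+(v) \cap E^-(v)$ along $\widetilde{\alpha}$ (with $v := \widetilde{\alpha}'(0)$) and then to use Proposition \ref{subspace}(8) together with the Riccati equation \eqref{e:ricatti} to conclude that $\widetilde{\alpha}$ is a flat geodesic; by symmetry $\widetilde{\beta}$ will be flat as well, and the parallel Jacobi field arising from $\xi$ will give the desired flat strip.

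First, the bound $d(\widetilde{\alpha}(t), \widetilde{\beta}(t)) \leq C$ on all of $\mathbb{R}$ makes $\widetilde{\alpha}$ and $\widetilde{\beta}$ asymptotic in both time directions, and hence they share both ideal endpoints: $\widetilde{\alpha}(\pm\infty) = \widetilde{\beta}(\pm\infty)$ in $\widetilde{M}(\infty)$. The main construction is a perpendicular Jacobi field $Y$ along $\widetilde{\alpha}$ whose norm $\|Y(t)\|$ is bounded on all of $\mathbb{R}$. I would build $Y$ via a geodesic variation interpolating $\widetilde{\alpha}$ and $\widetilde{\beta}$: using the Busemann function $b$ at the common forward ideal point, whose integral curves in the no focal points setting are geodesics asymptotic to that point, choose a one-parameter family $\widetilde{\gamma}_s$ of such asymptotic geodesics with $\widetilde{\gamma}_0 = \widetilde{\alpha}$ and $\widetilde{\gamma}_1 = \widetilde{\beta}$ (after reparametrising $\widetilde{\beta}$ so that $b \circ \widetilde{\beta} = b \circ \widetilde{\alpha}$), and set $Y(t) := \partial_s \widetilde{\gamma}_s(t)|_{s=0}$.

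Let $\xi \in T_v S\widetilde{M}$ be the element of the second tangent space lifting $Y$. By Proposition \ref{subspace}(7), $\xi \in E^+(v)$ if and only if $\|Y(t)\|$ is bounded for $t > 0$, and $\xi \in E^-(v)$ if and only if $\|Y(t)\|$ is bounded for $t < 0$; both hold by the construction, so $\xi \in E^+(v) \cap E^-(v)$ is non-trivial. Proposition \ref{subspace}(8) then forces $\|Y(t)\|$ to be both non-increasing and non-decreasing in $t$, hence constant, say $\|Y(t)\| \equiv c > 0$. In the Fermi frame along $\widetilde{\alpha}$, writing $Y = j \cdot e_2$, we have $j \equiv c$ and so $u = j'/j \equiv 0$; the Riccati equation \eqref{e:ricatti} then forces $K(\widetilde{\alpha}(t)) \equiv 0$, so $\widetilde{\alpha}$ is a flat geodesic. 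By the same argument applied to $\widetilde{\beta}$, the geodesic $\widetilde{\beta}$ is also flat. Finally, the map $r(t, s) := \exp_{\widetilde{\alpha}(t)}(s \cdot Y(t)/c)$ for $(t, s) \in \mathbb{R} \times [0, c]$ is, by parallelism of $Y$ and the vanishing of $K$ along $\widetilde{\alpha}$, a totally geodesic isometric imbedding of the flat strip whose boundary curves are $\widetilde{\alpha}$ and $\widetilde{\beta}$.

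The main technical obstacle is justifying the construction of the family $\widetilde{\gamma}_s$ and the Jacobi field $Y$ under the weaker no focal points hypothesis. In nonpositive curvature, Busemann functions are $C^2$ and their gradient flows depend smoothly on the base point, so $\widetilde{\gamma}_s$ is automatically smooth in $s$ and $Y$ is a bona fide Jacobi field along $\widetilde{\alpha}$. Under only no focal points, Busemann functions are $C^1$ (after O'Sullivan and Pesin) and $\nabla b$ is merely continuous, so extracting the Jacobi equation for $Y = \partial_s \widetilde{\gamma}_s|_{s = 0}$ requires a careful argument: one option is to approximate by smoother variations, another is to exploit the characterisation in Proposition \ref{subspace}(7) directly, verifying boundedness of $\|Y(t)\|$ in terms of $d(\widetilde{\alpha}(t), \widetilde{\beta}(t))$ without ever differentiating $\widetilde{\gamma}_s$ in $s$.
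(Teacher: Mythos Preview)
The paper does not prove this lemma; it is quoted from O'Sullivan \cite{OS} (and Green \cite{Gre} in dimension two) as a known result, so there is no proof in the paper to compare against.

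On its own merits, your argument has a genuine gap in the final step. Granting the construction of a bounded perpendicular Jacobi field $Y$ along $\widetilde{\alpha}$, your use of Proposition~\ref{subspace}(7)--(8) does show that $Y$ is parallel and that $K \equiv 0$ along $\widetilde{\alpha}$. But this is a first-order statement at $s = 0$ and does not make $r(t,s) = \exp_{\widetilde{\alpha}(t)}\bigl(s\,Y(t)/c\bigr)$ a totally geodesic isometric embedding: the curves $t \mapsto r(t,s)$ for $s > 0$ need not be geodesics, the curvature need not vanish off $\widetilde{\alpha}$, and nothing identifies $r(\cdot, c)$ with $\widetilde{\beta}$. (Any surface that is flat along a single geodesic and curved nearby carries the parallel field $e_2(t)$ along that geodesic without producing a flat strip.) To close the gap one would have to run the same argument along \emph{every} $\widetilde{\gamma}_s$ in the interpolating family, obtain a parallel field $Y_s$ along each, deduce $K \equiv 0$ throughout the swept region, and then verify that $(s,t) \mapsto \widetilde{\gamma}_s(t)$ is an isometric embedding of a Euclidean strip with boundary curves $\widetilde{\alpha}$ and $\widetilde{\beta}$; in particular one must show $\|Y_s(t)\|$ is independent of $s$ as well as of $t$, which does not follow from what you wrote.

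There is also the regularity issue you flag: with Busemann functions only $C^1$ in the no-focal-points setting, it is not clear that $\partial_s \widetilde{\gamma}_s|_{s=0}$ is a genuine Jacobi field; and even granting that, the boundedness of the infinitesimal quantity $\|Y(t)\|$ for $t < 0$ is not the same thing as the bound $d(\widetilde{\alpha}(t), \widetilde{\beta}(t)) \leq C$, which concerns $s = 1$. O'Sullivan's original proof sidesteps these difficulties by working directly with distance functions and second-variation inequalities rather than through a smooth interpolating family.
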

\vspace{.1cm}

The flat strip lemma for nonpositively curved manifolds was established by Eberlein and O'Neill in \cite{EO}. The above flat strip lemma for manifolds without focal points is due to Green in dimension two (cf.~\cite{Gre}), and O'Sullivan (cf.~\cite{OS}) in arbitrary dimensions. The following lemma is also useful in our work.
\vspace{.1cm}

\begin{lemma}[Cf. Lemma 3.6 in \cite{Wu}] \label{boundary}
If $w' \in W^s(w)\subset SM$ and
$$\lim _{t\to +\infty} d(\gamma_w(t), \gamma_{w'}(t))=\delta >0,$$
then $\gamma_w(t)$ and $\gamma_{w'}(t)$ converge to the boundaries of a flat strip of width $\delta$.
\end{lemma}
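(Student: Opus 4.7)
My plan is to reduce the statement to the flat strip Lemma \ref{flat strip lemma} by a compactness argument on the universal cover. First, I would lift $w$ and $w'$ to vectors $\widetilde w,\widetilde w'\in S\widetilde M$ with $\widetilde w'\in \widetilde W^s(\widetilde w)$, so that the lifted geodesics $\widetilde\gamma_{\widetilde w}$ and $\widetilde\gamma_{\widetilde w'}$ are forward asymptotic on $\widetilde M$. For $t$ sufficiently large, the $M$-distance $d(\gamma_w(t),\gamma_{w'}(t))$ coincides with $d_{\widetilde M}(\widetilde\gamma_{\widetilde w}(t),\widetilde\gamma_{\widetilde w'}(t))$: asymptoticity keeps this particular pair of lifts within a uniformly bounded distance, while any other deck translate of $\widetilde\gamma_{\widetilde w'}$ must diverge from $\widetilde\gamma_{\widetilde w}$. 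Hence
\[\lim_{t\to+\infty}d_{\widetilde M}(\widetilde\gamma_{\widetilde w}(t),\widetilde\gamma_{\widetilde w'}(t))=\delta.\]

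Next, I would fix an arbitrary sequence $t_n\to+\infty$ together with deck transformations $\phi_n\in\pi_1(M)$ such that $\phi_n(\widetilde\gamma_{\widetilde w}(t_n))$ lies in a fixed compact fundamental domain; the $\widetilde M$-distance bound above then also forces $\phi_n(\widetilde\gamma_{\widetilde w'}(t_n))$ into a compact set. Setting
\[\widetilde\alpha_n(s):=\phi_n\widetilde\gamma_{\widetilde w}(t_n+s),\qquad \widetilde\beta_n(s):=\phi_n\widetilde\gamma_{\widetilde w'}(t_n+s),\qquad s\in\mathbb R,\]
Arzel\`a--Ascoli applied to the geodesic equation yields, along a subsequence, locally uniform $C^1$-limits $\widetilde\alpha,\widetilde\beta$, which are complete geodesics on $\widetilde M$. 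Since each $\phi_n$ is an isometry,
\[d(\widetilde\alpha_n(s),\widetilde\beta_n(s))=d_{\widetilde M}(\widetilde\gamma_{\widetilde w}(t_n+s),\widetilde\gamma_{\widetilde w'}(t_n+s)),\]
and because $t_n+s\to+\infty$ for every fixed $s\in\mathbb R$, the right-hand side tends to $\delta$. Passing to the limit gives $d(\widetilde\alpha(s),\widetilde\beta(s))=\delta$ for all $s\in\mathbb R$.

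This uniform bound on the entire real line---not merely on the forward half-line---is exactly the hypothesis of Lemma \ref{flat strip lemma}, which then forces $\widetilde\alpha$ and $\widetilde\beta$ to be the two boundary geodesics of a flat strip of width $\delta$ in $\widetilde M$. Since the sequence $t_n$ was arbitrary, this is the sense in which $\gamma_w$ and $\gamma_{w'}$ converge to the boundaries of a flat strip of width $\delta$. I expect the main difficulty to lie in the first step: identifying the $M$-distance with the $\widetilde M$-distance between the correct lifts (which must be done carefully if $\delta$ is large compared to the injectivity radius) and in confirming that the limiting distance is genuinely $\delta$ rather than some smaller value from distance collapse in the passage to the limit. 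The nonincreasing norm of stable Jacobi fields given by Proposition \ref{subspace}(8) is the tool that rules out such collapse, since it prevents the separation of the pair $(\widetilde\alpha_n,\widetilde\beta_n)$ from dropping on $[0,\infty)$ in the limit.
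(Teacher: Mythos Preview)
The paper does not prove this lemma; it is simply quoted as Lemma 3.6 of \cite{Wu} and used as a black box. Your outline---lift to $\widetilde M$, pull back along a sequence $t_n\to\infty$ by deck transformations into a fixed fundamental domain, extract limit geodesics $\widetilde\alpha,\widetilde\beta$ by compactness, observe $d(\widetilde\alpha(s),\widetilde\beta(s))=\delta$ for all $s\in\mathbb R$, and invoke the flat strip Lemma~\ref{flat strip lemma}---is the standard and correct argument.

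Two small remarks. First, since the paper's applications of the lemma (in the proofs of Theorems~\ref{A} and~\ref{B}) already take place on $\widetilde M$, the issue you flag about matching the $M$-distance to the $\widetilde M$-distance is harmless in context; one may simply read the hypothesis as a statement about the lifted geodesics. Second, to get width \emph{exactly} $\delta$ rather than merely the existence of a flat strip, you should note that the condition $\widetilde w'\in\widetilde W^s(\widetilde w)$ (same stable horocycle) is preserved by the isometries $\phi_n$ and passes to the limit, so $\widetilde\alpha(s)$ and $\widetilde\beta(s)$ lie on a common horocycle for each $s$; in the Euclidean strip this forces the perpendicular distance to equal $d(\widetilde\alpha(s),\widetilde\beta(s))=\delta$.
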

\vspace{.1cm}

In the view of Conjecture \ref{conjecture}, our aim is to show that all flat geodesics are closed. An important progress was made by Cao and Xavier on the flat geodesics inside flat strips on manifolds of nonpositive curvature, in an unpublished preprint \cite{CX}. We state it in the following theorem.
\vspace{.1cm}

\begin{theorem}[Cao-Xavier, cf.~\cite{CX}]
Let $M$ be a smooth, connected and closed surface with genus $\mathfrak{g} \geq 2$. Suppose that $M$ has nonpositive curvature. Then any flat strip on $M$ consists of closed geodesics in the same homotopy type.
\end{theorem}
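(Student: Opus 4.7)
The plan is to exhibit, for any flat strip on $M$, a nontrivial deck transformation $\phi\in\Gamma$ that preserves the lifted flat strip as a set; once $\phi$ is available, every horizontal geodesic of the strip is forced to be an axis of $\phi$, so each projects to a closed geodesic representing the single conjugacy class $[\phi]$. First I would work in the universal cover: lift the flat strip to an isometric embedding $r\colon\mathbb{R}\times[0,c]\to\widetilde{M}$ with boundary geodesics $\widetilde{\alpha}(t)=r(t,0)$ and $\widetilde{\beta}(t)=r(t,c)$ and intermediate geodesics $\widetilde{\gamma}_s(t)=r(t,s)$. All these horizontal geodesics share a common pair of endpoints $p,q\in\widetilde{M}(\infty)$. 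Under the nonpositive curvature hypothesis, the parallel set $\widetilde{P}$ of $\widetilde{\alpha}$, namely the union of all complete geodesics asymptotic to $\widetilde{\alpha}$ in both directions, is a convex totally geodesic submanifold isometric to a maximal Euclidean strip $\mathbb{R}\times[0,W]$ with $W\geq c$, and the original flat strip sits inside $\widetilde{P}$.

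The crucial step is to show that the stabilizer $\Gamma_{\widetilde{P}}=\{\phi\in\Gamma\colon\phi\widetilde{P}=\widetilde{P}\}$ is nontrivial. On one hand $\widetilde{P}$ has infinite area, whereas $\pi(\widetilde{P})\subset M$ has area at most $\mathrm{area}(M)$, so infinitely many disjoint unit boxes of $\widetilde{P}$ are forced to project into a common compact region of $M$. Passing to a subsequence this produces deck transformations $\phi_n\in\Gamma$ and basepoints $x_n\in\widetilde{P}$ such that $\phi_n x_n$ stays in a compact set of $\widetilde{M}$ and the associated unit tangent vectors to $\widetilde{\alpha}$ converge. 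The resulting limit geodesic is parallel to $\widetilde{\alpha}$; by the flat strip lemma and maximality of $\widetilde{P}$, it lies inside $\widetilde{P}$. Discreteness of $\Gamma$ combined with uniqueness of the parallel foliation of $\widetilde{P}$ then forces $\phi_n\widetilde{P}=\widetilde{P}$ for all sufficiently large $n$.

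Once a nontrivial $\phi\in\Gamma_{\widetilde{P}}$ is in hand, I analyze its action on $\mathbb{R}\times[0,W]$ through the product isometry. The only orientation-preserving Euclidean isometries preserving this strip are translations $(t,s)\mapsto(t+t_0,s)$ and the reversed maps $(t,s)\mapsto(-t+t_0,W-s)$; the latter fixes the point $(t_0/2,W/2)$ and is excluded because $\pi_1(M)$ is torsion-free for a closed aspherical surface of genus $\mathfrak{g}\geq 2$. Thus $\phi(t,s)=(t+t_0,s)$ with $t_0\neq 0$, and every horizontal line $\widetilde{\gamma}_{s_0}$ for $s_0\in[0,c]$ satisfies $\phi\widetilde{\gamma}_{s_0}(t)=\widetilde{\gamma}_{s_0}(t+t_0)$. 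Hence each $\widetilde{\gamma}_{s_0}$ is an axis of $\phi$, and its projection $\gamma_{s_0}$ is a closed geodesic of period $|t_0|$ in the free homotopy class $[\phi]$, yielding the theorem.

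The main obstacle is the middle step. The easy consequence of compactness is only that some $\phi\in\Gamma$ sends a point of $\widetilde{P}$ to another point of $\widetilde{P}$, but such a $\phi$ need not preserve $\widetilde{P}$ as a set: the geodesic $\phi\widetilde{\alpha}$ could in principle cross out of $\widetilde{P}$. Bridging this gap requires careful exploitation of the maximality and convexity of the parallel set, the flat strip lemma, and the discreteness of $\Gamma$, and it is precisely here that the nonpositive (rather than merely no focal) curvature hypothesis genuinely contributes to making the product decomposition of $\widetilde{P}$ rigid enough for the limiting argument to close up.
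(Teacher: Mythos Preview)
Your endgame is correct: once a nontrivial $\phi\in\Gamma$ preserves the maximal lifted strip $\widetilde{P}\cong\mathbb{R}\times[0,W]$, the classification of fixed-point-free orientation-preserving isometries of a Euclidean strip forces $\phi$ to be a horizontal translation, and every horizontal geodesic becomes an axis of $\phi$. The whole theorem, however, is the production of such a $\phi$, and your sketch of that step does not close. The sentence ``the resulting limit geodesic is parallel to $\widetilde{\alpha}$'' is the unjustified leap: recurrence in $SM$ yields $\phi_n\in\Gamma$ with $d\phi_n(\widetilde{\alpha}'(t_n))\to \widetilde v_\infty$, hence $\phi_n\widetilde{P}$ converging to a flat strip $\widetilde{P}_\infty$ of the same width $W$ bounded by $\widetilde\gamma_{\widetilde v_\infty}$, but nothing ties $\widetilde{P}_\infty$ to $\widetilde{P}$; it is simply some other maximal flat strip in $\widetilde{M}$. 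Maximality of $\widetilde{P}$ only tells you which geodesics bi-asymptotic to $\widetilde\alpha$ lie in $\widetilde{P}$ --- it says nothing about $\widetilde\gamma_{\widetilde v_\infty}$, which has no a priori asymptotic relation to $\widetilde\alpha$. Discreteness of $\Gamma$ does not help either, since you are comparing each $\phi_n\widetilde{P}$ to a limit object rather than to another $\phi_m\widetilde{P}$. You correctly flag this as ``the main obstacle,'' but the remedy you propose does not resolve it.

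The paper (following Cao--Xavier, and in fact carrying the argument through under the weaker no-focal-points hypothesis) fills exactly this gap by a different, concrete mechanism: a \emph{widening} argument. One first notes that flat-strip widths in $\widetilde{M}$ are bounded above, since a strip wider than $2\,\mathrm{diam}(M)$ would contain a fundamental domain and force $M$ to be a flat torus. Now run recurrence as you do; if no $\phi_n$ stabilizes the strip, the translated strips $F_j:=\phi_j\widetilde{P}$ meet the limit strip $\widetilde{G}_0$ transversally at angles $\alpha_j\to 0$. Two flat strips of width $\epsilon_0$ crossing at angle $\alpha_j$ contain in $\overline{F_j\setminus\widetilde{G}_0}$ a Euclidean rectangle of width $\epsilon_0/8$ and length $L_j\gtrsim \epsilon_0/\sin\alpha_j\to\infty$ attached along the boundary of $\widetilde{G}_0$; a further limit along the midpoints of these rectangles produces a genuine flat strip of width $\tfrac{9}{8}\epsilon_0$. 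Iterating contradicts the width bound. Hence the transversal case cannot persist, some translate must coincide with the limit strip, and the strip is periodic. This widening trick --- not convexity of the parallel set --- is the idea your argument is missing, and incidentally it shows that nonpositive curvature is not where the leverage lies: the same proof works verbatim for surfaces without focal points.
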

\vspace{.1cm}

Based on the flat strip lemma \ref{flat strip lemma}, we generalize the above result to the manifolds without focal points. We adapt the argument of Cao-Xavier to surfaces without focal points.
\vspace{.1cm}

\begin{theorem} \label{flat strip closed}
Let $M$ be a smooth, connected and closed surface with genus $\mathfrak{g} \geq 2$. Suppose that $M$ has no focal points. Then any flat strip on $M$ consists of closed geodesics in the same homotopy type.
\end{theorem}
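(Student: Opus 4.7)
The plan is to adapt the Cao--Xavier argument from the nonpositive-curvature setting, substituting the flat strip lemma (Lemma \ref{flat strip lemma}) and the no-focal-points monotonicity of perpendicular Jacobi fields in place of strict convexity of distance functions. My goal is to produce an axial element $\phi\in\Gamma\cong\pi_1(M)$ whose axis is parallel to the geodesics of the given strip; the strip then consists of $\phi^k$-invariant geodesics, all projecting to closed geodesics in $M$ of a common free homotopy class.

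I would lift to $\widetilde{M}$ and fix $\widetilde{S}=r(\mathbb{R}\times[0,c])$ with boundary geodesics $\widetilde{\alpha}(t)=r(t,0)$ and $\widetilde{\beta}(t)=r(t,c)$. Since $d(\widetilde{\alpha}(t),\widetilde{\beta}(t))\equiv c$ for all $t$, they share the same pair of endpoints $p_{+},p_{-}\in\widetilde{M}(\infty)$. Compactness of $SM$ ensures that $d\pi(\widetilde{\alpha}'(0))$ is a recurrent vector, so there exist $t_n\to+\infty$ and deck transformations $\phi_n\in\Gamma$ with $d\phi_n(\widetilde{\alpha}'(t_n))\to\widetilde{\alpha}'(0)$ in $S\widetilde{M}$. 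Continuous dependence on initial conditions then makes the geodesics $\phi_n\widetilde{\alpha}(\,\cdot\,+t_n)$ converge to $\widetilde{\alpha}$ uniformly on compacts, so $\phi_n\widetilde{\alpha}$ stays at bounded distance from $\widetilde{\alpha}$ for all large $n$.

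Applying the flat strip lemma to $\widetilde{\alpha}$ and $\phi_n\widetilde{\alpha}$ yields a dichotomy. Either $\phi_n\widetilde{\alpha}=\widetilde{\alpha}$ for some $n$, and then $\phi_n$ is axial with axis $\widetilde{\alpha}$ of translation length $t_n$; or $\phi_n\widetilde{\alpha}\neq\widetilde{\alpha}$ for all $n$, and each such pair bounds a flat strip of width $c_n\to 0$. In the latter case $\phi_n\widetilde{\alpha}$ must have the same endpoints $p_{\pm}$ as $\widetilde{\alpha}$ (parallel boundaries of a flat strip are biasymptotic), whence $\phi_n(p_{\pm})=p_{\pm}$, with orientation preserved since $t_n>0$. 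Taking the union of all geodesics in $\widetilde{M}$ joining $p_{-}$ to $p_{+}$ produces a maximal flat region $\widetilde{F}\supset\widetilde{S}$ isometric to a Euclidean strip $\mathbb{R}\times[0,C]$; the hypothesis $\mathfrak{g}\geq 2$ forbids $C=\infty$. Since $\phi_n$ preserves $\widetilde{F}$ and both ends, its restriction to $\widetilde{F}$ is a Euclidean translation or glide reflection, hence axial with axis parallel to $\widetilde{\alpha}$.

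Once an axial $\phi\in\Gamma$ with axis parallel to the strip is obtained (passing to $\phi^2$ if a glide reflection occurs), every geodesic in $\widetilde{S}$ is set-wise invariant and thus projects to a closed geodesic representing the conjugacy class $[\phi]\in\pi_1(M)$, giving the claimed common homotopy type. The main obstacle I anticipate is the second horn of the dichotomy: without focal points only monotonicity of $\|J(t)\|$ is available, not the strict convexity of distance functions that Cao--Xavier exploit to force $\phi_n\widetilde{\alpha}=\widetilde{\alpha}$ directly. The appropriate substitute seems to be the combination of the flat strip lemma with the fact that on a closed surface of genus $\mathfrak{g}\geq 2$ no unboundedly wide flat region can exist in $\widetilde{M}$, which together pin $\phi_n$ down to an axial isometry of the well-defined Euclidean region containing $\widetilde{S}$.
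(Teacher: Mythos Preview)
Your dichotomy is incomplete, and this is where the argument breaks. From $d\phi_n(\widetilde{\alpha}'(t_n))\to\widetilde{\alpha}'(0)$ you correctly get $\phi_n\widetilde{\alpha}(\,\cdot+t_n)\to\widetilde{\alpha}$ uniformly on compact sets, but the ``so'' that follows is a non sequitur: closeness on compacts does \emph{not} give a uniform bound on $d(\widetilde{\alpha}(t),\phi_n\widetilde{\alpha}(t))$ for all $t\in\mathbb{R}$, and without that bound the flat strip lemma simply does not apply to the pair. The case you have omitted is the generic one: $\phi_n\widetilde{\alpha}$ meets $\widetilde{\alpha}$ transversally at a small angle $\theta_n\to 0$. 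Then the two geodesics share no endpoint at infinity, diverge in both directions, and neither horn of your dichotomy is available. (There is also a minor slip: compactness of $SM$ does not make an arbitrary vector recurrent; it only furnishes an accumulation point of the forward orbit. The paper works with that limit vector rather than with $\widetilde\alpha'(0)$ itself.)

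The paper's proof is designed precisely to exploit this transversal case rather than avoid it. When the lifted strip $F_j$ of width $\epsilon_0$ meets the limiting strip $\widetilde G_0$ of the same width at angle $\alpha_j\to 0$, one works in the flat union $F_j\cup\widetilde G_0$ and attaches a Euclidean rectangle of height $\epsilon_0/8$ and length $\asymp 1/\sin\alpha_j\to\infty$ along the boundary of $\widetilde G_0$; passing to a limit of the tangent vectors along these rectangles produces a genuine flat strip of width $\tfrac{9}{8}\epsilon_0$. Iterating this widening and invoking the bound on strip widths that you yourself note (genus $\ge 2$) forces termination at a periodic strip. Your maximal-flat-region argument handles the biasymptotic case cleanly, but without a mechanism like this widening step you have no way to dispose of---or profit from---the transversal intersections.
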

\begin{proof}
Observe that in the universal cover $\widetilde{M}$, there exists an upper bound for the width of all the flat strips. Indeed, let $D > \text{diam}(M)$. Then a flat strip of width greater than $2D$ contains a fundamental domain in $\widetilde{M}$. Hence $M$ must be a flat torus. This contradicts to the fact that $M$ has genus $\mathfrak{g} \geq 2$.\\

Let $\widetilde{G}: (-\infty, \infty)\times [0, \epsilon_0]\to \widetilde{M}$ be a flat strip in $\widetilde{M}$, and $G=p(\widetilde{G})$ where $p: \widetilde{M}\to M$ is the universal covering map. Consider a sequence of unit vectors $v_i\in SM$ where $v_i=\frac{\partial G}{\partial t}(i, \epsilon_0),\ i=1,2, \cdots$. Since $SM$ is compact, there exists a subsequence of $\{v_i\}_{i=1}^\infty$ which converges to a unit vector $v_0\in SM$. For simplicity of notations, we still let $\{v_i\}_{i=1}^\infty$ denote the subsequence. Recall that $\pi: SM \to M$ is the canonical projection. Let $x_i$ denote the foot point of $v_i$, i.e. $x_i=\pi(v_i),\ i=0,1,2, \cdots$.\\

Let $\delta$ be the injectivity radius of $M$. For sufficiently large $j$, we may assume $d(v_j, v_0)<\delta/2$. We choose a preimage $\widetilde{x}_0\in p^{-1}(x_0)$ such that $\widetilde{x}_0$ is the nearest point to the flat strip $\widetilde{G}$ in $p^{-1}(x_0)$. Then $p|_{B(\widetilde{x}_0, \delta)}: B(\widetilde{x}_0, \delta) \to B(x_0, \delta)$ and $\Psi:=dp|_{SB(\widetilde{x}_0, \delta)}: SB(\widetilde{x}_0, \delta) \to SB(x_0, \delta)$ are both isometries.\\

Denote $w_i=\Psi^{-1}(v_i)\in SB(\widetilde{x}_0, \delta),\ i=0,1,2, \cdots$. Let $F_j: (-\infty, \infty)\times [0, \epsilon_0]\to \widetilde{M}$ denote the lifted flat strip tangent to $w_j,\ j=1,2, \cdots$. Then the limit of $F_j$ is a flat strip $\widetilde{G}_0: (-\infty, \infty)\times [0, \epsilon_0]\to \widetilde{M}$ tangent to $w_0$. There are two distinct cases.
\begin{enumerate}
  \item \textbf{$p\circ F_{j_0}$ is periodic for some $j_0\in \mathbb{N}$.}
  As $p\circ F_{j_0}$ is the flat strip tangent to $v_{j_0}$, it coincides with $G$. Hence $G$ is periodic.

  \item \textbf{$p\circ F_{j}$ is not periodic for any $j\in \mathbb{N}$.}
  Then $F_j$ and $\widetilde{G}_0$ are a pair of transversal flat strips of the same width $\epsilon_0$. Suppose that they intersect at $q_j$ with angle $\alpha_j$, where $q_j\in \partial F_j\cap \widetilde{G}_0((-\infty, \infty)\times \{\epsilon_0\})$, $j=1, 2, \cdots$. Because $F_j\cup \widetilde{G}_0$ has curvature $0$ everywhere, we can construct a rectangle $R_j=[0, L_j]\times (0, \epsilon_0/8]$ contained in the closure of $F_j-\widetilde{G}_0$ such that
  \begin{itemize}
    \item one side of $R_j$, $[0, L_j]\times \{0\}$ is contained in the line $\widetilde{G}_0((-\infty, \infty)\times \{\epsilon_0\})$,
    \item $L_j \geq \frac{\epsilon_0}{16\sin \alpha_j}$.
  \end{itemize}
  Attaching $R_j$ to $\widetilde{G}_0$, we obtain an isometric embedding $\widetilde{G}_0: [c_j, c_j+L_j]\times [0, \frac{9\epsilon_0}{8}]$ for some $c_j\in \mathbb{R}$. Let $\widetilde{u}_j$ be the unit vector tangent to $\widetilde{G}_0([c_j, c_j+L_j]\times \{0\})$ at the point $\widetilde{G}_0(c_j+L_j/2,0)$. Write $u_j=dp(\widetilde{u}_j)$ and suppose that a subsequence of $\{u_j\}$ converges to $u_0$. As $L_j \to \infty$ as $j\to \infty$, we know that there exists a flat strip tangent to $u_0$ of width $\frac{9\epsilon_0}{8}$. Hence there exits a flat strip $\widetilde{G}_1$ of width $\frac{9\epsilon_0}{8}$ in $\widetilde{M}$.
\end{enumerate}
\vspace{.1cm}

We are done if we arrive at the first case above. If we have the second case, we then repeat the argument for the new flat strip $\widetilde{G}_1$. But we cannot enlarge our flat strips by a factor $9/8$ again and again, as the width of the flat strips in $\widetilde{M}$ have an upper bound $2D$. Thus we must arrive at the first case at some step. It follows that $G$ is periodic.
\end{proof}
\vspace{.2cm}

\subsection{Expansivity}
The proof of Theorems \ref{A} and \ref{B} uses an argument based on the following \emph{expansivity} property of a vector $x\in SM$ not tangent to a flat strip. This argument will be used for several times later, in the proof of Theorems \ref{A}, \ref{B}, and Lemma \ref{flat closed geodesic}.
\vspace{.1cm}

\begin{definition}[Cf. Definition 3.2.11 in \cite{KH}]
We say $x\in SM$ has the expansivity property if there exists a small $\delta_0>0$, such that whenever $d(g^t(x), g^t(y))< \delta_0$, $\forall t\in \mathbb{R}$, then $y=g^{t_0}(x)$ for some $t_0$ with $|t_0|<\delta_0$.
\end{definition}
\vspace{.1cm}

The flat strip lemma (Lemma \ref{flat strip lemma}) for surfaces without focal points guarantees the expansivity property for a unit vector which is not tangent to a flat strip.
\vspace{.1cm}

\begin{lemma}\label{expansivity}
If $x\in SM$ is not tangent to a flat strip, then it has the expansivity property.
\end{lemma}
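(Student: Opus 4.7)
The plan is to apply the flat strip lemma (Lemma \ref{flat strip lemma}) in the universal cover $\widetilde{M}$. Suppose $x\in SM$ is not tangent to any flat strip. Fix $\delta_0 > 0$ smaller than the injectivity radius of $M$ (with further smallness as needed). Consider any $y\in SM$ satisfying $d(g^t(x), g^t(y)) < \delta_0$ for all $t\in \mathbb{R}$; the goal is to show $y = g^{t_0}(x)$ for some $|t_0| < \delta_0$.

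First, I would lift the picture to $S\widetilde{M}$. Fix a lift $\widetilde{x}$ of $x$ and the unique lift $\widetilde{y}$ of $y$ with $d(\pi \widetilde{x}, \pi \widetilde{y}) < \delta_0$, whose existence and uniqueness follow from $\delta_0$ being below the injectivity radius. Then I would verify by a continuity argument that the lifted geodesics stay uniformly close:
\[d(\widetilde{\gamma}_x(t),\widetilde{\gamma}_y(t)) < \delta_0 \qquad \text{for all } t\in \mathbb{R}.\]
The key point is that distinct preimages of any point of $M$ are separated in $\widetilde{M}$ by at least twice the injectivity radius, so the continuous path $t\mapsto \widetilde{\gamma}_y(t)$ cannot jump to a different preimage while the projected orbits remain $\delta_0$-close in $SM$.

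Next I would invoke the flat strip lemma. Since $\widetilde{\gamma}_x$ and $\widetilde{\gamma}_y$ remain within bounded distance $\delta_0$ for all time, they are either the same unparametrized geodesic in $\widetilde{M}$ or the two boundary curves of a flat strip there. In the latter case, projecting back by $p:\widetilde{M}\to M$ produces a flat strip in $M$ to which $x$ is tangent, contradicting our hypothesis. Hence $\widetilde{\gamma}_y$ is a reparametrization of $\widetilde{\gamma}_x$, so $\widetilde{y} = dg^{t_0}(\widetilde{x})$ for some $t_0\in \mathbb{R}$, whence $y = g^{t_0}(x)$. Finally, since $\widetilde{M}$ has no conjugate points (a consequence of having no focal points), geodesics in $\widetilde{M}$ are distance-minimizing, so
\[|t_0| = d(\widetilde{\gamma}_x(0), \widetilde{\gamma}_x(t_0)) = d(\pi \widetilde{x}, \pi \widetilde{y}) < \delta_0.\]

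I expect the main obstacle to be the bookkeeping in the first step---carefully verifying that the chosen lift $\widetilde{\gamma}_y$ remains in the $\delta_0$-ball around $\widetilde{\gamma}_x(t)$ for every $t$, rather than silently switching to a distant preimage of $\gamma_y(t)$ that happens to project near $\gamma_x(t)$ in $M$. Once the global closeness of the lifts is established, the rest is a clean application of Lemma \ref{flat strip lemma} and the minimizing property of geodesics on a no-focal-points universal cover.
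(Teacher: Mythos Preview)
Your proposal is correct and follows essentially the same route as the paper: lift the two orbits to $\widetilde{M}$ using that $\delta_0$ is below the injectivity radius, then invoke the flat strip lemma (Lemma~\ref{flat strip lemma}) to force a flat strip unless the two lifted geodesics coincide. If anything, your write-up is a bit more careful than the paper's, which argues by contradiction and jumps directly to the case $y\notin\mathcal{O}(x)$; you instead treat the dichotomy explicitly and verify the bound $|t_0|<\delta_0$ via the minimizing property of geodesics in $\widetilde{M}$.
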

\begin{proof}
We prove this lemma by contradiction. Assume the lemma does not hold. Then for an arbitrarily small $\epsilon >0$ less than the injectivity radius of $M$, there exists a point $y\in SM$ such that $y\notin \mathcal{O} (x)$ and $d(\gamma_x(t), \gamma_y(t))< \epsilon$, $\forall t\in \mathbb{R}$. By the choice of $\epsilon$, we can lift $\gamma_x(t)$ and $\gamma_y(t)$ to the universal cover $\widetilde{M}$ such that
$$d(\widetilde{\gamma}_x(t), \widetilde{\gamma}_y(t))< \epsilon,\ \ \forall t\in \mathbb{R}.$$
Thus by the flat strip lemma \ref{flat strip lemma}, $\widetilde{\gamma}_x(t)$ and $\widetilde{\gamma}_y(t)$ bound a flat strip. Hence $x$ is tangent to a flat strip, a contradiction.
\end{proof}
\vspace{.3cm}

\subsection{Area of ideal triangles}
Given $x,y,z \in \widetilde{M}(\infty)$, an ideal triangle with vertices $x, y, z$ means the region in $\widetilde{M}$ bounded by the three geodesics joining the vertices. In the case when at least one of $x, y, z$ is on $\widetilde{M}(\infty)$ (the other points can be inside $\widetilde{M}$), we also call the region bounded by the three geodesics an ideal triangle. The following theorem about the ideal triangle is proved in \cite{RR}.
\vspace{.1cm}

\begin{theorem}[Cf. Theorem 1 in \cite{RR}]\label{model}
Let $M$ be a $C^3$ compact surface having negative curvature at all points, except along a simple closed geodesic $\gamma(t)$ where the curvature is zero for every $t\in \mathbb{R}$. Let $N$ be a normal neighborhood of $\gamma$ and let $y: N\to \mathbb{R}$ be the distance $y(p)$ from a point $p\in N$ to $\gamma$. Suppose that on $N$ the Gaussian curvature $K$ satisfies
$$K(p)=-y(p)^\epsilon f(y(p))$$
where $\epsilon \geq 2$ and $f(y)$ is an analytic function with $f(0)=1$. Then the area of ideal triangle having $\gamma$ as an edge is infinite.
\end{theorem}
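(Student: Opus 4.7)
The plan is to work in Fermi coordinates along a lift $\widetilde{\gamma}$ of $\gamma$ in the universal cover $\widetilde{M}$ and show that the two non-$\widetilde{\gamma}$ edges of the ideal triangle peel away from $\widetilde{\gamma}$ so slowly that the enclosed area diverges. In a tubular neighborhood $\widetilde{N}$ of $\widetilde{\gamma}=\{y=0\}$, Fermi coordinates $(x,y)$ give $ds^{2} = G(x,y)^{2}\,dx^{2} + dy^{2}$ with $G(x,0)=1$ and $G_{y}(x,0)=0$ (since $\widetilde\gamma$ is geodesic), and Gauss's formula reduces to the scalar ODE $G_{yy}=-KG = y^{\epsilon}f(y)\,G$. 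Iterating this ODE with the given initial data and $f(0)=1$ yields the uniform (in $x$) expansion
\[
G(x,y)=1+\frac{y^{\epsilon+2}}{(\epsilon+1)(\epsilon+2)}+O\bigl(y^{\epsilon+3}\bigr),
\qquad
\frac{G_{y}(x,y)}{G(x,y)}=\frac{y^{\epsilon+1}}{\epsilon+1}+O\bigl(y^{\epsilon+2}\bigr),
\]
where the leading coefficients are $x$-independent and the error is uniform by compactness of $\gamma$.

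Next, let $\sigma_{1},\sigma_{2}$ denote the two non-$\widetilde{\gamma}$ edges of the ideal triangle $T$, so $\sigma_{i}$ is asymptotic to $\widetilde{\gamma}(\mp\infty)$. Parametrizing $\sigma_{i}$ by arc length, the unit-speed relation $G^{2}\dot{x}^{2}+\dot{y}^{2}=1$ together with the geodesic equation $\ddot{y}=GG_{y}\dot{x}^{2}$ gives
\[
\frac{d(\dot{y}^{2})}{dy}=\frac{2G_{y}}{G}\bigl(1-\dot{y}^{2}\bigr),
\]
which I integrate with the asymptotic condition $\dot{y}^{2}\to 0$ as $y\to 0$. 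Plugging in the leading term from the previous display produces
\[
\dot{y}^{2}=\frac{2\,y^{\epsilon+2}}{(\epsilon+1)(\epsilon+2)}+o\bigl(y^{\epsilon+2}\bigr),
\]
so $dy/ds\asymp y^{(\epsilon+2)/2}$, while $\dot{x}\to \pm 1$ near $\widetilde{\gamma}$. Separating variables shows that $\sigma_{i}$ meets the level set $\{y=\eta\}$ at an $x$-coordinate $x_{i}(\eta)$ with $|x_{i}(\eta)|\asymp \eta^{-\epsilon/2}$ as $\eta\to 0^{+}$, with $x_{1}(\eta)\to-\infty$ and $x_{2}(\eta)\to+\infty$.

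It remains to bound the area. Choose $y_{0}>0$ small enough that $\sigma_{1},\sigma_{2}$ stay inside $\widetilde{N}$ and are $y$-monotone on $(0,y_{0}]$; the monotonicity is automatic since $\ddot{y}=GG_{y}\dot{x}^{2}>0$ for $y>0$. The region
\[
S=\bigl\{(x,y):0\le y\le y_{0},\ x_{1}(y)\le x\le x_{2}(y)\bigr\}
\]
is contained in $T$, so using $G\ge 1$,
\[
\mathrm{Area}(T)\ \ge\ \int_{0}^{y_{0}}\!\int_{x_{1}(y)}^{x_{2}(y)}G(x,y)\,dx\,dy\ \asymp\ \int_{0}^{y_{0}}y^{-\epsilon/2}\,dy,
\]
and the right-hand side diverges precisely when $\epsilon/2\ge 1$, i.e.\ $\epsilon\ge 2$, proving the theorem.

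The main obstacle is that the metric coefficient $G(x,y)$ is \emph{not} translation-invariant in $x$: the ODE for $\dot{y}^{2}$ has $x$-dependent coefficients along $\sigma_{i}$, so there is no exact Clairaut integral $G^{2}\dot{x}\equiv \mathrm{const}$ as in a warped product. The argument survives because the leading term of $G-1$ near $y=0$ is $x$-independent (its coefficient depends only on $f(0)=1$) and the $x$-dependent corrections are genuinely of higher order. A comparison argument, sandwiching the actual $\dot{y}^{2}$ between two explicit $x$-independent ODE solutions with the same leading behavior, yields $y\asymp|x|^{-2/\epsilon}$ uniformly along $\sigma_{i}$; and the divergence of $\int_{0}^{y_{0}}y^{-\epsilon/2}\,dy$ is insensitive to the constants hidden in $\asymp$, so the lower bound on $\mathrm{Area}(T)$ goes through.
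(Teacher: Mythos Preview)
The paper does not prove this theorem: it is quoted verbatim from Ruggiero \cite{RR}, with only the one-line remark that it ``follows from the fact that the length of stable Jacobi fields decreases in a sub-exponential rate along a geodesic with curvature close to zero.'' Your direct computation in Fermi coordinates is a correct way to make this precise, and is essentially Ruggiero's own method: the polynomial decay $y\asymp |x|^{-2/\epsilon}$ of the asymptotic edge is exactly the sub-exponential approach the paper alludes to.

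One point simplifies your argument considerably and removes what you call the ``main obstacle.'' The hypothesis $K(p)=-y(p)^{\epsilon}f(y(p))$ says that $K$ depends \emph{only on $y$} in $N$. Since in Fermi coordinates $K=-G_{yy}/G$, and the initial data $G(x,0)=1$, $G_{y}(x,0)=0$ are $x$-independent, uniqueness for the ODE $G_{yy}=y^{\epsilon}f(y)\,G$ forces $G=G(y)$ throughout $\widetilde N$: the metric there is a genuine warped product. Hence there \emph{is} an exact Clairaut integral $G(y)^{2}\dot x\equiv c$, and the asymptotic condition $y\to 0$, $\dot y\to 0$ pins down $c=\pm 1$, giving the closed form $\dot y^{2}=1-G(y)^{-2}$. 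Your ODE $d(\dot y^{2})/dy = 2(G_{y}/G)(1-\dot y^{2})$ integrates exactly to this. So the $x$-dependence you worry about in the last paragraph never occurs, and the sandwich/comparison step is unnecessary. With this simplification the monotonicity of $y$ along $\sigma_{i}$ (no turning points, since $\dot y^{2}>0$ for $y>0$), the asymptotic $|x_{i}(y)|\asymp y^{-\epsilon/2}$, and the divergence of $\int_{0}^{y_{0}}y^{-\epsilon/2}\,dy$ for $\epsilon\ge 2$ all follow cleanly.
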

\vspace{.1cm}

Roughly speaking, the above theorem follows from the fact that the length of stable Jacobi fields decreases in a sub-exponential rate along a geodesic with curvature close to zero (cf. \cite{RR}). Combining this theorem with the classical comparison theorems, Ruggiero also proved that for nonpositively curved surfaces the area of an ideal triangle having a flat geodesic as an edge is infinite (Lemma 4.1 in \cite{RR}).\\

The following theorem is a version of the above result for surfaces without focal points. For completeness, we provide a sketch of the proof.
\vspace{.1cm}

\begin{theorem}\label{ideal triangle}
Let $M$ be a smooth, connected and closed surface with genus $\mathfrak{g} \geq 2$. Suppose that $M$ has no focal points. If $K(\widetilde{\gamma}(t))\equiv 0$, for $\forall t\in \mathbb{R}$, then every ideal triangle having $\widetilde{\gamma}(t)$ as an edge has infinite area.
\end{theorem}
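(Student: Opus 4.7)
The plan is to adapt Ruggiero's argument from the nonpositively curved setting of Theorem \ref{model} (see also Lemma 4.1 of \cite{RR}) by installing global Fermi coordinates along $\widetilde\gamma$ and reducing the area estimate to a one-dimensional integral whose divergence follows from the flatness of the edge.

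First I would set up Fermi coordinates $(s, y)$ along $\widetilde\gamma$ via $(s, y) \mapsto \exp_{\widetilde\gamma(s)}(y\, e_2(s))$, where $e_2(s)$ is a choice of unit normal to $\widetilde\gamma$ at $\widetilde\gamma(s)$. The no focal points hypothesis rules out a failure of injectivity for these coordinates: a vanishing of the variation Jacobi field with $J(0) = \widetilde\gamma'(s)$ and $J'(0) = 0$ at some $y_0 > 0$ would, upon time-reversing along the perpendicular geodesic $y \mapsto \exp_{\widetilde\gamma(s)}(y\, e_2(s))$, produce a genuine focal pair, contradicting our hypothesis. Hence this map is a diffeomorphism onto the open half-plane on each side of $\widetilde\gamma$. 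The pulled-back metric takes the form $G(s, y)\, ds^2 + dy^2$, the function $\sqrt{G(s, y)}$ solves the scalar Jacobi equation $\partial_y^2 \sqrt{G} + K(s, y) \sqrt{G} = 0$ with $\sqrt{G}(s, 0) = 1$ and $\partial_y \sqrt{G}(s, 0) = 0$, and the area element is $\sqrt{G(s, y)}\, ds\, dy$.

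Next I would derive the pointwise lower bound $\sqrt{G}(s, y) \geq 1$. On surfaces without focal points the function $y \mapsto \|J(y)\|^2$ is convex for any perpendicular Jacobi field $J$; coupled with the initial condition $\frac{d}{dy}\|J\|^2(0) = 2\langle J(0), J'(0)\rangle = 0$, this convexity forces $\|J(y)\| \geq \|J(0)\| = 1$ for every $y \geq 0$. Consequently, the area of any region $R \subset \{y \geq 0\}$ satisfies $\text{Area}(R) \geq \int\int_R ds\, dy$, its Euclidean area in Fermi coordinates.

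Applying this to an ideal triangle $T$ with $\widetilde\gamma$ as one edge (after reflecting to place $T$ in $\{y \geq 0\}$), I obtain $\text{Area}(T) \geq \int L(s)\, ds$, where $L(s)$ denotes the Fermi height of $T$ above the point $\widetilde\gamma(s)$. The main obstacle, and the most delicate part of the argument, will be showing $\int L(s)\, ds = \infty$. If the edges of $T$ opposite $\widetilde\gamma$ bound flat strips with $\widetilde\gamma$, the flat strip lemma (Lemma \ref{flat strip lemma}) furnishes a uniform positive lower bound on $L(s)$ on infinite rays, and divergence is immediate. Otherwise, by Lemma \ref{boundary} the opposite edges are asymptotic to $\widetilde\gamma$ without bounding flat strips, so $L(s) \to 0$ as $|s| \to \infty$; in this case I would replicate Ruggiero's comparison argument from \cite{RR}, exploiting that $K(\widetilde\gamma(s)) \equiv 0$ along $\widetilde\gamma$ together with the Riccati equation along perpendicular geodesics to derive a sub-exponential decay rate for $L(s)$, which suffices for divergence of $\int L(s)\, ds$. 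Because the no focal points hypothesis enters Ruggiero's estimates only through the convexity of $\|J\|^2$, his argument transfers with only minor modification.
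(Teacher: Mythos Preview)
Your proposal has a genuine error at the second step. The assertion that ``on surfaces without focal points the function $y \mapsto \|J(y)\|^2$ is convex for any perpendicular Jacobi field'' is false: convexity of $\|J\|^2$ is a nonpositive curvature fact, coming from $(\|J\|^2)'' = 2\|J'\|^2 - 2K\|J\|^2 \geq 0$ when $K \leq 0$. The no focal points hypothesis allows regions of positive curvature, and the conclusion $\sqrt{G}(s,y) \geq 1$ does not follow. Concretely, with $j(0)=1$, $j'(0)=0$ and $K(s,0)=0$ one computes $j(y) = 1 - \tfrac{1}{6}\,\partial_y K(s,0)\,y^3 + O(y^4)$, so whenever $\partial_y K(s,0) > 0$ (which nothing in the hypotheses excludes) you get $\sqrt{G}(s,y) < 1$ for small $y>0$. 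The paper's remark following its proof emphasises exactly this point: in the no focal points setting the curvature near $\widetilde\gamma$ need not have a sign.

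Even if you repair this with a weaker uniform lower bound on $\sqrt{G}$ in a thin strip about $\widetilde\gamma$ (which is available from compactness of $M$), the part you label ``the main obstacle'' is not a detail but the entire substance of the theorem, and you handle it by deferring to Ruggiero's comparison argument. That is precisely what the paper does: it constructs an explicit nonpositively curved model annulus $(A,g_a)$ with $K_{g_a}(s,t) = -12at^2/(1+at^4)$, establishes $K_g \geq K_{g_a}$ in a tubular neighbourhood of $\widetilde\gamma$ via Taylor expansion and compactness of $M$, and then invokes a comparison lemma (Lemma \ref{comparison}) together with Theorem \ref{model} for the model to conclude. So once the incorrect convexity step is removed, your outline does not offer an alternative route; it collapses into the paper's comparison proof at the only nontrivial step.
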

\begin{proof}
First, we construct a model surface. The comparison theory will be applied to this model surface and the surface without focal points that we are considering in the theorem. Consider the annulus $A=S^1\times \mathbb{R}$ endowed with the metric $g_a(s,t)=Y^2(t)ds^2+dt^2$, where
$Y(t)=1+at^4$ for some constant $a>0$. Obviously, for a fixed $s$, $\gamma_s(t)=(s,t)$ is a geodesic with the arclength parameter $t$. The circle $\gamma^0(s)=(s,0)$ is also a geodesic on $A$. By a simple computation, we know the curvature of $A$ is
\[K_{g_a}(s,t)=\frac{-Y''(t)}{Y(t)}=\frac{-12at^2}{1+at^4}.\]
Since $|t|=|t(p)|$ is the distance from $p=(s,t)\in A$ to the geodesic $\gamma_0$, and $K(s,t)=0$ if and only if $t = 0$, the geodesic $\gamma^0$ has a tubular neighborhood $N$ satisfying the hypothesis of Theorem \ref{model}.\\

Now we consider the surface $(M,g)$ in the theorem, that is, a smooth, connected and closed surface of genus $\mathfrak{g} \geq 2$, which has no focal points. Consider a flat geodesic $\gamma(s)$ on $M$. Let $\widetilde\gamma(s)$ be an arbitrary lifting of $\gamma(s)$ on $\widetilde{M}$ and consider a normal tubular neighborhood $N_\epsilon(\widetilde\gamma)$ on $\widetilde{M}$. One can consider the Fermi coordinates in $N_\epsilon(\widetilde\gamma)$, that is $$F: \mathbb{R}\times (-\epsilon, \epsilon)\to N_\epsilon(\widetilde\gamma),\ \ F(s,t)=\exp_{\widetilde\gamma(s)}(tn(s)),$$ where $n(s)$ is a smooth unit vector field everywhere normal to $\widetilde\gamma'(s)$. Let $A_g=(\mathbb{R}\times (-\epsilon, \epsilon), F^*g)$ be the strip $\mathbb{R}\times (-\epsilon, \epsilon)$ endowed with the pullback of the metric $g$ by $F$. By the Taylor's formula and the
compactness of $M$, there exist $\epsilon_1\leq \epsilon$ and $C>0$, such that for every $s\in \mathbb{R}$ the curvature of $A_g$ can be written as
\[K_g(p)=-y(p)^2f_s(y(p)),\]
where $f_s: (-\epsilon_1, \epsilon_1)\to \mathbb{R}$ is a $C^\infty$ function with $f_s(t)\leq C$ for every $t\in (-\epsilon_1, \epsilon_1)$. Since $t=t(p)$ is the distance from $p$ to $F(s,0)=\widetilde\gamma(s)$, we have
\[K_g(s,t)\geq -t^2C,\]
where $K(s,t)$ is the curvature of $A_g$. Pick an $a>\frac{C}{12}$. Then
\begin{equation*}
\begin{aligned}
K_{g_a}(s,t)=\frac{-12at^2}{1+at^4}=-12at^2(1-at^4+O(t^8))\leq -Ct^2
\end{aligned}
\end{equation*}
for every $(s,t)\in \mathbb{R}\times (-\delta_a, \delta_a)$, where $\delta_a>0$ depends on $a$. So we know that
\[K_{g}(s,t)\geq -t^2C \geq K_{g_a}(s,t),\]
for every $(s,t)\in \mathbb{R}\times (-\delta_a, \delta_a)$.\\

The following lemma follows from a standard comparison argument. See \cite{CE} for more details on the comparison theory.
\vspace{.1cm}

\begin{lemma}\label{comparison}
Let $a>0, \delta_a>0$, and $A_g, A_{g_a}$ be the strips defined as above. Suppose that the horizontal line $\widetilde\gamma(s)=(s,0)$ is a geodesic for both $A_g$ and $A_{g_a}$. Let $p,q\in \mathbb{R}\times (-\delta, \delta)$, and $[p, q]_g, [p, q]_{g_a}$ be the geodesic segments joining $p$ to $q$ under the metrics $g$ and $g_a$ respectively. Let $\gamma_{s_0}(t)=(s_0, t)$. Then
\[d_g([p, q]_g\cap \gamma_{s_0}, \widetilde\gamma(s_0)) > d_{g_a}([p, q]_{g_a}\cap \gamma_{s_0}, \widetilde\gamma(s_0)),\]
where $d_g$ and $d_{g_a}$ are the distances with respect to the metric $g$ and $g_a$ respectively.
\end{lemma}
\vspace{.1cm}

Now let $p\in A_g$ and let $\widetilde\gamma_{g,p}, \widetilde\gamma_{g_a,p}$ be asymptotic to $\widetilde\gamma$ starting at $p$. Consider the ideal
triangles
\begin{equation*}
\begin{aligned}
\Delta&=[\widetilde\gamma(0),p]_g\cup \widetilde\gamma_{g,p}[0,+\infty)\cup \widetilde\gamma[0,+\infty)\\
\Delta_{g_a}&=[\widetilde\gamma(0),p]_g\cup \widetilde\gamma_{g_a,p}[0,+\infty)\cup \widetilde\gamma[0,+\infty).
\end{aligned}
\end{equation*}
By Lemma \ref{comparison} and a limit argument, we can deduce that $\Delta_{g_a}\subset \Delta_{g}$. So $\text{area}_g(\Delta_{g_a})\leq\text{area}_g(\Delta_{g})$. Then combining Rauch comparison theorems we obtain that
\[\text{area}_{g_a}(\Delta_{g_a})\leq\text{area}_g(\Delta_{g_a})\leq\text{area}_g(\Delta_{g}).\]
By Theorem \ref{model}, $\text{area}_{g_a}(\Delta_{g_a})$ is infinite. Thus $\text{area}_g(\Delta_{g})$ is infinite. This proves the theorem.
\end{proof}
\begin{remark}
A difference between the above proof and the nonpositive curvature case is that we do not require $f_s(t)$ to be a positive function. For our purpose to apply comparison theorems, only the upper bound of $f_s(t)$ is used. Thus, the proof goes almost the same as that of Lemma 4.1 in \cite{RR}.
\end{remark}
\vspace{.3cm}

\subsection{Non-closed flat geodesics}
In this subsection, we discuss some important properties of the flat geodesics. Our Theorem \ref{nonclosedset} is a straightforward corollary of these properties. In fact, it is closely related to the following two lemmas (Lemma \ref{flat closed geodesic0} and \ref{flat closed geodesic}).
The first lemma shows that if a flat geodesic converges to a closed one (no matter flat or not), then the former geodesic must also be closed, and coincide with the latter.
\vspace{.1cm}

\begin{lemma}\label{flat closed geodesic0}
Suppose that $y\in \Lambda$, and the $\omega$-limit set $\omega(y)= \mathcal{O}(z)$ where $\mathcal{O}(z)$ is periodic. Then $\mathcal{O}(y)=\mathcal{O}(z)$. In particular, $\mathcal{O}(y)$ is periodic.
\end{lemma}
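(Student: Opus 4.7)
The plan is to lift to the universal cover, use that $\omega(y) = \mathcal{O}(z)$ forces $\widetilde{\gamma}_{\tilde y}$ to be forward-asymptotic to $\widetilde{\gamma}_{\tilde z}$, and then exploit the flatness of both geodesics together with Lemma \ref{boundary} and Theorem \ref{flat strip closed} to pin down the orbit of $y$.

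First I would verify that $z \in \Lambda$: taking $t_n \to +\infty$ with $g^{t_n} y \to z$, for each fixed $s$ continuity of $K$ along the base gives $K(\gamma_z(s)) = \lim_n K(\gamma_y(t_n + s)) = 0$, so $\gamma_z$ itself is a closed flat geodesic. Next, I would choose compatible lifts $\tilde y, \tilde z \in S\widetilde M$. Since $g^t y \to \mathcal{O}(z)$ in $SM$ and the preimage of $\mathcal{O}(z)$ in $S\widetilde M$ is a discrete union of lifted axes, unit-speed forward motion combined with the positive separation between distinct lifts prevents $g^t \tilde y$ from jumping between them for large $t$; after replacing $\tilde z$ by a suitable deck translate, $\widetilde{\gamma}_{\tilde y}$ lies in an $\varepsilon$-tube of $\widetilde{\gamma}_{\tilde z}$ for all $t \geq T_0$. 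In particular, $\widetilde{\gamma}_{\tilde y}(+\infty) = \widetilde{\gamma}_{\tilde z}(+\infty)$, so $\tilde y$ and $\tilde z$ lie on a common stable leaf.

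The core step is to show that $\widetilde{\gamma}_{\tilde y}$ and $\widetilde{\gamma}_{\tilde z}$ either coincide as sets or bound a flat strip. On the common stable leaf, $d(t) := d(\widetilde{\gamma}_{\tilde y}(t), \widetilde{\gamma}_{\tilde z}(t))$ is non-increasing (the integrated form of Proposition \ref{subspace}(8) on $S\widetilde M$), so $d(t) \to \delta \geq 0$. If $\delta > 0$, Lemma \ref{boundary} applied upstairs says the two geodesics converge to the two boundaries $\widetilde b_1, \widetilde b_2$ of a flat strip of width $\delta$; because $\widetilde{\gamma}_{\tilde z}$ is itself flat and strongly asymptotic to $\widetilde b_1$, the Jacobi equation along it reduces to $j'' = 0$, whose bounded solutions are constants, so the stable Jacobi field realizing the variation from $\widetilde{\gamma}_{\tilde z}$ to $\widetilde b_1$ is parallel---this forces $\widetilde{\gamma}_{\tilde z} = \widetilde b_1$ and similarly $\widetilde{\gamma}_{\tilde y} = \widetilde b_2$, so $\widetilde{\gamma}_{\tilde y}, \widetilde{\gamma}_{\tilde z}$ are themselves the two boundaries. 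If $\delta = 0$, the same parallel-Jacobi argument applied directly to $\widetilde{\gamma}_{\tilde y}$ and $\widetilde{\gamma}_{\tilde z}$ forces them to coincide as sets.

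In the coincidence case, $\tilde y = g^{t_0} \tilde z$ for some $t_0 \in \mathbb{R}$, so $\mathcal{O}(y) = \mathcal{O}(z)$. In the flat-strip case, Theorem \ref{flat strip closed} gives that the strip descends to a closed flat strip in $M$, hence $\gamma_y$ is periodic; combining $\omega(y) = \mathcal{O}(y)$ with the hypothesis $\omega(y) = \mathcal{O}(z)$ then yields $\mathcal{O}(y) = \mathcal{O}(z)$ (the alternative in which $\gamma_y$ and $\gamma_z$ project to distinct closed orbits in $SM$ is ruled out because it would violate $\omega(y) = \mathcal{O}(z)$). The main obstacle I expect is upgrading the infinitesimal statement---stable Jacobi fields along a flat axis are parallel---to the finite statement that two forward-asymptotic flat geodesics in a no-focal-points manifold are at constant distance. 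This should follow from Fermi coordinates along $\widetilde{\gamma}_{\tilde z}$ together with the standard stable-leaf contraction picture, but it needs care because Proposition \ref{subspace}(8) is phrased infinitesimally.
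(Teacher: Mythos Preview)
Your approach diverges from the paper's at the crucial step, and the gap you yourself identify as the ``main obstacle'' is real and not repairable by the means you propose.

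The paper does not try to pass from the infinitesimal statement (stable Jacobi fields along a flat geodesic are parallel) to the finite one (two forward-asymptotic flat geodesics are at constant distance). Instead it shows that \emph{both} endpoints at infinity coincide, $\widetilde{\gamma}_{\tilde y}(\pm\infty)=\widetilde{\gamma}_{\tilde z}(\pm\infty)$, and then invokes the flat strip lemma (Lemma~\ref{flat strip lemma}) directly. The missing ingredient in your argument is Theorem~\ref{ideal triangle}: an ideal triangle with a flat geodesic as an edge has infinite area. The paper lets $\phi\in\pi_1(M)$ be the deck transformation with axis $\widetilde{\gamma}_{\tilde z}$ (this is where periodicity of $z$ enters). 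If $\phi$ did not fix $\widetilde{\gamma}_{\tilde y}$, then $\widetilde{\gamma}_{\tilde y}$ and $\phi(\widetilde{\gamma}_{\tilde y})$ share the forward endpoint $\widetilde{\gamma}_{\tilde z}(+\infty)$, and one chooses a transversal $\widetilde{\alpha}$ so that $\phi$ carries one ideal triangle with vertex at that endpoint to another; comparing areas forces a finite-area quadrilateral to dominate an ideal triangle with the flat edge $\widetilde{\gamma}_{\tilde y}$, contradicting Theorem~\ref{ideal triangle}. Hence $\phi(\widetilde{\gamma}_{\tilde y})=\widetilde{\gamma}_{\tilde y}$, both endpoints match, and the flat strip lemma plus $d(t)\to 0$ finish the argument.

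Your route fails because knowing $K\equiv 0$ along $\widetilde{\gamma}_{\tilde z}$ only tells you about geodesics \emph{infinitesimally} close to it on the stable leaf. To integrate up to the finite arc from $\tilde z$ to $\tilde y$ along the stable horocycle you would need the intermediate geodesics $\gamma_{v_s}$, $s\in[0,1]$, to be flat as well---and nothing guarantees that, since the curvature in the region between $\widetilde{\gamma}_{\tilde y}$ and $\widetilde{\gamma}_{\tilde z}$ is uncontrolled (it may even be positive in the no-focal-points setting). Fermi coordinates along $\widetilde{\gamma}_{\tilde z}$ do not help: they encode the metric via a function $f(s,t)$ with $K=-f_{tt}/f$, and $K\equiv 0$ along $t=0$ and along the curve $t=h(s)$ representing $\widetilde{\gamma}_{\tilde y}$ does not force $h$ to be constant. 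The same issue recurs in your $\delta>0$ branch when you claim $\widetilde{\gamma}_{\tilde z}=\widetilde b_1$. Note also that your observation $z\in\Lambda$ is correct but is not used by the paper; what the paper needs is the hypothesis $y\in\Lambda$, precisely to invoke Theorem~\ref{ideal triangle} on the ideal triangle with $\widetilde{\gamma}_{\tilde y}$ as an edge.
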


\begin{proof}

\begin{figure}[h]
\centering
\setlength{\unitlength}{\textwidth}

\begin{tikzpicture}
\draw (0,0) circle (2);

\draw [name path=line 1](0,2) to [out=260,in=60] (-1,-1.73)node[below] {$\widetilde{\gamma}_0$};
\draw  [name path=line 2](0,2)to [out=270,in=130] (1, -1.73)node[below] {$\widetilde{\gamma}$};
\draw  [name path=line 3](0,2)to [out=280,in=135] (1.42, -1.42)node[right] {$\phi(\widetilde{\gamma})$};

\draw [name path=line 4](-1.73, -1) to [out=355,in=182] (1.73,-1)node[right] {$\widetilde{\alpha}$};
\draw ([name path=line 5]-2,0) to [out=355,in=182] (2,0) node[right] {$\phi(\widetilde{\alpha})$};
\coordinate [label=below:$A$] (A) at (-0.67,-1.05);
\coordinate [label=below:$B$] (B) at (0.51,-1.05);
\coordinate [label=below left:$C$] (C) at (-0.35,-0.05);
\coordinate [label=below left:$D$] (D) at (0.15,-0.05);
\coordinate [label=below right:$E$] (E) at (0.5,-0.03);
\coordinate [label=above:$F$] (F) at (0,2);
\fill (A) circle (1.5pt)(B) circle (1.5pt)(C) circle (1.5pt)(D) circle (1.5pt)(E) circle (1.5pt)(F) circle (1.5pt);
\end{tikzpicture}

\caption[]{Proof of Lemma \ref{flat closed geodesic0}}

\end{figure}

Since $\omega(y)= \mathcal{O}(z)$, we can lift geodesics $\gamma_z(t), \gamma_y(t)$ to the universal cover $\widetilde{M}$, denoted by $\widetilde{\gamma}_0(t)$ and $\widetilde{\gamma}(t)$ respectively, such that $\lim_{t\to +\infty}d(\widetilde{\gamma}_0(t), \widetilde{\gamma}(t))=0$. In particular, $\widetilde{\gamma}_0(+\infty)=\widetilde{\gamma}(+\infty)$.\\

Since $\gamma_z(t)$ is a closed geodesic, there exists an isometry $\phi$ of $\widetilde{M}$ such that $\phi(\widetilde{\gamma}_0(t))=\widetilde{\gamma}_0(t+t_0)$. Moreover, on the boundary of the disk $\widetilde{M}(\infty)$, $\phi$ fixes exactly two points $\widetilde{\gamma}_0(\pm \infty)$, and for any other point $a\in \widetilde{M}(\infty)$, $\lim_{n\to +\infty}\phi^n(a)= \widetilde{\gamma}_0(+\infty)$.\\

Assume $\widetilde{\gamma}$ is not fixed by $\phi$. Then $\widetilde{\gamma}$ and $\phi(\widetilde{\gamma})$ do not intersect, since $\phi(\widetilde{\gamma})(+\infty)=\widetilde{\gamma}(+\infty)$. By Lemma \ref{watkins}, replacing $\phi$ by $\phi^N$ for a large enough $N\in \mathbb{N}$ if necessary, we know that the position of $\phi(\widetilde\gamma)$ must be as shown in Figure 1. We then pick another geodesic $\widetilde{\alpha}$ as in Figure 1. The image of infinite triangle $ABF$ under $\phi$ is the infinite triangle $CEF$. Since $\phi$ is an isometry, it preserves area. With a limit process, it is easy to show that the area of $ABCD$ is no less than the area of $DEF$. But since $\gamma$ is a flat geodesic, the area of $DEF$ is infinite by Theorem \ref{ideal triangle}. We arrive at a contradiction to the fact that $ABCD$ has finite area. So $\phi(\widetilde{\gamma})$ and $\widetilde{\gamma}$ must coincide.\\

Therefore $\widetilde{\gamma}(\pm\infty)= \widetilde{\gamma}_0(\pm\infty)$. Then either $\widetilde{\gamma}(t)$ and $\widetilde{\gamma}_0(t)$ bound a flat strip by the flat strip lemma \ref{flat strip lemma} or $\widetilde{\gamma}(t)=\widetilde{\gamma}_0(t)$. Recall that $\lim_{t\to +\infty}d(\widetilde{\gamma}(t), \widetilde{\gamma}_0(t))=0$, we must have $\widetilde{\gamma}(t)=\widetilde{\gamma}_0(t)$. Hence $\mathcal{O}(y)=\mathcal{O}(z)$.
\end{proof}
\vspace{.1cm}

Lemma \ref{flat closed geodesic0} can be strengthened to the following.
\begin{lemma}\label{flat closed geodesic}
 Suppose that $y\in \Lambda$ and $z\in \omega(y)$ where $z$ is periodic. Then $\mathcal{O}(y)=\mathcal{O}(z)$. In particular, $y$ is periodic.
\end{lemma}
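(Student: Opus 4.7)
The plan is to establish the strengthened hypothesis $\omega(y)=\mathcal{O}(z)$ and then invoke Lemma \ref{flat closed geodesic0}; the work takes place in the universal cover $\widetilde{M}$, adapting the ideal-triangle area argument from the proof of that lemma.

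Fix a lift $\widetilde{\gamma}_z$ of $\gamma_z$ and let $\phi\in\pi_1(M)$ be the axis isometry satisfying $\phi\widetilde{\gamma}_z(t)=\widetilde{\gamma}_z(t+T_0)$, where $T_0$ is the period of $\gamma_z$. Since $z\in\omega(y)$, I choose $t_n\to\infty$ with $g^{t_n}(y)\to z$, and select deck transformations $\beta_n\in\pi_1(M)$ so that the lifts $\widetilde{\gamma}_{y,n}:=\beta_n\widetilde{\gamma}_y$ of $\gamma_y$ satisfy $\widetilde{\gamma}_{y,n}'(t_n)\to\widetilde{\gamma}_z'(0)$ in $S\widetilde{M}$. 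Continuity of the forward endpoint map $S\widetilde{M}\to\widetilde{M}(\infty)$ — which holds on closed surfaces of genus $\geq 2$ without focal points, since these are uniformly visible — then forces $\widetilde{\gamma}_{y,n}(+\infty)\to\widetilde{\gamma}_z(+\infty)$ in $\widetilde{M}(\infty)$.

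The crucial step is to upgrade this convergence to equality: $\widetilde{\gamma}_{y,n}(+\infty)=\widetilde{\gamma}_z(+\infty)$ for some (and then all large) $n$. Arguing by contradiction, if the endpoints remain distinct, then the ideal triangle with vertices $\widetilde{\gamma}_z(-\infty)$, $\widetilde{\gamma}_z(+\infty)$, $\widetilde{\gamma}_{y,n}(+\infty)$ has the flat geodesic $\widetilde{\gamma}_z$ as an edge and, by Theorem \ref{ideal triangle}, infinite area; yet the bookkeeping of Lemma \ref{flat closed geodesic0} — using an auxiliary geodesic together with $\phi^N$-translates of the bounded region trapped between $\widetilde{\gamma}_{y,n}$ and $\widetilde{\gamma}_z$ — shows this ideal triangle is covered by $N$ isometric copies of a single finite-area region, giving the required contradiction. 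Once the forward endpoints agree, the remainder of the proof of Lemma \ref{flat closed geodesic0} applies verbatim: $\phi$ fixes $\widetilde{\gamma}_{y,n}$ setwise, so by the flat strip lemma (Lemma \ref{flat strip lemma}) either $\widetilde{\gamma}_{y,n}=\widetilde{\gamma}_z$ (giving $\mathcal{O}(y)=\mathcal{O}(z)$ directly) or they bound a flat strip, in which case Theorem \ref{flat strip closed} forces $\gamma_y$ to be closed, whence $\omega(y)=\mathcal{O}(y)$ and the assumption $z\in\omega(y)$ yields $\mathcal{O}(y)=\mathcal{O}(z)$.

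The main obstacle is the ideal-triangle area argument in the middle step. In Lemma \ref{flat closed geodesic0} the hypothesis $\omega(y)=\mathcal{O}(z)$ supplies exact matching of forward endpoints for free, whereas here we only have approximate alignment of $\widetilde{\gamma}_{y,n}$ with $\widetilde{\gamma}_z$ at the matched times $(t_n,0)$, and the finite-area bounded region whose $\phi^N$-image covers the ideal triangle must be built with care that is uniform in $n$ so as to upgrade near-convergence of endpoints to actual equality.
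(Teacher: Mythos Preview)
There is a genuine gap at your ``crucial step'', and you yourself flag it as the main obstacle without resolving it. The difficulty is structural, not merely a matter of care. In Lemma~\ref{flat closed geodesic0} the area argument works because the hypothesis $\omega(y)=\mathcal{O}(z)$ supplies, for free, the coincidence $\widetilde\gamma(+\infty)=\widetilde\gamma_0(+\infty)$. That shared endpoint is exactly what makes $DEF$ an ideal triangle with the \emph{flat} geodesic $\widetilde\gamma$ as an edge while simultaneously making $ABDC$ a genuinely bounded quadrilateral (its four vertices are finite intersection points). In your situation no endpoint is shared: the edges of the ideal triangle you write down are $\widetilde\gamma_z$ together with the two geodesics from $\widetilde\gamma_z(\pm\infty)$ to $\widetilde\gamma_{y,n}(+\infty)$, none of which is $\widetilde\gamma_{y,n}$, so the phrase ``the bounded region trapped between $\widetilde\gamma_{y,n}$ and $\widetilde\gamma_z$'' has no referent that does the job. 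One might instead try to compare the triangle $T_n$ with its image $\phi(T_n)$, but both have infinite area, so the equality $\text{area}(T_n)=\text{area}(\phi(T_n))$ says nothing about $T_n\setminus\phi(T_n)$; and the north--south dynamics on $\widetilde M(\infty)$ needed to ensure $\phi(T_n)\subsetneq T_n$ is provided by Lemma~\ref{watkins} only for \emph{non-flat} axes, whereas here $z\in\omega(y)\subset\Lambda$ forces $\widetilde\gamma_z$ to be flat. In short, the mechanism of Lemma~\ref{flat closed geodesic0} proves that a shared forward endpoint propagates to a shared backward endpoint; it does not manufacture a shared endpoint out of mere convergence.

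The paper takes a different route and does not attempt to extend the ideal-triangle calculation. Instead it invokes the expansivity property of a vector not tangent to a flat strip (Lemma~\ref{expansivity}), following the argument of Lemma~3.8 in \cite{Wu}: if $z$ is not tangent to a flat strip then expansivity at $z$, combined with periodicity of $z$ and the recurrence $g^{t_k}(y)\to z$, forces the orbit of $y$ to shadow $\mathcal{O}(z)$ for all time and hence to equal it; the flat-strip case is disposed of via Theorem~\ref{flat strip closed}. This completely sidesteps the missing shared-endpoint input.
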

\vspace{.1cm}

The proof of Lemma \ref{flat closed geodesic} follows from an argument similar to the one in the proof of Lemma 3.8 in \cite{Wu}. The argument relies on the expansivity property of a unit vector not tangent to a flat strip, as stated in Lemma \ref{expansivity}. Then Theorem \ref{nonclosedset} follows from an almost same argument. For this reason, we omit the proof here. Readers can check the proof of Lemma 3.8 and Theorem 1.5 in \cite{Wu} for details of the argument.\\

Now, we consider the non-closed flat geodesics. The lemma in the following describes an important property of the non-closed flat geodesics.
\vspace{.1cm}

\begin{lemma}\label{selfintlem}
A non-closed geodesic on $M$ along which the curvature is everywhere $0$ must intersect itself.
\end{lemma}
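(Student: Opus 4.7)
The plan is to argue by contradiction and reduce to the area-type argument used in the proof of Lemma \ref{flat closed geodesic0}. Suppose $\gamma$ is a non-closed flat geodesic on $M$ with no self-intersection, and set $v=\gamma'(0)\in\Lambda$. Since $\gamma$ is not closed, $v$ is not periodic, and by Theorem \ref{flat strip closed} $v$ is not tangent to any flat strip, so Lemma \ref{expansivity} gives $v$ the expansivity property. The first task is to extract a recurrence: compactness of $SM$ makes $\omega(v)$ non-empty; closedness of $\Lambda$ gives $\omega(v)\subset\Lambda$; and Lemma \ref{flat closed geodesic} forbids periodic points inside $\omega(v)$ (otherwise $v$ itself would be periodic). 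Pick $w\in\omega(v)$ and times $t_n\to+\infty$ with $g^{t_n}(v)\to w$. Lift $\gamma$ to $\widetilde\gamma\subset\widetilde M$. Simplicity of $\gamma$ on $M$ translates into $\phi(\widetilde\gamma)\cap\widetilde\gamma=\emptyset$ for every nontrivial $\phi\in\pi_1(M)$, and non-closedness makes the setwise stabilizer of $\widetilde\gamma$ in $\pi_1(M)$ trivial. Choose a lift $\widetilde w$ of $w$, let $\widetilde\beta$ be the geodesic of $\widetilde w$, and pick $\phi_n\in\pi_1(M)$ with $d\phi_n\bigl(\widetilde\gamma'(t_n)\bigr)\to\widetilde w$; for large $n$ the $\phi_n$'s are nontrivial, since $\widetilde\gamma'(t_n)$ leaves every compact subset of $S\widetilde M$.

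Next, the reparametrized lifts $\widetilde\delta_n(s):=\phi_n\bigl(\widetilde\gamma(t_n+s)\bigr)$ converge to $\widetilde\beta$ uniformly on compact subsets of $\mathbb R$, by continuous dependence of geodesics on initial conditions; as geodesic lines, $\widetilde\delta_n=\phi_n(\widetilde\gamma)$ is disjoint from $\widetilde\gamma$. A transverse crossing of $\widetilde\beta$ with $\widetilde\gamma$ would persist under $C^0$-perturbation and force $\phi_n(\widetilde\gamma)\cap\widetilde\gamma\neq\emptyset$ for large $n$, contradicting simplicity; and two geodesics through a common point with a common direction must coincide in $\widetilde M$. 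Hence either $\widetilde\beta=\widetilde\gamma$ or $\widetilde\beta\cap\widetilde\gamma=\emptyset$. In either case the ideal endpoints converge, $\phi_n(\widetilde\gamma)(\pm\infty)\to\widetilde\beta(\pm\infty)$, and the goal of the decisive step is to upgrade this to an exact coincidence of an endpoint between $\widetilde\gamma$ and some $\phi_n(\widetilde\gamma)$.

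Once such a shared ideal endpoint is in hand, I replay the area argument of Lemma \ref{flat closed geodesic0}: the isometry $\phi_n$ sends one ideal triangle with an edge on $\widetilde\gamma$ to another, Theorem \ref{ideal triangle} forces both triangles to carry infinite area, and the finite-area quadrilateral trapped between them — whose projection into $M$ is controlled by the simplicity of $\gamma$ together with the finiteness of $\mathrm{area}(M)$ — cannot absorb the imbalance. The resulting contradiction forces $\gamma$ to be closed, against the standing assumption, so $\gamma$ must self-intersect. A complementary case to track is when $\phi_n$ fixes \emph{both} endpoints of $\widetilde\gamma$ for some $n$: then the axis of that $\phi_n$ is exactly $\widetilde\gamma$, contradicting triviality of the setwise stabilizer; so only the true ``one shared endpoint'' situation has to be handled by the area calculation.

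The main obstacle is precisely the upgrade from convergence to equality of ideal endpoints: the boundary relation $\phi_n(\widetilde\gamma)(\pm\infty)\to\widetilde\beta(\pm\infty)$ only asserts closeness, not coincidence, and one must combine the disjointness of all lifts of the simple geodesic $\gamma$ (which bounds the area of a thin projected strip inside the compact $M$) with the infinite-area result of Theorem \ref{ideal triangle} to rule out the ``convergence without equality'' scenario. This area comparison, in the spirit of the figure used for Lemma \ref{flat closed geodesic0}, is the geometric heart of the lemma and the step whose bookkeeping requires the most care.
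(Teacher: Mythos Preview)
Your proposal has a genuine gap at exactly the place you flag: the ``upgrade from convergence to equality of ideal endpoints.'' Convergence $\phi_n(\widetilde\gamma)(\pm\infty)\to\widetilde\beta(\pm\infty)$ does not, by itself, force any $\phi_n(\widetilde\gamma)$ to share an ideal endpoint with $\widetilde\gamma$; disjoint geodesics can approximate a given geodesic on compact sets while keeping four distinct boundary points (think of nested geodesics in the disk model). You give no mechanism that rules this out, and the vague appeal to ``simplicity of $\gamma$ together with the finiteness of $\mathrm{area}(M)$'' is not an argument. Moreover, even granting a shared endpoint, your replay of Lemma~\ref{flat closed geodesic0} does not transfer: in that lemma the isometry $\phi$ is \emph{axial} and fixes the common ideal point, so $\phi$ carries the ideal triangle $ABF$ into a nested triangle $CEF$ and the difference is a compact quadrilateral. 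Your $\phi_n$ has no axis in sight and need not fix the shared endpoint, so there is no reason $\phi_n$ should map the relevant ideal triangle into itself, and no ``finite quadrilateral trapped between them'' is produced.

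The paper's proof avoids both difficulties by a much more direct route that uses neither recurrence nor shared endpoints. One takes a connected component $R$ of the complement of all lifts of $\gamma$, bounded along one side by $\widetilde\gamma$, and inside $R$ builds an ideal triangle $T$ with $\widetilde\gamma$ as an edge (by choosing a nearby point $\widetilde p\in R$ and the geodesic from $\widetilde p$ asymptotic to $\widetilde\gamma$), arranging that the only lift of $\gamma$ meeting $T$ is $\widetilde\gamma$ itself. Theorem~\ref{ideal triangle} gives $\mathrm{area}(T)=\infty$, so since $\mathrm{area}(M)<\infty$ some $q\in M$ has infinitely many preimages $\widetilde q_i\in T$; these must accumulate at the ideal vertex $\widetilde\gamma(\infty)$. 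The deck transformations $\psi_i$ sending $\widetilde q_i$ to $\widetilde q_0$ then carry points of $\widetilde\gamma$ arbitrarily close to $\widetilde q_0$; closedness of the union of lifts forces $\widetilde q_0$ onto a lift, hence onto $\widetilde\gamma$, hence every $\widetilde q_i\in\widetilde\gamma$ and each $\psi_i$ translates $\widetilde\gamma$ --- contradicting that $\gamma$ is not closed. No endpoint-matching and no analogue of the axial picture are needed.
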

\begin{proof}
Let $\gamma$ be a non-closed geodesic on $M$ along which the curvature is everywhere $0$. Assume that $\gamma$ has no self-intersections. Then the lifts of $\gamma$ to the universal cover $\widetilde{M}$ are pairwise disjoint and can be permuted by deck transformations. Furthermore, the identity is the only deck transformation that takes a lift into itself.\\

Let $R$ be a component of the complement of the lifts in the universal cover and $\widetilde \gamma$ be one of the lifts that bounds $R$. Pick a point $\widetilde p$ in $R$ that is close to $\widetilde \gamma$ and let $\widetilde \beta$ be the geodesic that starts from $p$ and is asymptotic to $\widetilde \gamma$. Then $\widetilde \beta$ and $\widetilde \gamma$ are two sides of an ideal triangle $T$  that lies in $R$. Suppose that $\widetilde p$ is close enough to $\widetilde \gamma$ such that the only points of $T$ that lie on a lift of $\gamma$ are those on $\widetilde \gamma$.\\

Since the curvature vanishes everywhere along $\widetilde \gamma$, the area of $T$ is infinite. Hence there is a point $q\in M$ that is covered by an infinite sequence $\widetilde q_0,  \widetilde q_1, \widetilde q_2,\dots$ of points in $T$. These points all belong to the same orbit of the fundamental group (the orbit that covers $q$). Since the action of the fundamental group is properly discontinuous, the sequence $\{\widetilde q_i \}$ must converge to the vertex of $T$ at $\widetilde\gamma(\infty)$. Therefore there is a sequence $t_i \to \infty$ such that $d(\widetilde\gamma(t_i),\widetilde q_i) \to 0$.\\

Let $\psi_i$ be the deck transformation that maps $\widetilde q_i$ to $\widetilde q_0$. Then $\psi_i(\widetilde\gamma(t_i))$ converges to $\widetilde q_0$ as $i \to \infty$. Since each point $\psi_i(\widetilde\gamma(t_i))$ lies on a lift of $\widetilde\gamma$ and the union of the lifts of $\gamma$ is a closed set, $\widetilde q_0$ must lie on a lift of $\gamma$. Then, so do all the points $\widetilde q_i$. However, all these points lie on $T$ and our construction of $T$ ensures that the only lift of $\gamma$ that intersects $T$ is $\widetilde\gamma$. Hence each covering transformation $\psi_i$ must translate $\widetilde\gamma$, which is impossible since the geodesic $\gamma$ is not closed (and it is clear that $\psi_i$ is not the identity for large $i$).
\end{proof}
\vspace{.1cm}

We would like to say a little more. It is evident from the above proof that the ray $\gamma|_{[t_0,\infty)}$ must have self intersections for any $t_0$ (and the same is true for any ray of the form $\gamma|_{(-\infty,t_0]}$). Moreover, we have the following corollary about the angles at intersections:
\vspace{.1cm}

\begin{corollary}\label{selfintcor}
Let $\gamma$ be a non-closed geodesic on $M$ along which the curvature is everywhere $0$. Then for any small enough $\alpha>0$ there is a sequence of times $t_i \to \infty$ such that  $\gamma$ crosses itself at time $t_i$ and at an angle greater than $\alpha$.
\end{corollary}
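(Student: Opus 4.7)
My plan is to argue by contradiction: suppose that the conclusion fails, so for every $\alpha > 0$ only finitely many self-intersections of $\gamma$ occur at angle exceeding $\alpha$. Writing $\theta(t)$ for the supremum of self-crossing angles of $\gamma$ at time $t$ (set to $0$ when none occurs), this means $\theta(t) \to 0$ as $t \to \infty$. The strategy is to use recurrence to manufacture infinitely many self-intersections of $\gamma$ at one fixed positive angle, contradicting $\theta(t) \to 0$.

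First I locate an auxiliary non-closed flat geodesic with a genuinely transverse self-intersection. Since $\gamma$ is non-closed and flat, $\mathcal{O}(\gamma'(0)) \subset \Lambda \cap (\mathrm{Per}(g^t))^c$, and this latter set is closed by Theorem \ref{nonclosedset}; hence $\omega(\gamma'(0)) \subset \Lambda \cap (\mathrm{Per}(g^t))^c$. Pick any $v_\infty \in \omega(\gamma'(0))$; the orbit $\gamma_{v_\infty}$ is then a non-closed flat geodesic on $M$, to which Lemma \ref{selfintlem} applies. After a time shift we may assume $\gamma_{v_\infty}(0) = \gamma_{v_\infty}(b)$ with $b \neq 0$. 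Setting $w_\infty := g^b(v_\infty)$, the vectors $v_\infty$ and $w_\infty$ share the common foot point $q_0 = \pi(v_\infty)$, and the angle $\alpha_0$ between them lies in $(0,\pi)$: if $\alpha_0 = 0$, then $v_\infty$ would be periodic with period $b$, contradicting $v_\infty \notin \mathrm{Per}(g^t)$; and if $\alpha_0 = \pi$, the induced symmetry $\gamma_{v_\infty}(b-t) = \gamma_{v_\infty}(t)$ would force $\gamma'_{v_\infty}(b/2) = 0$, impossible for a unit-speed geodesic.

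Next I transfer this transverse intersection back to $\gamma$ via the implicit function theorem. Since $v_\infty \in \omega(\gamma'(0))$, pick $t_n \to \infty$ with $\gamma'(t_n) \to v_\infty$; by continuity of the flow $\gamma'(t_n + b) \to g^b(v_\infty) = w_\infty$. In a geodesically convex normal neighborhood of $q_0$, define
\[
F_n(\delta_1,\delta_2) \ :=\ \exp_{q_0}^{-1}\bigl(\gamma(t_n + \delta_1)\bigr)\ -\ \exp_{q_0}^{-1}\bigl(\gamma(t_n + b + \delta_2)\bigr).
\]
Then $F_n(0,0) \to 0$ as $n \to \infty$, and (under the canonical identification of tangent spaces along short geodesic segments to $q_0$) the differential $dF_n(0,0)$ is the $2 \times 2$ matrix with columns $\gamma'(t_n)$ and $-\gamma'(t_n + b)$, whose determinant has absolute value tending to $|\sin\alpha_0| > 0$. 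A quantitative implicit function theorem then produces, for all sufficiently large $n$, parameters $\delta_1(n), \delta_2(n) \to 0$ with $F_n(\delta_1(n),\delta_2(n)) = 0$; this gives a genuine self-intersection $\gamma(t_n + \delta_1(n)) = \gamma(t_n + b + \delta_2(n))$ at time $t_n + \delta_1(n) \to \infty$. The tangent vectors at this crossing converge to $v_\infty$ and $w_\infty$, so the crossing angle converges to $\alpha_0$ and in particular exceeds $\alpha_0/2$ for $n$ large. This contradicts $\theta(t) \to 0$, establishing the corollary with threshold $\alpha_0/2$. The only technical step is the implicit function argument, and its non-degeneracy is exactly the transversality $\alpha_0 \in (0,\pi)$ ensured in the previous paragraph.
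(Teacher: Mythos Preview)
Your argument is correct and follows essentially the same route as the paper's proof: pick a vector $v_\infty\in\omega(\gamma'(0))$, observe (via Theorem~\ref{nonclosedset}, equivalently Lemma~\ref{flat closed geodesic}) that $\gamma_{v_\infty}$ is again non-closed and flat, invoke Lemma~\ref{selfintlem} to get a transverse self-intersection of $\gamma_{v_\infty}$, and then transfer it back to $\gamma$ by continuity of the flow. The only difference is cosmetic---your implicit function theorem computation makes explicit the transversality step that the paper abbreviates to ``Since $M$ is a surface, $\gamma$ will have an infinite sequence of self-intersections at angle at least $\alpha$''---and the contradiction framing with $\theta(t)\to 0$ is unnecessary, since your last paragraph already directly produces the required sequence.
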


\begin{proof} There is a geodesic $\beta$ such that $\beta'(0) \in \omega(\gamma'(0))$. It must be a non-closed flat geodesic by Lemma \ref{flat closed geodesic}. We can find $\alpha > 0$ and $T,T' > 0$ such that $\beta(T) = \beta(T')$ and the vectors $\beta'(T)$ and $\beta'(T')$ make an angle at least $2\alpha$.  Since $\beta'(0) \in \omega(\gamma'(0))$, there is a sequence $t_i \to \infty$ such that $\gamma'(t_i) \to \beta'(0)$. Then $\gamma'(t_i + T) \to \beta'(T)$ and
$\gamma'(t_i + T') \to \beta'(T')$ as $T \to \infty$. Since $M$ is a surface, $\gamma$ will have an infinite sequence of self-intersections at angle at least $\alpha$ near $\beta(T) = \beta(T')$.
\end{proof}
\vspace{.3cm}

\section{Proof of main theorems}\label{section 5}

\subsection{Proof of Theorem \ref{B}}
Now we assume that $\Lambda \cap (\text{Per\ } (g^t))^c \neq \emptyset$, in other words, there exists an aperiodic orbit $\mathcal{O}(x)$ in $\Lambda$. We will construct the points $y, z\in\Lambda$ as stated in Theorem \ref{B} starting from $\mathcal{O}(x)$ based on the expansivity property of $x$. A first observation is that we can always find two arbitrarily nearby points on the orbit $\mathcal{O}(x)$. We state this result in the following lemma (see Lemma 3.3 in \cite{Wu} for the proof).
\vspace{.1cm}

\begin{lemma}\label{closelemma}
For any $k\in \mathbb{N}$, there exist two sequences $t_k \to +\infty,$ and $t'_k\to +\infty$, such that $t'_k-t_k\to +\infty$ and
$$d(x_k, x'_k)< \frac{1}{k}, \quad \text{\ where\ \ } x_k=g^{t_k}(x), \ x'_k=g^{t'_k}(x).$$
\end{lemma}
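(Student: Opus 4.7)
The plan is to invoke compactness of $SM$ together with the definition of the $\omega$-limit set. Since $SM$ is a compact metric space, the forward orbit $\{g^t(x): t \geq 0\}$ has a nonempty compact $\omega$-limit set $\omega(x)$. I would fix any point $z \in \omega(x)$; by definition there exists a sequence $s_n \to +\infty$ with $g^{s_n}(x) \to z$.

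The construction is then a routine extraction. For each $k \in \mathbb{N}$, I would choose indices $n = n(k) < m = m(k)$ large enough that the following three conditions all hold simultaneously: (i) $d(g^{s_n}(x), z) < \frac{1}{2k}$; (ii) $d(g^{s_m}(x), z) < \frac{1}{2k}$; and (iii) $s_n > k$ and $s_m - s_n > k$. Conditions (i) and (ii) are possible because $g^{s_n}(x) \to z$, so eventually every $g^{s_n}(x)$ lies within $\frac{1}{2k}$ of $z$; condition (iii) is possible because $s_n \to +\infty$, so we can always push $n$ large and then find a later index $m$ with $s_m$ much larger than $s_n$ while still keeping $g^{s_m}(x)$ near $z$. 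Setting $t_k := s_{n(k)}$ and $t'_k := s_{m(k)}$, the triangle inequality yields $d(x_k, x'_k) \leq d(x_k, z) + d(z, x'_k) < \frac{1}{k}$, while $t_k > k$ and $t'_k - t_k > k$, so $t_k, t'_k \to +\infty$ and $t'_k - t_k \to +\infty$.

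There is no real obstacle to this argument; it is a soft recurrence fact that uses only compactness of $SM$ and the continuity of the flow, not the flat set $\Lambda$, the absence of focal points, or even the assumption $\mathcal{O}(x) \cap \mathrm{Per}(g^t)^c \neq \emptyset$ that was used to introduce $x$ in this subsection. (For a periodic orbit of period $T$, the same statement is vacuously witnessed by the shifts $t_k = 0$, $t'_k = kT$.) The non-periodicity assumption plays its role only downstream of this lemma, in the construction of the asymptotic pair $y,z$ for Theorem \ref{B}, where one needs the two returning points $x_k, x'_k$ to lie on genuinely separated segments of the same non-closed orbit before taking limits and applying the expansivity property of Lemma \ref{expansivity}.
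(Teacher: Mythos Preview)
Your argument is correct and is the standard compactness/recurrence extraction; the paper itself does not give a proof of this lemma but simply refers to Lemma~3.3 of \cite{Wu}, whose content is exactly this elementary $\omega$-limit argument. One cosmetic slip in your parenthetical remark: for a periodic orbit the choice $t_k=0$ does not satisfy $t_k\to+\infty$; take e.g.\ $t_k=kT$, $t'_k=2kT$ instead.
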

\vspace{.1cm}

For each pair $x_k, x'_k$ with large enough $k$, we can check the expansivity in the positive direction of the flow by using the idea in the proof of Proposition 3.4 in \cite{Wu}. In fact, the expansivity in one direction (either positive or negative) of the flow is sufficient for our purpose.
\vspace{.1cm}

\begin{proposition} \label {prop}
Fix an arbitrary small $\epsilon_0 >0$. Then there exists $s_k\to +\infty$ or $s_k\to -\infty$, such that
$$d(g^{s_k}(x_k), g^{s_k}(x'_k))=\epsilon_0,$$
$$\text{\ and\ \ }d(g^s(x_k), g^s(x'_k))< \epsilon_0, \ \forall\ 0 \leq s< s_k \ \text{or\ }\forall\ s_k<s\leq 0\text{\ respectively}.$$
\end{proposition}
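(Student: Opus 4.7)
The plan is to exploit the expansivity property of $x$ in a controlled way. Since $x \in \Lambda$ and $x \notin \mathrm{Per}(g^t)$, Theorem \ref{flat strip closed} tells me that every flat strip consists of closed geodesics; hence $x$ cannot be tangent to any flat strip. By Lemma \ref{expansivity}, $x$ therefore enjoys the expansivity property with some constant $\delta_0 > 0$. I will take $\epsilon_0 \leq \delta_0$ from the outset (the proposition allows $\epsilon_0$ to be arbitrarily small).

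For each $k$ large enough that $t'_k - t_k > \delta_0$, define the forward and backward escape times
\[
s_k^+ := \sup\{\, s \geq 0 : d(g^{s'}(x_k), g^{s'}(x'_k)) < \epsilon_0 \text{ for all } s' \in [0,s]\,\},
\]
\[
s_k^- := \inf\{\, s \leq 0 : d(g^{s'}(x_k), g^{s'}(x'_k)) < \epsilon_0 \text{ for all } s' \in [s,0]\,\}.
\]
The first key step is to show that for each such $k$ at least one of $s_k^\pm$ is finite. If both were infinite, then $d(g^t(x_k), g^t(x'_k)) < \epsilon_0 \leq \delta_0$ for all $t \in \mathbb{R}$. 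Writing $x_k = g^{t_k}(x)$ and $x'_k = g^{t_k}\!\left(g^{t'_k - t_k}(x)\right)$ and translating time by $t_k$, this rewrites as $d(g^\tau(x), g^\tau(y)) < \delta_0$ for all $\tau \in \mathbb{R}$, where $y := g^{t'_k - t_k}(x)$. Expansivity of $x$ then forces $y = g^{t_0}(x)$ for some $|t_0| < \delta_0$; since $x$ is not periodic, the only possibility is $t_0 = t'_k - t_k$, contradicting $t'_k - t_k > \delta_0$.

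Whichever of $s_k^\pm$ is finite, continuity of the flow together with maximality gives $d(g^{s_k^\pm}(x_k), g^{s_k^\pm}(x'_k)) = \epsilon_0$. To conclude, I need $|s_k^\pm| \to \infty$ along whichever choice I make. This is where I use $d(x_k, x'_k) < 1/k \to 0$: by continuous dependence of $g^t$ on initial conditions, uniformly on any compact time interval, a bounded subsequence of $|s_k^\pm|$ would force $d(g^{s_k^\pm}(x_k), g^{s_k^\pm}(x'_k)) \to 0$, contradicting the value $\epsilon_0$. Finally, by a pigeonhole argument there is a subsequence along which either $s_k^+$ is always finite (yielding $s_k \to +\infty$) or $s_k^-$ is always finite (yielding $s_k \to -\infty$); relabeling this subsequence as $\{k\}$ gives the proposition. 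The main obstacle is the expansivity step — ruling out that the two orbit pieces stay $\epsilon_0$-close for all time — and it rests squarely on Theorem \ref{flat strip closed} (so that the non-periodic $x$ avoids flat strips) together with Lemma \ref{expansivity}; the remaining continuity and subsequence extraction are routine.
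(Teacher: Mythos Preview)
Your proof is correct and follows essentially the same approach as the paper: the key step is that $x$, being non-periodic, cannot be tangent to a flat strip (Theorem~\ref{flat strip closed}), hence enjoys the expansivity property (Lemma~\ref{expansivity}), which forces the orbits of $x_k$ and $x'_k$ to separate to distance $\epsilon_0$ in at least one time direction. The paper's own proof is a terse contradiction argument that jumps directly from ``assume the contrary'' to ``$x$ lacks expansivity''; your version is more careful in explicitly defining the escape times $s_k^\pm$, verifying that $|s_k^\pm|\to\infty$ via uniform continuity of the flow on compact time intervals, and extracting the one-sided subsequence by pigeonhole---details the paper leaves implicit.
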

\begin{proof}
Assume the contrary. Then $x$ does not have expansivity property. By Lemma \ref{expansivity}, $x$ is tangent to a flat strip. Then by Lemma \ref{flat strip closed}, $x$ must be periodic. This contradicts to the assumption $x\in (\text{Per\ }(g^t))^c$.
\end{proof}
\vspace{.1cm}

Without loss of generality, we suppose that $s_k \to +\infty$ in the remainder of the paper. For the case $s_k \to -\infty$, everything remains true by a slight modification.
\vspace{.1cm}

\begin{proposition}\label{prop2}
For an arbitrary small $\epsilon_0 >0$, there exist $a, b\in \Lambda \cap (\text{Per\ }(g^t))^c$ such that
\begin{equation}\label{1}
 d(a, b)=\epsilon_0,
\end{equation}
\begin{equation}\label{2}
   d(g^t(a), g^t(b))\leq \epsilon_0, \ \forall t<0,
\end{equation}
\begin{equation}\label{3}
a \notin \mathcal{O}(b),
\end{equation}
\begin{equation}\label{4}
a \in W^u(b).
\end{equation}

\end{proposition}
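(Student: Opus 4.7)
My plan is to set $a_k := g^{s_k}(x_k)$ and $b_k := g^{s_k}(x'_k)$ and use compactness of $SM$ to extract a subsequence along which $a_k \to a$ and $b_k \to b$. Property (1) is immediate from Proposition~\ref{prop} since $d(a_k,b_k) = \epsilon_0$ for every $k$. For (2), for any fixed $t \le 0$, once $s_k + t \ge 0$,
\[
d(g^t(a_k), g^t(b_k)) \;=\; d(g^{s_k+t}(x_k), g^{s_k+t}(x'_k)) \;\le\; \epsilon_0,
\]
so (2) follows on taking $k \to \infty$. The entire orbit $\mathcal{O}(x)$ lies in $\Lambda \cap (\mathrm{Per}(g^t))^c$, which is closed by Theorem~\ref{nonclosedset}, so the limits $a, b$ also lie in this set. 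For (4), I would take $\epsilon_0$ smaller than the injectivity radius of $M$ so that (2) lifts coherently to $\widetilde M$: with appropriate lifts, $d_{\widetilde M}(\widetilde\gamma_a(t), \widetilde\gamma_b(t)) \le \epsilon_0$ for every $t \le 0$, and this bounded-distance-at-$-\infty$ condition forces $\widetilde\gamma_a(-\infty) = \widetilde\gamma_b(-\infty)$, placing $a$ on the unstable leaf through $b$.

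The substantive content is (3), which I would prove by contradiction. Assume $a = g^{t_0}(b)$ for some $t_0 \ne 0$. Then (2) reads $d_M(\gamma_b(t+t_0), \gamma_b(t)) \le \epsilon_0$ for $t \le 0$, and lifting (using $\epsilon_0 <$ injectivity radius) produces a unique, hence $t$-constant, deck transformation $\phi$ with $d_{\widetilde M}(\widetilde\gamma_b(t+t_0), \phi\,\widetilde\gamma_b(t)) \le \epsilon_0$ for all $t \le 0$. If $\phi \ne \mathrm{id}$, then sending $t \to -\infty$ shows $\phi$ fixes $\widetilde\gamma_b(-\infty) \in \widetilde M(\infty)$; as a nontrivial axial isometry, its axis is the lift of a closed geodesic $\sigma$ on $M$, and $\widetilde\gamma_b$ is negatively asymptotic to this axis. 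Either the two geodesics bound a flat strip, in which case Theorem~\ref{flat strip closed} forces $\gamma_b$ to be closed, or by Lemma~\ref{boundary} applied in reversed time together with the time-reversed analogue of Lemma~\ref{flat closed geodesic}, $\sigma \in \alpha(b)$ and again $\gamma_b$ is closed, each contradicting $b \in (\mathrm{Per}(g^t))^c$.

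The main obstacle I anticipate is the residual case $\phi = \mathrm{id}$, where the above analysis yields only $|t_0| \le \epsilon_0$ and hence, combined with $d(a,b) = \epsilon_0$, the equality $|t_0| = \epsilon_0$, without an immediate contradiction. To eliminate this case I would exploit the quantitative feature $t'_k - t_k \to +\infty$ from Lemma~\ref{closelemma}: under the hypothetical identification $a = g^{t_0}(b)$ one obtains $d(g^{s_k+t_k}(x), g^{s_k+t'_k+t_0}(x)) \to 0$ with the time gap $t'_k + t_0 - t_k \to +\infty$, a long-time near-closing of the orbit of $x$ that should be reconciled with the minimality of $s_k$ in Proposition~\ref{prop} and the expansivity property of $x$ granted by Lemma~\ref{expansivity} (applicable since $x \in (\mathrm{Per}(g^t))^c$ is not tangent to a flat strip by Theorem~\ref{flat strip closed}), producing the required contradiction.
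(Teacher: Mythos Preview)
Your construction of $a,b$ and the verification of \eqref{1}, \eqref{2} match the paper, and invoking Theorem~\ref{nonclosedset} to place $a,b$ in $\Lambda\cap(\mathrm{Per}(g^t))^c$ is a clean alternative to the paper's use of Lemma~\ref{flat closed geodesic}. Your treatment of the case $\phi\neq\mathrm{id}$ in \eqref{3} is essentially the paper's second step.

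There are two genuine problems. First, for \eqref{4}: the condition $\widetilde\gamma_a(-\infty)=\widetilde\gamma_b(-\infty)$ only says $a$ lies on the \emph{weak} (center-) unstable set of $b$, not on the strong leaf $W^u(b)$. The paper fixes this by sliding along the orbits, replacing $a,b$ by suitable $a'\in\mathcal O(a)$, $b'\in\mathcal O(b)$ with $a'\in W^u(b')$, at the cost of changing $\epsilon_0$. You should do the same.

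Second, and more seriously, your handling of the case $\phi=\mathrm{id}$ in \eqref{3} is a gap. The observation that $d(g^{s_k+t_k}(x),g^{s_k+t'_k+t_0}(x))\to 0$ with diverging time-gap is not a contradiction: you already had $d(g^{t_k}(x),g^{t'_k}(x))<1/k$ with diverging gap from Lemma~\ref{closelemma}, so producing another such pair proves nothing. Expansivity of $x$ (Lemma~\ref{expansivity}) concerns two-sided shadowing by a \emph{different} orbit, not self-returns of $\mathcal O(x)$, and the ``minimality of $s_k$'' gives no leverage here since the new near-return occurs at a different base time. The paper rules out $\phi=\mathrm{id}$ (equivalently $\widetilde\gamma=\widetilde\gamma'$) by an elementary comparison in $\widetilde M$: with $z_k$ the foot of the perpendicular from $y_k=g^{s_k}(x_k)$ to $\widetilde\gamma'_k$, one sets $d_k=d(y_k,z_k)$, $l_k=d(y_k,x'_k)$, $b_k=d(x'_k,z_k)$, $b'_k=d(z_k,y'_k)$; if $d_k\to 0$ then $l_k-b_k\to 0$, while $|l_k-s_k|\le d(x_k,x'_k)\to 0$, forcing $b'_k=|s_k-b_k|\to 0$ and hence $\epsilon_0=d(y_k,y'_k)\le d_k+b'_k\to 0$, a contradiction. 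Thus $d(a,\widetilde\gamma')\ge d_0>0$ and the limiting lifts are distinct. You should replace your speculative closing argument with this computation.
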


\begin{proof}

We apply Proposition \ref{prop}. Pick a subsequence $k_i\to +\infty$ such that both of the sequences $\{g^{s_{k_i}}(x_{k_i})\}$ and $\{g^{s_{k_i}}(x'_{k_i})\}$ converge. Let
$$a:=\lim_{k_i\to +\infty}g^{s_{k_i}}(x_{k_i}),\ \text{\ and\ }\ b:=\lim_{k_i\to +\infty}g^{s_{k_i}}(x'_{k_i}).$$
Then $d(a, b)=\lim_{k_i\to +\infty}d(g^{s_{k_i}}(x_{k_i}),g^{s_{k_i}}(x'_{k_i}))=\epsilon_0.$ We get \eqref{1}.\\

For any $t<0$, since $0<s_{k_i}+t<s_{k_i}$ for large $k_i$, one has
$$d(g^t(a), g^t(b))=\lim_{k_i\to +\infty}(d(g^{s_{k_i}+t}(x_{k_i}),g^{s_{k_i}+t}(x'_{k_i}))) \leq \epsilon_0.$$
Hence we get \eqref{2}.\\

Next assume that $a$ is periodic. Since
$$\lim_{k_i\to +\infty} g^{t_{k_i}+s_{k_i}}(x)=\lim_{k_i\to +\infty}g^{s_{k_i}}(x_{k_i})=a,$$ then $x$ is periodic by Lemma \ref{flat closed geodesic}. This is a contradiction. So $a\in (\text{Per\ }(g^t))^c$. Similarly $b\in (\text{Per\ }(g^t))^c$. Thus $a, b\in \Lambda \cap (\text{Per\ }(g^t))^c$.\\

Now we prove \eqref{3}, i.e., $a \notin \mathcal{O}(b)$. For a simpler notation, we write
$$\lim_{k\to +\infty}g^{s_{k}}(x_{k})=a, \text{\ and\ }\lim_{k\to +\infty}g^{s_{k}}(x'_{k})=b.$$
The geodesics $\gamma_{x_k}(t), \gamma_{x'_k}(t)$ on $M$ can be lifted to $\widetilde{\gamma}_k, \widetilde{\gamma}'_k$ respectively on $\widetilde{M}$ in the way such that $d(x_k, x'_k)<\frac{1}{k}$,
$d(y_k,y'_k)=\epsilon_0$ where $y_k=g^{s_k}(x_k)$, $y'_k=g^{s_k}(x'_k)$, and moreover $y_k \to a$, $y'_k \to b$. Here we use a same notation for the lift of a point since no confusion is caused. Then $\widetilde{\gamma}_k$ converges to $\widetilde{\gamma}=\widetilde{\gamma}_a$, $\widetilde{\gamma}'_k$ converges to $\widetilde{\gamma}'=\widetilde{\gamma}_b$ and $d(a, b)= \epsilon_0$. See Figure 2 (we use a same notation for a vector and its footpoint).\\

\begin{figure}[h]
\centering
\setlength{\unitlength}{\textwidth}

\begin{tikzpicture}
\draw (0,0) circle (2);

\draw (0,-2) to [out=92,in=275] (-0.5,1.92)node[above] {$\widetilde{\gamma}_k$};
\draw (0.2,-1.98)to [out=91,in=268] (0.5, 1.92)node[above] {$\widetilde{\gamma}'_k$};

\coordinate [label=left:$x_k$] (x_k) at (-0.04,-1.5);
\coordinate [label=right:$x'_k$] (x'_k) at (0.2,-1.5);
\coordinate [label=left:$y_k$] (y_k) at (-0.39,1);
\coordinate [label=right:$y'_k$] (y'_k) at (0.45,1.2);
\coordinate [label=right:$z_k$] (z_k) at (0.45,.9);

\draw (y_k) to [out=25,in=175] node[above ] {$\epsilon_0$}(y'_k);
\draw (x'_k) to [out=100,in=290] (y_k);
\draw (y_k) to [out=-10,in=175] (z_k);
\draw [right angle symbol={y_k}{z_k}{x'_k} size=.01];

\fill (x_k) circle (1.5pt) (x'_k) circle (1.5pt) (y_k)  circle (1.5pt) (y'_k)  circle (1.5pt)
(z_k)  circle (1.5pt);

\end{tikzpicture}
\caption[]{Proof of $\widetilde{\gamma} \neq \widetilde{\gamma}'$}

\end{figure}

First we show that $d(y_k, \widetilde{\gamma}_k')$ is bounded away from $0$. Write $d_k:= d(y_k, \widetilde{\gamma}'_k)=d(y_k, z_k)$, $l_k:=d(y_k, x'_k)$, $b_k:= d(x'_k, z_k)$, and $b'_k:=d(z_k, y'_k)$. And we already know that $d(x'_k, y'_k)=s_k$. Suppose that $d_k \to 0$ as $k\to +\infty$. By the triangle inequality, $\lim_{k\to +\infty}(l_k-b_k)=0$. Since $\lim_{k\to +\infty}(l_k-s_k) \leq \lim_{k \to+\infty}d(x_k, x'_k)=0$, we have that $\lim_{k\to+\infty}b'_k=\lim_{k\to +\infty}|(l_k-b_k)-(l_k-s_k)|=0$. But the triangle inequality implies $\epsilon_0 \leq d_k+b_k' \to 0 $, which is a contradiction. Now $\widetilde{\gamma} \neq \widetilde{\gamma}'$ follows from $d(a, \widetilde{\gamma}')=\lim_{k\to +\infty}d(y_k, \widetilde{\gamma}'_k) \geq d_0$ for some $d_0 >0$.\\

Next we suppose there exists a deck transformation $\phi$ such that $\phi(\widetilde{\gamma})=\widetilde{\gamma}'$. See Figure 3. Observe that $\widetilde{\gamma}(-\infty)=\widetilde{\gamma}'(-\infty)$ since $d(g^t(a), g^t(b))\leq \epsilon_0$, $\forall t<0$. Let $\widetilde{\gamma}_0$ be the closed geodesic such that $\phi(\widetilde{\gamma}_0)=\widetilde{\gamma}_0$. Then $\widetilde{\gamma}(-\infty)=\widetilde{\gamma}_0(-\infty)$. By Lemma \ref{flat closed geodesic}, $\widetilde{\gamma}$ is a closed geodesic, i.e., $a$ is a periodic point. We arrive at a contradiction. Hence for any deck transformation $\phi$, $\phi(\widetilde{\gamma})\neq\widetilde{\gamma}'$. So $a \notin \mathcal{O}(b)$, and we get \eqref{3}.\\

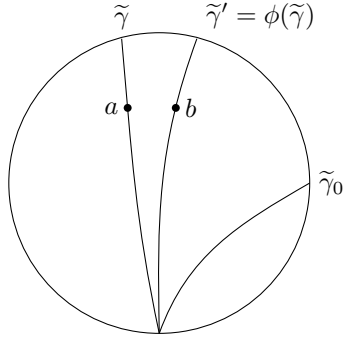
\begin{figure}[h] \label{figure3}
\centering
\setlength{\unitlength}{\textwidth}

\begin{tikzpicture}
\draw (0,0) circle (2);

\draw (0,-2) to [out=102,in=275] (-0.5,1.92)node[above] {$\widetilde{\gamma}$};
\draw (0,-2)to [out=91,in=250] (0.5, 1.92)node[above right] {$\widetilde{\gamma}'=\phi(\widetilde{\gamma})$};
\draw (0,-2) to [out=70,in=210] (2,0)node[right] {$\widetilde{\gamma}_0$};

\coordinate [label=left:$a$] (a) at (-0.42,1);
\coordinate [label=right:$b$] (b) at (0.22,1);
\fill (a) circle (1.5pt) (b) circle (1.5pt);
\end{tikzpicture}
\caption[]{Proof of $\phi(\widetilde{\gamma}) \neq \widetilde{\gamma}'$}
\end{figure}

At last, if $a \notin W^u(b)$, we can replace $a$ by some $a' \in \mathcal{O}(a)$, $b$ by some $b' \in \mathcal{O}(b)$ such that  $a'\in W^u(b')$ and the above three properties still hold for a different $\epsilon_0$. We get \eqref{4}.
\end{proof}
\vspace{.1cm}

\begin{proof}[Proof of Theorem \ref{B}]
We apply Proposition \ref{prop2}. Let $y=-a$, $z=-b$. Then $y, z \in \Lambda \cap (\text{Per\ }(g^t))^c$, $d(g^t(y), g^t(z))\leq \epsilon_0,\ \forall t>0$, $z \notin \mathcal{O}(y)$ and $y \in W^s(z)$.\\

If $\epsilon_0$ is small enough, we can lift geodesics $\gamma_y(t)$ and $\gamma_z(t)$ to $\widetilde{\gamma}_y(t)$ and $\widetilde{\gamma}_z(t)$ respectively on $\widetilde{M}$, such that $d(\widetilde{\gamma}_y(t),\widetilde{\gamma}_z(t)) \leq \epsilon_0$ for any $t >0$ and $y\in \widetilde{W}^s(z)$. Suppose $\lim_{t\to +\infty} d(\widetilde{\gamma}_y(t), \widetilde{\gamma}_z(t))=\delta >0$. Then by Lemma \ref{boundary}, $\widetilde{\gamma}_y(t)$ and $\widetilde{\gamma}_z(t)$ converge to the boundary of a flat strip. Hence $y$ and $z$ are periodic by Lemma \ref{flat closed geodesic}. A contradiction. So we have $\lim_{t\to +\infty} d(\widetilde{\gamma}_y(t), \widetilde{\gamma}_z(t))=0$. Hence $$d(g^t(y), g^t(z))\to 0,  \ \ \text{as\ } t\to +\infty.$$
\end{proof}
\vspace{.1cm}

\subsection{Proof of Theorem \ref{A}}
In the proof of Theorem \ref{A}, an argument similar to the one in Proposition \ref{prop2} is used.
\vspace{.1cm}

\begin{proof}[Proof of Theorem \ref{A}]
Suppose that $\Lambda \subset \text{Per\ }(g^t)$. We will prove that if $x\in \Lambda$, then $x$ is tangent to an isolated closed flat geodesic or to a flat strip.\\

Assume the contrary to Theorem \ref{A}. Then there exists a sequence of different vectors $x'_k \in \Lambda$ such that $\lim_{k\to +\infty}x'_k=x$ for some $x\in \Lambda$. Here different $x'_k$ are tangent to different isolated closed geodesics or to different flat strips, and $x$ is tangent to an isolated closed geodesic or to a flat strip. For large enough $k$, we suppose that $d(x'_k, x) <\frac{1}{k}$. Fix a small number $\epsilon_0 >0$. It is impossible that $d(g^t(x'_k), g^t(x)) \leq \epsilon_0$, $\forall t>0$. For otherwise, $\widetilde{\gamma}_{x'_k}(t)$ and $\widetilde{\gamma}_x(t)$ are positively asymptotic closed geodesics. They must be tangent to a common flat strip by Lemma \ref{boundary} and Lemma \ref{flat closed geodesic0}. This is impossible since different $x'_k$ are tangent to different isolated closed geodesics or to different flat strips. Hence there exists a sequence $s_k\to +\infty$ such that
$$d(g^{s_k}(x'_k), g^{s_k}(x))=\epsilon_0,$$
and
$$d(g^{s}(x'_k), g^{s}(x))\leq \epsilon_0, \ \forall 0\leq s <s_k.$$
\vspace{.1cm}

Let $y_k:=g^{s_k}(x)$ and $y'_k:=g^{s_k}(x'_k)$. Without loss of generality, we suppose that $y_k\to a$ and $y'_k\to b$. A similar proof as in Proposition \ref{prop2} shows that $d(a, b)=\epsilon_0$ and $d(g^t(a), g^t(b)) \leq \epsilon_0$,  $\forall t\leq 0$. Replacing $x, x_k'$ by $-x, -x_k'$ respectively and applying the same argument, we can obtain two points $a^-,b^-$ such that $d(a^-, b^-)=\epsilon_0$ and $d(g^t(a^-), g^t(b^-)) \leq \epsilon_0$, $\forall t\leq 0$. Then we have the following three cases:
 \begin{enumerate}
 \item $\lim_{t\to \infty}d(g^t(-a), g^t(-b))=0$. By Lemma \ref{flat closed geodesic0}, $-a=-b$. This contradicts to $d(a,b)=\epsilon_0$.
 \item $\lim_{t\to \infty}d(g^t(-a^-), g^t(-b^-))=0$. Also by Lemma \ref{flat closed geodesic0}, $-a^-=-b^-$. This contradicts to $d(a^-,b^-)=\epsilon_0$.
 \item $\lim_{t\to \infty}d(g^t(-a), g^t(-b))>0$ and $\lim_{t\to \infty}d(g^t(-a^-), g^t(-b^-))>0$.
\end{enumerate}
\vspace{.2cm}

For case (3), by Lemmas \ref{boundary} and \ref{flat closed geodesic}, $\gamma_{a}$ and $\gamma_{x}$ coincide. And moreover, $\gamma_{x}$ and $\gamma_b$ are boundaries of a flat strip of width $\delta_1>0$. Similarly, $\gamma_{x}$ and $\gamma_{b^-}$ are boundaries of a flat strip of width $\delta_2>0$. We claim that these two flat strips lie on the different sides of $\gamma_x$. Indeed, we choose $\epsilon_0$ small enough and consider the $\epsilon_0$-neighborhood of the closed geodesic $\gamma_x$ which contains two regions lying on the different sides of $\gamma_x$. By the definition of $b$ and $b^-$, they must lie in different regions as above. This implies the claim. \\

In this way we get a flat strip of width $\delta_1+\delta_2$ and $x$ is tangent to the interior of this flat strip. Since $g^{s_k}(x'_k)\to b$, we can repeat all the arguments above to $b, g^{s_k}(x'_k)$ instead of $x, x_k'$. Then either we are arriving at a contradiction as in case (1) or case (2) and we are done, or we get a flat strip of width greater than $\delta_1+\delta_2$ and $b$ is tangent to the interior of the flat strip. But we can not enlarge a flat strip repeatedly in this way on a compact surface $M$. So we are done with the proof.

\end{proof}
\vspace{.3cm}

\subsection{Proof of Theorem \ref{Main}}
We shall prove Theorem \ref{Main} by showing that the second one of the dichotomy cannot happen, if $\{p\in M: K(p)<0\}$ has at most finitely many connected components. The proof is an adaption of the one of Theorem 1.6 in \cite{Wu} to surfaces without focal points. Moreover, we fix a gap in that proof, which is pointed out by Keith Burns, and the argument here is also due to him. Corollary \ref{selfintcor} is used in the argument.
\vspace{.1cm}

\begin{proof}[Proof of Theorem \ref{Main}]
Suppose $\Lambda \cap (\text{Per\ }(g^t))^c \neq \emptyset$. Consider the two points $y$ and $z$ given by Theorem \ref{B}. We lift the geodesics $\gamma_y(t)$ and $\gamma_z(t)$ to the geodesics in the universal cover $\widetilde{M}$, which are denoted by $\widetilde{\gamma}_1$ and $\widetilde{\gamma}_2$ respectively.\\

Consider the connected components of $\{p\in \widetilde{M}: K(p)<0\}$ on $\widetilde{M}$ and we want to see how they distribute inside the ideal triangle bounded by $\widetilde{\gamma}_1$ and $\widetilde{\gamma}_2$. Since $\widetilde{\gamma}_1$ and $\widetilde{\gamma}_2$ are flat geodesics, no connected component intersects $\widetilde{\gamma}_1$ or $\widetilde{\gamma}_2$. We also claim that the radii of inscribed disks inside these connected components are bounded away from $0$. Indeed, if this is not true, then there exists an isometry between the inscribed disk with very small radius inside a connected component, and an inscribed disk inside a connected component of $\{p\in M: K(p)<0\}$ on the base space $M$. This is impossible because the number of the connected components of $\{p\in M: K(p)<0\}$ is finite, and therefore the radii of their inscribed disks are bounded away from $0$. The claim follows.\\

According to Corollary \ref{selfintcor}, there exists a sequence of times $t_i \to \infty$ and isometries $\phi_i$ of $\widetilde{M}$, such that  $\phi_i(\widetilde\gamma_1)$ crosses $\widetilde\gamma_1$ at the point $\widetilde\gamma_1(t_i)$ and at an angle greater than a small enough constant $\alpha>0$. Recall that $d(\widetilde{\gamma}_1(t), \widetilde{\gamma}_2(t))\to 0$ as $t \to +\infty$. Hence $\phi_i(\widetilde\gamma_1)$ also crosses $\widetilde\gamma_2$ near the point $\widetilde\gamma_1(t_i)$ for $i$ large enough. Let $D_i$ be the region in $\widetilde M$ enclosed by $\widetilde\gamma_1$, $\widetilde\gamma_2$, $\phi_i(\widetilde\gamma_1)$ and $\phi_{i+1}(\widetilde\gamma_1)$. Every connected component of $\{p\in \widetilde{M}: K(p)<0\}$ must lie inside at most one single $D_i$. Since $d(\widetilde{\gamma}_1(t), \widetilde{\gamma}_2(t))\to 0$ as $t \to +\infty$, we know that the radius of the inscribed disk inside $D_i$ goes to zero as $i\to \infty$. Combining with the above claim, it is clear that the connected components of $\{p\in \widetilde{M}: K(p)<0\}$ cannot approach $w$ inside the ideal triangle. See Figure 4.

\begin{figure}[h]

\centering
\setlength{\unitlength}{\textwidth}

\begin{tikzpicture}
\draw (0,0) circle (2);

\coordinate [label=left:$y_{t_0}$] (y_{t_0}) at (-.42,0);
\coordinate [label=right:$z_{t_0}$] (z_{t_0}) at (.37,-.3);
\coordinate [label=above right:$w$] (w) at (1,1.73);
\fill (z_{t_0}) circle (1.5pt) (y_{t_0}) circle (1.5pt) (w) circle (1.5pt);

\draw (-1.73,-1) node[below left] {$\widetilde{\gamma}_1$} to [out=35,in=240] (1,1.73);
\draw (0,-2) node[below] {$\widetilde{\gamma}_2$} to [out=80,in=250] (1,1.73);
\draw (y_{t_0}) to [out=340,in=170] (z_{t_0});

\draw (-.7,-1.4) circle (0.3);
\draw (-.4,-.93) circle (.18);
\draw (-.2,-.6) circle (.12);
\draw (-.01,-.35) circle (.08);

\end{tikzpicture}
\caption[]{Proof of Theorem \ref{Main}}

\end{figure}
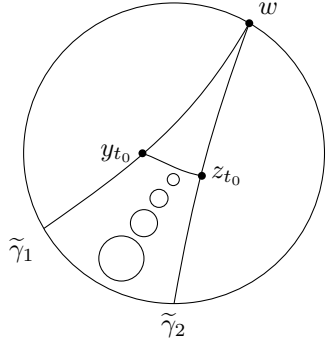

So there exists a $t_0 >0$ with $y_{t_0}:=g^{t_0}(y)$ and $z_{t_0}:=g^{t_0}(z)$,  such that the infinite triangle $z_{t_0}y_{t_0}w$ is a flat region. Then $d(g^t(y), g^t(z))\equiv d(y_{t_0}, z_{t_0})$ for all $t \geq t_0$. Indeed, if we construct a geodesic variation between $\widetilde{\gamma}_1$ and $\widetilde{\gamma}_2$, then the Jacobi fields are constant for $t\geq t_0$ since $K\equiv 0$. Thus $d(\widetilde{\gamma}_1(t), \widetilde{\gamma}_2(t))$ is constant when $t \geq t_0$. We get a contradiction since $d(g^t(y), g^t(z))\to 0$ as $t \to +\infty$ by Theorem \ref{B}.\\

Finally we can conclude that $\Lambda \subset \text{Per\ }(g^t)$. In particular the geodesic flow is ergodic by Theorem \ref{A} and Pesin Theorem (Theorem \ref{pesin}).
\end{proof}
\ \
\\[-2mm]
\textbf{Acknowledgement.}
We would like to thank Keith Burns for his valuable suggestions. Some part of this paper greatly benefits from a fruitful discussion with him.

The first author would also like to thank Federico Rodriguez Hertz for his help and valuable comments. The second author is partially supported by NSFC under Grant No.11301305. The third author is partially supported by  the State Scholarship Fund from China Scholarship Council (CSC).

\end{document}